\newtheorem{theo}{Theorem}[section]
\newtheorem{prop}{Proposition}[section]
\newtheorem{lemma}{Lemma}[section]
\newtheorem{cor}{Corollary}[section]
\newtheorem{obs}{Observation}
\theoremstyle{definition}
\newtheorem{df}{Definition}
\newtheorem{ex}{Example}
\theoremstyle{remark}
\newtheorem{rem}{Remark}
\renewcommand{\(}{\left(}
\renewcommand{\)}{\right)} 
\newcommand{\beq}{\begin{equation}} 
\newcommand{\eeq}{\end{equation}} 
\newcommand{\bal}{\begin{align}} 
\newcommand{\eal}{\end{align}} 
\newcommand{\bals}{\begin{align*}} 
\newcommand{\eals}{\end{align*}} 
\newcommand{\bth}{\begin{theo}} 
\newcommand{\bl}{\begin{lemma}} 
\newcommand{\el}{\end{lemma}} 
\newcommand{\bp}{\begin{prop}} 
\newcommand{\ep}{\end{prop}} 
\newcommand{\bdf}{\begin{df}} 
\newcommand{\edf}{\end{df}} 
\newcommand{\brem}{\begin{rem}} 
\newcommand{\erem}{\end{rem}} 
\newcommand{\bnrem}{\begin{nrem}} 
\newcommand{\enrem}{\end{nrem}} 
\newcommand{\bex}{\begin{ex}} 
\newcommand{\eex}{\end{ex}} 
\newcommand{\bcor}{\begin{cor}} 
\newcommand{\ecor}{\end{cor}} 
\newcommand{\bpf}{\begin{proof}} 
\newcommand{\epf}{\end{proof}}
\def\({\left(} 
\def\){\right)}
\newcommand{\R}{\mathbb{R}}
\newcommand{\E}{\mathbb{E}}
\newcommand{\PP}{\mathbb{P}}
\newcommand{\V}{{\mathbb{V}\mbox{ar}}} 
\newcommand{\pt}[1]{\textbf{#1}}
\newcommand{\map}[1]{\textbf{#1}}
\numberwithin{equation}{section}
\title{Asymptotic normality of pattern occurrences in random maps}
\author{Michael Drmota$^\ast$, Eva-Maria Hainzl$^*$, Nick Wormald$^+$} 
\thanks{$^\ast$ TU Wien, Institute of Discrete Mathematics and Geometry,
Wiedner Hauptstrasse 8-10, A-1040 Vienna, Austria. michael.drmota@tuwien.ac.at, eva-maria.hainzl@tuwien.ac.at. Research 
supported by the Austrian  Science Fund FWF, projects P 35016 and F 100203.}
\thanks{$^+$ School of Mathematics, Monash University,
VIC 3800,
Australia. nick.wormald@monash.edu. Research supported by the Australian Laureate Fellowships grant FL120100125.}
\begin{document}

\begin{abstract}
The purpose of this paper is to study the limiting distribution of special {\it additive functionals} on random planar maps, namely the number of occurrences of a given {\it pattern}. The main result is a central limit theorem for these pattern counts in the case of pattern with a simple boundary. The proof relies on a combination of analytic and combinatorial methods together with a moment method due to Gao and Wormald~\cite{GaoWormald}. It is an important issue to handle the overlap structure of two pattern which is the main difficulty in the proof. 
\end{abstract}
\maketitle 

\section{Introduction}\label{sec:intro}

It is a classic problem in the theory of random graphs to study the appearance of substructures, like triangles, and their distribution as the size of the graph tends to $\infty$. In contrast, the study of submap occurrences in random planar maps is still active and featuring several open problems. 
It started in 1985 when Richmond, Robinson and Wormald studied 3-connected subtriangulations of 3-connected triangulations \cite{RichmondRobinsonWormald} and continued in \cite{RichmondWormald} and \cite{BenderGaoRichmond} where general 0-1 laws were proven. About 10 years later Gao and Wormald showed that there is a sharp concentration of the number of occurrences of given submaps in random planar triangulations \cite{GaoWormald2} and they also published an article on asymptotic normality determined by high moments of general interest \cite{GaoWormald}. In this, they proved a central limit theorem for patterns which cannot self intersect in random triangulations and other families of maps as an example but the main theorem in this work has proved useful in other contexts as well (see e.g. \cite{BliemKous}, \cite{GaoWang} and most recently \cite{ThevJan}).  In \cite{DrmotaStufler}, Drmota and Stufler proved that the expectation for the number of pattern occurrences in random planar maps adjusted with a (regular critical) Boltzmann distribution is linear. They also stated it as an open problem (Problem 3.2) whether these pattern counts satisfy (generally) a central limit theorem. Later Drmota, Noy and Yu proved a central limit theorem for face counts for faces with a simple boundary, which they called pure polygons in \cite{DrmotaNoyYu}. Again the result in this article was a mere example of an application of a more powerful structural result which we will exploit in this work as well.

The main difficulty in extending the known cases to all patterns with simple boundary is that the usual generating function approach for counting maps with a given number of occurrences of a given pattern breaks down seriously when the patterns can overlap: a new variable would be needed for each type of cluster of overlapping patterns, and there are an unbounded number of such overlapping cases. We avoid this by using a technique involving factorial moments, which boils down to counting maps with a given number of marked patterns, in clusters involving no more than 2 patterns each. Thus, these   1- and 2-pattern clusters cannot overlap and the required enumeration can be carried out using the ``usual'' methods.

The next section provides an introduction to planar maps, their enumeration with generating functions by the classic decomposition scheme by Tutte \cite{tutte_1963} and relevant theorems concerning asymptotic normality in the limit. In Section \ref{sec:koalas} we prove a central limit theorem for a specific pattern which we call \emph{koala}. While it is just an example in light of our main result, it captures very well the details we will need to consider for general patterns. Finally, Section \ref{sec:main} is dedicated to our main result which is a central limit theorem for the number of occurrences of a pattern with simple boundary in a random planar rooted map. Its proofs are collected in Section \ref{sec:proofs} and the last section gives a short overview of possible extensions of our results in various directions. 

\section{Preliminaries and more}\label{sec:asymp_normal}

In this section we give a short introduction to planar maps and some techniques to achieve asymptotic normality. We relate these topics to each other and prove several preliminary lemmas.
\subsection{Maps and Patterns}
We start this subsection by giving exact definitions of maps and patterns.
\begin{df}[Rooted planar maps]
    A \emph{planar map} is a connected planar graph (with loops and multiple edges allowed) embedded onto the sphere. If one of its edges is oriented, we call it \emph{rooted}. The oriented edge is further called the \emph{root edge} and the vertex from which the root edge is pointing away the \emph{root vertex}.
    A planar map separates the surface into several connected regions called \emph{faces}. The face to the left of the root edge is called the \emph{root face} or the \emph{exterior face}.
    The \emph{valency} of a face is the number of edges incident to it, bridges being counted twice. A face of valency $m$ is called an \emph{$m$-gon}.
    
    We define the \emph{boundary} of a rooted map as the set of all edges and vertices incident with the root face.
\end{df}

We count rooted maps up to root-preserving isomorphism according to their number of edges. There are several ways to do so (see for example \cite{Schaeffer} for an introduction on map enumeration). We will quickly review the classic decomposition scheme by Tutte \cite{tutte_1963}. The idea is to delete the root edge. Then  the map either disconnects into two independent maps or not. The root edge of the resulting map is defined so as to preserve the root face and root vertex (see Figure \ref{fig:map_decom}). In case the map disconnects, both components are assigned root edges, where in one component the root edge preserves the root vertex and the root face and in the other component the root vertex is the vertex where the deleted root edge pointed to.

In the case that the map does not disconnect after deletion of the root edge there are several maps which  would reduce to the same map after deletion of the root edge since the deleted root edge could have pointed to any vertex on the boundary of the obtained map. Therefore, when we translate the decomposition in Figure \ref{fig:map_decom}, we keep track of the root face valency and obtain the following functional equations for the generating functions $m_i(z), i\geq 0$ which enumerate maps with root face valency $i$ and where $z$ marks the number of edges :
\[
    m_0(z) = 1, \quad m_i(z) = z\sum_{k=0}^{i-2}m_k(z)m_{i-k-2}(z) + z\sum_{k\geq i-1} m_k(z).
\]
Subsequently, we introduce a catalytic variable $u$ which marks the root face valency and a bivariate generating function $M(z,u) = \sum_{i\geq 0}m_i(z)u^i$ and reduce the equation system above to a single \emph{discrete differential equation (DDE)}
\begin{equation}\label{eq:maps}
     M(z,u) = 1 + zu^2M(z,u)^2 + zu \frac{uM(z,u)-M(z,1)}{u-1}.
\end{equation}

\begin{rem}\label{rem:root_edge}
    The root edge decomposition implicitly assigns a canonical root edge and root vertex to each connected submap \map{s} of the map \map{m}. We can determine it recursively by repeating the following steps:
    \begin{itemize}
        \item If the root edge of \map{m} is not contained in \map{s}, then delete it. In case \map{m} disconnects, then additionally delete the component which does not contain \map{s}. The new root edge is the root edge defined by the decomposition process above. Repeat until \map{s} contains the root edge.
        \item If the root edge of \map{m} is contained in \map{s}, then the root edge and the root vertex are the canonical root edge/vertex of \map{s}. 
    \end{itemize}
\end{rem}
\begin{figure}
    \centering
    \includegraphics[width=0.65\textwidth]{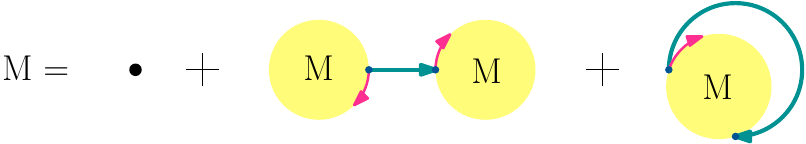}
    \caption{Planar map decomposition via root edge deletion}
    \label{fig:map_decom}
\end{figure}
This observation will be useful in the following sections.

When it comes to the definition of a patterns and submaps, there does not seem to be any consensus in the literature. Bender, Gao and Richmond~\cite{BenderGaoRichmond} define a submap occurrence essentially as follows.

\begin{df}[Submap occurrences]\label{def:submaps}
    Let $C$ be a cycle formed from some edges and their endpoints in a map $\map{m}$ on a surface $\map{S}$. (Recall that loops and multiple edges are permitted, so the cycle may have any length greater than 0.) Suppose that cutting along $C$ divides $\map{S}$ into two pieces. Duplicate $C$ so that each piece has a hole bounded by a copy of $C$ and then fill each of these holes with discs. This gives two surfaces $\map{S}_1$ and $\map{S}_1$ containing maps $\map{m}_1$ and $\map{m}_2$ with distinguished face formed by the added disc and therefore, distinguished boundary which are the edges of the cycle $C$.
    We say $\map{m}_1$ and $\map{m}_2$ are submaps in $\map{m}$ and the faces encircled by $C$ in $\map{m}$ are submap occurrences of $\map{m}_1$ and $\map{m}_2$ respectively.
\end{df}

 More recently, Yu~\cite{Yu} defined submaps as connected subsets of edges in a planar map and pattern occurrences in maps as follows.
\begin{df}[Pattern occurrences]\label{def:patterns}
    Let~\map{p} be a rooted map. We say that~\map{p} occurs as a pattern in a
    map~\map{m} if~\map{m} can be obtained by extending~\map{p} in the following way:
    \begin{enumerate}
        \item[(a)] adding vertices to the interior of the root face of~\map{p},
        \item[(b)] adding edges with their endpoints being either vertices from the
        boundary of~\map{p} or newly created vertices,
        \item[(c)] rerooting the map so obtained in such a way that its new root face is not contained in an interior face of~\map{p}.
    \end{enumerate}
\end{df}

In each case, an {\em occurrence} of a map $\map{p}$ in a map $\map{m}$ is equivalent to three incidence-preserving  injections from the vertices,  edges, and interior faces of $\map{p}$ to   the vertices,  edges, and interior faces of $\map{m}$ respectively. The occurrence is identified with the image of these bijections. However, any map, even a single vertex, can occur as a pattern in a map $\map{m}$, while only maps with a boundary consisting of a cycle may occur as a submap according to Definition~\ref{def:submaps}. We call this type of boundary a \emph{simple boundary}.
Pattern occurrences with simple boundary can be derived by rooting a submap so that its exterior face contains the root face of the larger map. In particular, we count pattern occurrences up to root-face preserving isomorphisms since the number of rooted pattern occurrences is then just a constant multiple of the number of (unrooted) pattern occurrences. 
In terms of submap occurrences, the number of occurrences of a pattern with simple boundary equals the number of submap occurrences which do not contain the root face.

\begin{df}[$S_i$-maps]
    The boundary of a map is called \emph{simple} if it contains as many edges as vertices.

    We will refer to maps with simple boundary and root face valency $i$ as $S_i$-maps and define their generating function as
    \[
        S_i(z) = \sum_{n\geq 0} s_{n,i}z^n
    \]
     where $s_{n,i}$ is the number of planar maps on $n$ edges with simple boundary and root face valency $i$.
\end{df}

We can decompose rooted maps in an alternative way using $S_i$-maps.  Each non-cut edge lies on a unique cycle of the boundary. Each vertex of such a cycle containing the root edge may be a cut vertex at which a (possibly empty) map is dangling off (see Figure \ref{fig:simple_dec}). In the case that the root edge is a cut edge, both of its vertices may be such cut vertices. The functional equation for an $S_i$-map can thus be obtained by rearranging the equation for a rooted map with root face valency~$i$.

Note that if we add a vector of variables $\pt{x}$ counting any set of non-root faces of valency $k$, possibly with restrictions on the shape of their boundary, or patterns with simple boundary, the decomposition will not be affected and we will not gain any further terms. Therefore, the following equation system holds for generating functions counting $S_i$-maps with additional variables counting either faces of valency $k$, possibly with restrictions on the shape of their boundary (see in particular Definition~8) or pattern occurrences with simple boundary .
\begin{figure}
    \centering
    \includegraphics[width=0.3\textwidth]{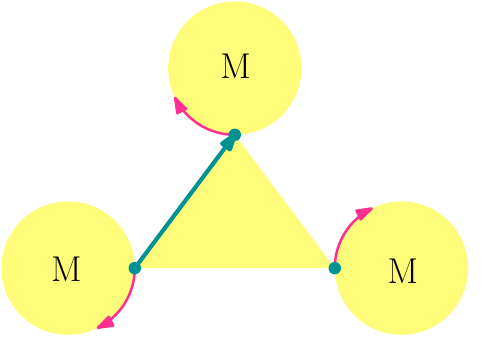}
    \caption{Decomposition of simple maps}
    \label{fig:simple_dec}
\end{figure}
\begin{lemma}\label{lem:simple}
    The generating functions $S_\ell(z,\pt{x})$ of maps with simple boundary and root face valency~$\ell$ with additional variables $\pt{x} = (x_1,x_2,\dots x_j)$ counting either patterns with simple boundary or faces of valency $k$ with restrictions to the shape of their boundary satisfy the following equation system
    \begin{align*}
        S_1(z,\pt{x}) &= M_1(z,\pt{x})\\
        S_2(z,\pt{x}) &= M_2(z,\pt{x})-M_1(z,u)^2-z\\
        S_\ell(z,\pt{x}) &= M_\ell(z,\pt{x}) - \sum_{k=1}^{\ell-1}S_k(z,\pt{x})[u^{\ell-k}]M(z,u,\pt{x})^k - z[u^{\ell-2}]M(z,u,\pt{x})^2, \quad \ell \geq 1
    \end{align*}
    where $M_i(z,\pt{x})$ denotes the generating function for maps with root face valency $i$ and $M(z,u,\pt{x})$ the bivariate generating function for rooted maps, where $u$ marks the root face valency and $z$ marks the number of edges.
\end{lemma}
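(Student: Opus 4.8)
The plan is not to compute $S_\ell$ directly, but rather to express the generating function $M_\ell(z,\pt{x})$ of \emph{all} rooted maps of root face valency $\ell$ in terms of the $S_k(z,\pt{x})$, and then to solve for $S_\ell$. The decomposition underlying this is the one sketched before the statement and in Figure~\ref{fig:simple_dec}: I would classify a rooted map $\map{m}$ of root face valency $\ell$ according to the position of its root edge on the boundary walk of the root face, which is either a cut edge or lies on a cycle of the boundary.

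First, suppose the root edge is a cut edge (a bridge). Then, following the Tutte scheme and Remark~\ref{rem:root_edge}, deleting it splits $\map{m}$ into an ordered pair of canonically rooted maps of root face valencies $a$ and $b$; since a bridge is counted twice on the single face it bounds, we have $\ell = a+b+2$. As the marking variables $\pt{x}$ count only faces and patterns with simple boundary, none of which can straddle the bridge, they are inherited intact by the two pieces, so this case contributes exactly $z\,[u^{\ell-2}]M(z,u,\pt{x})^2$. Otherwise the root edge is a non-cut edge and hence lies on a unique cycle $C$ of the boundary, say of length $k$ with $1\le k\le\ell$. The submap consisting of $C$ together with everything enclosed by $C$ on the side away from the root face is an $S_k$-map, while at each of the $k$ vertices of $C$ (canonically ordered, since $C$ is rooted) a possibly empty rooted map dangles into the root face. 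If these dangling maps have root face valencies $a_1,\dots,a_k$, then traversing the boundary of $\map{m}$ picks up $k$ edge-sides from $C$ and $a_i$ edge-sides from the $i$-th dangling map, so $\ell = k + a_1 + \dots + a_k$. Summing over all such compositions gives the contribution $S_k(z,\pt{x})\,[u^{\ell-k}]M(z,u,\pt{x})^k$ for each $k$, and the term $k=\ell$ forces all dangling maps to be empty and equals $S_\ell(z,\pt{x})$.

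Collecting the two cases and separating off the $k=\ell$ term yields
\begin{align*}
 M_\ell(z,\pt{x}) = S_\ell(z,\pt{x}) &+ \sum_{k=1}^{\ell-1}S_k(z,\pt{x})\,[u^{\ell-k}]M(z,u,\pt{x})^k \\
 &+ z\,[u^{\ell-2}]M(z,u,\pt{x})^2,
\end{align*}
and solving for $S_\ell$ gives the claimed recursion for all $\ell\ge 1$; the separately listed formulas for $S_1$ and $S_2$ are the special cases $\ell=1,2$, in which the sum is empty or a single term and $[u^{\ell-2}]M^2$ evaluates to $0$ or $1$ respectively.

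The main obstacle I expect is verifying that this case analysis is genuinely a bijection rather than a heuristic picture. Concretely, I would need to check (i) that a non-cut root edge lies on exactly one boundary cycle, so that $C$ and its associated $S_k$-map are well defined and $\map{m}$ reconstructs uniquely from the pair (the $C$-core, the tuple of dangling maps); (ii) that the canonical rooting of each dangling map supplied by Remark~\ref{rem:root_edge} makes the factor $M(z,u,\pt{x})^k$ count \emph{ordered} $k$-tuples of rooted maps without overcounting; and (iii) that the face-valency bookkeeping $\ell = k + \sum_i a_i$, respectively $\ell = a+b+2$ with the bridge counted twice, is exact. Finally, the observation made before the statement — that $\pt{x}$ marks only faces and patterns with simple boundary, each of which lies entirely within one piece of the decomposition — is precisely what guarantees that the recursion holds verbatim in the presence of $\pt{x}$, which is the feature that makes the lemma usable for the later central limit theorems.
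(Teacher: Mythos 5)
Your proposal is correct and follows essentially the same route as the paper, which states the lemma after informally describing precisely this decomposition: classify the root edge as a cut edge (contributing $z\,[u^{\ell-2}]M(z,u,\pt{x})^2$) or as lying on a unique boundary cycle carrying an $S_k$-map with dangling maps (contributing $S_k(z,\pt{x})\,[u^{\ell-k}]M(z,u,\pt{x})^k$), and then rearrange the resulting equation for $M_\ell(z,\pt{x})$ to isolate $S_\ell(z,\pt{x})$. Your additional checks — uniqueness of the boundary cycle, canonical rooting of the dangling maps via Remark~\ref{rem:root_edge}, the valency bookkeeping, and the observation that the variables $\pt{x}$ pass through the decomposition untouched — are exactly the points the paper relies on implicitly, so your write-up is a faithful (indeed more detailed) version of the paper's argument.
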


\begin{obs}The equation system for $(S_\ell(z,\pt{x}))_{\ell \geq 1}$ is iterative and $[u^{\ell-k}]M(z,u,\pt{x})$ is a polynomial in $(M_i(z,\pt{x}))_{0\leq i \leq \ell}$. Therefore, we can express all $S_\ell(z,\pt{x})$ as polynomials in $(M_i(z,\pt{x}))_{0\leq i \leq \ell}$ as well.
\end{obs}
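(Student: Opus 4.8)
The plan is to prove the claim by strong induction on $\ell$, exploiting the fact that the system of Lemma~\ref{lem:simple} is \emph{triangular} (this is what ``iterative'' means here): the right-hand side of the equation for $S_\ell$ involves only the lower-index functions $S_1,\dots,S_{\ell-1}$ together with $M_\ell$ and certain coefficient extractions from powers of $M(z,u,\pt{x})$. The base case is $S_1(z,\pt{x})=M_1(z,\pt{x})$, which is already a polynomial (indeed a monomial) in $(M_i)_{0\le i\le 1}$; note also that the general formula with $\ell=1$ has an empty sum and a vanishing extraction $[u^{-1}]M^2=0$, so it is consistent with this base.

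The one computation that needs to be made explicit is the coefficient extraction, which is the second assertion of the Observation. Writing $M(z,u,\pt{x})=\sum_{i\ge0}M_i(z,\pt{x})u^i$, I would expand the $k$-th power and read off
\[
[u^{\ell-k}]M(z,u,\pt{x})^k = \sum_{i_1+\cdots+i_k=\ell-k} M_{i_1}(z,\pt{x})\cdots M_{i_k}(z,\pt{x}),
\]
a finite sum over compositions of $\ell-k$ into $k$ nonnegative parts. Since each part satisfies $i_j\le \ell-k\le \ell-1$ whenever $k\ge1$, every factor carries an index at most $\ell-1$, so this extraction is a polynomial in $(M_i)_{0\le i\le \ell-1}$ with integer coefficients; the analogous statement holds for $z[u^{\ell-2}]M(z,u,\pt{x})^2$, whose factors have index at most $\ell-2$. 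This is the step that pins down exactly which $M_i$ are allowed to appear.

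With this in hand the induction closes immediately. Assuming $S_k$ is a polynomial in $(M_i)_{0\le i\le k}$ for every $k<\ell$, each product $S_k\,[u^{\ell-k}]M^k$ in the sum is a polynomial in $(M_i)_{0\le i\le \ell-1}$, and $z[u^{\ell-2}]M^2$ is likewise. Adding the lone term $M_\ell$ from the right-hand side, the recursion of Lemma~\ref{lem:simple} exhibits $S_\ell$ as $M_\ell$ minus a polynomial in $(M_i)_{0\le i\le \ell-1}$, hence as a polynomial in $(M_i)_{0\le i\le \ell}$, with coefficients in $\Z[z]$ (the variable $z$ entering only as the explicit prefactor of the last term and, for $\ell=2$, through the constant $-z$). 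In fact this yields slightly more than claimed: the top function $M_\ell$ occurs only linearly, and $S_\ell-M_\ell$ lies in $\Z[z][M_0,\dots,M_{\ell-1}]$.

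I do not anticipate any real obstacle: the content is entirely the triangular shape of the system, already guaranteed by Lemma~\ref{lem:simple}, combined with the elementary index bookkeeping in the coefficient extraction. The only points deserving a line of care are (i) checking that no $M_i$ with $i>\ell$ can enter through the powers $M^k$, which the constraint $i_1+\cdots+i_k=\ell-k$ rules out, and (ii) recording that $z$ is to be treated as part of the coefficient ring rather than as one of the polynomial variables.
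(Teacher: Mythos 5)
Your argument is correct and is essentially the same as what the paper intends: the Observation is stated without a written proof, and the triangularity of the system in Lemma~\ref{lem:simple} together with the expansion $[u^{\ell-k}]M(z,u,\pt{x})^k=\sum_{i_1+\cdots+i_k=\ell-k}M_{i_1}(z,\pt{x})\cdots M_{i_k}(z,\pt{x})$ (all indices at most $\ell-1$ for $k\ge 1$) is precisely the implicit justification, which your induction spells out. Your extra observations — that $M_\ell$ enters only linearly, that the coefficients lie in $\Z[z]$, and that the displayed general recursion at $\ell=2$ reproduces the explicit $S_2$ equation via $M_0=1$ — are consistent refinements of the claim.
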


In the following, we are interested in pattern occurrences of maps with simple boundary or equivalently, submap occurrences according to Definition~\ref{def:submaps} which do not contain the root face. In turn, we will use the term submap rather freely describing a connected subset of faces in a map. The simplest map with simple boundary is of course just a single face with a simple boundary. We call such a face a \emph{simple polygon}. Counts of simple $k$-gons, as patterns contained in a random map, are proven to satisfy a central limit theorem in \cite{DrmotaNoyYu} by using the generating function of planar maps with a single additional variable counting the $k$-gons in the map. The functional equation for this generating function makes use of another class of maps which will be useful to us as well.

\begin{df}[$P_i$-maps]
    Let \map{m} be a map with root face valency $k > i$. If the first $i$ steps of the path along the boundary of \map{m} starting at its root vertex and in the direction determined by its root edge orientation consists of $i$ distinct edges and $i+1$ distinct vertices, we call the map a $P_i$-map and refer to it as a map with partial simple boundary of length $i$. We denote their generating function by
    $P_i(z,u)$ where $z$ marks the number of edges and $u$ the root face valency.
\end{df}
Clearly, an $S_\ell$-map is a $P_i$-map for all $1\leq i < \ell$. Maps with partial simple boundary were enumerated in \cite{Yu}.
\begin{figure}
    \centering
    \includegraphics[width=0.3\textwidth, page=2]{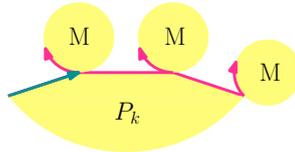}
    \caption{Decomposition of $P_i$-maps}
    \label{fig:dec_Pi}
\end{figure}
The idea to decompose a $P_i$-map is to consider the first $i$ steps of the contour walk along the boundary and make a case distinction. Either the path is simple (in this case we have a $P_i$-map) or the partial contour walk features cycles which are boundaries of maps. The walk can therefore be decomposed into a simple walk of length $k\leq i$ and contour walks along the boundary of maps that are attached at the vertices on the simple walk (see Figure~\ref{fig:dec_Pi}). Again, the decomposition is unaffected by variables counting faces of valency $k$, possibly with restrictions on the shape of their boundary or counting pattern occurrences with simple boundary and the following lemma obviously holds.
\begin{lemma}\label{lem:Yu}
    The generating functions $P_\ell(z,u,\pt{x})$ are given by
    \begin{align*}
        P_0(z,u,\pt{x}) &= M(z,u,\pt{x})\\
        P_{\ell}(z,u,\pt{x}) &= M(z,u,\pt{x})-\sum_{k=0}^{\ell-1}M_k(z,\pt{x})u^k - \sum_{k=0}^{\ell-1} P_{k}(z,u,\pt{x})u^{\ell-k}[u^{\ell-k}] M(z,u,\pt{x})^{k+1}
    \end{align*}
where $M_i(z,\pt{x})$ denotes the generating function for maps with root face valency $i$ and $M(z,u,\pt{x})$ the bivariate generating function for rooted maps, where $u$ marks the root face valency and $z$ marks the number of edges.
\end{lemma}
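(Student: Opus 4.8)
The plan is to prove the formula by a contour-walk decomposition that sorts every rooted map $\map m$ (counted by $M(z,u,\pt x)$) into exactly one of three classes, according to the behaviour of the first $\ell$ steps of the boundary contour walk started at the root vertex in the root-edge direction. The base case $P_0(z,u,\pt x)=M(z,u,\pt x)$ holds by convention, the constraint on the first $0$ steps being vacuous. For $\ell\ge 1$ I would partition the maps into: (i) those that are already $P_\ell$-maps, i.e.\ whose first $\ell$ steps traverse $\ell$ distinct edges and $\ell+1$ distinct vertices; (ii) those of root face valency at most $\ell-1$, so that the contour does not even possess $\ell$ steps; and (iii) those of root face valency at least $\ell$ whose first $\ell$ steps fail to be simple. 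Class (i) is counted by $P_\ell$, and class (ii) contributes exactly $\sum_{k=0}^{\ell-1}M_k(z,\pt x)u^k$, since $M_k(z,\pt x)u^k$ is the part of $M(z,u,\pt x)=\sum_k M_k(z,\pt x)u^k$ of root face valency $k$. The identity then reduces to showing that class (iii) is enumerated by the remaining sum.

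The heart of the argument is the decomposition of class (iii), illustrated in Figure \ref{fig:dec_Pi}. Given such a map, I would strip off, in the standard laminar fashion, every contour excursion occurring within the first $\ell$ steps: each maximal sub-walk that departs from and subsequently returns to a common vertex bounds a dangling map glued into the corner on the root-face side. Removing all such excursions leaves a genuinely simple backbone path $w_0=v_0,w_1,\dots,w_k$ of some length $k$, and I claim $0\le k\le \ell-1$, the strict upper bound holding because, the first $\ell$ steps being non-simple, at least one excursion is present, so $\sum_j t_j=\ell-k>0$, where $t_j$ is the length of the excursion at $w_j$. The backbone, read as a map in its own right via the canonical rooting of Remark \ref{rem:root_edge}, is a $P_k$-map, and the $k+1$ excursions produce an ordered tuple of arbitrary rooted maps $\map d_0,\dots,\map d_k$ attached at $w_0,\dots,w_k$, with root face valencies $t_0,\dots,t_k$ summing to $\ell-k$. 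Conversely, any $P_k$-map together with such a tuple reassembles into a unique map of class (iii) whose decomposition returns the same data, so this is a bijection.

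It remains to read off the generating function. For fixed $k$, the ordered tuples of dangling maps with $\sum_j t_j=\ell-k$ are enumerated by $[u^{\ell-k}]M(z,u,\pt x)^{k+1}$, where $u$ marks the valencies $t_j$ before extraction. The backbone contributes $P_k(z,u,\pt x)$, whose $u$-exponent records its own root face valency $b$; since each excursion of length $t_j$ adds exactly $t_j$ edges to the root face of the reassembled map, the latter has valency $b+(\ell-k)$, so the explicit factor $u^{\ell-k}$ restores precisely the contribution absorbed by the coefficient extraction. Hence class (iii) for a given $k$ is counted by $P_k(z,u,\pt x)\,u^{\ell-k}\,[u^{\ell-k}]M(z,u,\pt x)^{k+1}$, and summing over $0\le k\le \ell-1$ and subtracting classes (ii) and (iii) from $M$ yields the stated formula. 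The auxiliary variables $\pt x$ change nothing, because the faces and simple-boundary pattern occurrences they count lie in the interior of the backbone or of a single dangling map and are thus partitioned additively by the decomposition.

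I expect the main obstacle to be making the decomposition of class (iii) genuinely bijective: one must verify that the excursions within the first $\ell$ steps are unambiguously determined by the laminar (non-crossing) structure of the planar contour, that the backbone is exactly a $P_k$-map (in particular that the convention $P_0=M$ absorbs the trivial backbone arising when the root face valency equals $\ell$, forcing $k=0$), and that the corner-insertion of the dangling maps together with the canonical rooting renders both the attachment point and the root of each $\map d_j$ well defined, so that the coefficient extraction $[u^{\ell-k}]M(z,u,\pt x)^{k+1}$ neither over- nor under-counts.
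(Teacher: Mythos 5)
Your proposal is correct and follows essentially the same route as the paper: the paper also proves Lemma~\ref{lem:Yu} by examining the first $\ell$ steps of the root-face contour walk and splitting maps into $P_\ell$-maps, maps of root face valency below $\ell$, and maps whose partial contour decomposes into a simple walk of length $k<\ell$ with maps attached at its $k+1$ vertices (Figure~\ref{fig:dec_Pi}), yielding the terms $P_k(z,u,\pt{x})\,u^{\ell-k}[u^{\ell-k}]M(z,u,\pt{x})^{k+1}$. Your write-up merely makes explicit the bijection and bookkeeping details (laminar excursion structure, merging of repeated excursions at a vertex, the role of the factor $u^{\ell-k}$) that the paper dismisses as obvious, citing \cite{Yu}.
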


\subsection{Discrete differential equations, power laws and asymptotic normality}\label{sec:powerlaw}

We suppose that $\pt{X}_n = \left(X_n^{(1)}, \ldots, X_n^{(r)}\right)$, $n\ge 0$, is a sequence of random vectors
which distribution can be encoded in a generating function in several variables
\begin{equation}\label{eqyzu}
    F(z,{\bf x}) = \sum_{n\ge 0} f_n \E[ {\bf x}^{{\bf X}_n} ] z^n,
\end{equation}
where ${\bf x} = (x_1,\ldots,x_r)$ and $f_n > 0$ is a proper sequence. Such a situation appears for example, if $f_{n,m_1,\ldots,m_r}$ is a counting sequences that counts
objects of size $n$ such that (proper) parameters of this object have values $m_1,\ldots,m_r$. 
Let $F(z,{\bf x}) = F(z,x_1,\ldots,x_r)$ be the corresponding generating function
\[
    F(z,x_1,\ldots,x_r) = \sum_{n,m_1,\ldots,m_r} f_{n,m_1,\ldots,m_r} z^n x_1^{m_1}\cdots x_r^{m_r}.
\]
If we then define a sequence of random vectors $\pt{X}_n = \left(X_n^{(1)}, \ldots, X_n^{(r)}\right)$ by
\[
    \PP\left[\,\pt{X} = (m_1,\ldots, m_r)\right] = \frac{f_{n,m_1,\ldots,m_r}}{f_n},
\]
where 
\[
    f_n = \sum_{m_1,\ldots, m_r} f_{n,m_1,\ldots,m_r}
\]
is the $n$-th coefficient of the generating function $F(z,\pt{1}) = \sum_n f_n z^n$ counting the total number of objects of size $n$ then we obviously have (\ref{eqyzu}).
It is, thus, only necessary to use the generating function $F(z,{\bf x})$ in order characterize the
distribution of ${\bf X}_n$ since we have
\[
    \E[ {\bf x}^{{\bf X}_n} ] = \frac{[z^n]\, F(z,{\bf x}) }{[z^n]\, F(z,{\bf 1})},
\]
where $[z^n] A(z)$ denotes the $n$-th coefficient of a power series $A(z)$.

It turns out that under quite general assumptions the probability
generating function $\E[ {\bf x}^{{\bf X}_n} ]$ has the following 
asymptotic representation:
\begin{equation}\label{eq:power}
    \E[ {\bf x}^{{\bf X}_n} ] = e^{nf({\bf x}) + g({\bf x}) + O(1/n)},
\end{equation}
uniformly for $|x_j-1|\le \eta$ (for some $\eta > 0$) and 
analytic function $f$ and $g$ with $f({\bf 1})=g({\bf 1}) = 0$.

With the help of such an asymptotic relation (\ref{eq:power}) it can be shown 
that ${\bf X}_n$ satisfies a central limit theorem, see \cite{Hwang} or \cite[Theorem 2.22]{DrmotaTrees}.
Furthermore expected value and variance can  easily be determined:
\begin{align*}
    \E[{\bf X}_n] &= \mu  n + \left( g_{x_1}({\bf 1}),\ldots, g_{x_r}({\bf 1})  \right) +O(n^{-1}) \\
    \mathbb{C}{\rm ov}[{\bf X}_n] &= \Sigma  n + O(1).
\end{align*}
where $h_{y}$ denotes the partial derivative $\frac{\partial h}{\partial y}$ of the function $h(z, y)$ and
\[
        \mu := \left( f_{x_i}({\bf 1}) \right)_{1\le i\le r} \quad \mbox{and}\quad
        \Sigma := \left( f_{x_ix_j}({\bf 1}) + \delta_{i,j} f_{x_i}({\bf 1}) \right)_{1\le i,j\le r}.
    \] 
In the univariate case this reads as
\begin{align}\label{eq:exp_var_uni}
    \E[X_n]  &= f'(1) n + g'(1) + O(n^{-1}) \\
    \V[X_n]  &= (f'(1)+f''(1)) n +  O(1)\nonumber
\end{align}
and in particular, the second factorial moment equals
\begin{align}\label{eq:secmon_uni}
    \E[(X_n)_2]  &= f'(1)^2 n^2 +\left(f''(1)  +  2f'(1)g'(1)\right)n+ O(1).
\end{align}
In our context the objects will be (rooted) planar maps with $n$ edges and we will be interested in the distribution of the number of occurrences of certain faces; see Proposition~\ref{prop:nonself} below and Section~\ref{sec:main}. Their generating functions satisfy discrete differential equations (DDEs) of the form
\[
    M(z,u,\pt{x}) = zQ\big(z,u,M(z,u,\pt{x}),\Delta_{u=1} M(z,u,\pt{x})\big)+\sum_{i=1}^r (x_i-1)zq_i\big(z,u,M(z,u,\pt{x}),M(z,1,\pt{x})\big)
\]
where $\Delta_{u=1} M(z,u,\pt{x}) = \frac{M(z,u,\pt{x})-M(z,1,\pt{x})}{u-1}$, $Q$ is a polynomial and all $q_i$'s are analytic functions at the origin. In \cite[Theorem 4]{DrmotaNoyYu}, Drmota, Noy and Yu showed that the $(q_i)_{1\leq i\leq r}$ stay analytic up to $z=\rho(\pt{x})$ and $u=1$, where $\rho(\pt{x})$ is the radius of convergence of $M(z,1,\pt{x})$ depending on $\pt{x}$. We comment on this fact in more detail in Section \ref{sec:quasi_power}.

A straightforward application of this fact is a central limit theorem for patterns with simple boundary and the property that any interior face of a pattern occurrence cannot be the interior face of another pattern occurrence unless they have a root face preserving isomorphism. In this case we say the pattern cannot \emph{self-intersect}. 

\begin{ex}[Double glued triangles]\label{ex:dgtriang}
    We can apply Proposition \ref{prop:nonself}, which is stated below for example to the pattern of two triangles that are glued together along two edges. Clearly, this pattern cannot self-intersect since none of the edges on the boundary can be an interior edge of the pattern. See Figure \ref{fig:DGT} for an illustration of the pattern and the decomposition of the map as described in Proposition \ref{prop:nonself}. The equation then equals 
    \begin{align*}
        M(z,u,x) &= 1 + zu^2M(z,u,x)^2 + zu\frac{uM(z,u,x) -M(z,1,x)}{u-1} + (x-1)z^{3}P_1(z,u,x)\\
        &= 1 + zu^2M(z,u,x)^2 + zu\frac{uM(z,u,x) -M(z,1,x)}{u-1} \\
        &\qquad+ (x-1)z^{3}\big(M(z,u,x)-1 - zuM(z,1,x)M(z,u,x)\big)
    \end{align*}
    since $M_1(z,x) = zM(z,1,x)$ and $P_0(z,u,x)=M(z,u,x)$. As explained in the proof of Proposition \ref{prop:nonself} below the number of double glued triangles $X_n$ satisfies a central limit theorem and we can compute the expectation and variance by the formulas~(\ref{eq:exp_var_uni}) that say
    \begin{align*}
        \E[X_n]  = f'(1) n + O\left(1\right), \qquad \V[X_n]  = \left(f'(1)+f''(1)\right) n +  O(1),\nonumber
    \end{align*}
    where $f'(1) = -\frac{r'(1)}{r(1)}$, $f''(1) = \frac{r'(1)^2-r(1)r''(1)}{r(1)^2}$ and $r(x)$ is the radius of convergence of $M(z,1,x)$. The function $r(x)$ in turn can be computed as follows.

    Once again, we substitute $v=u-1$ and define $N(z,v,x) := M(z,v+1,x)-1$. Then the equation above translates to
    \begin{align*}
        N(z,v,x) &= z(v+1)^2(N(z,v,x)+1)^2+z(v+1)\left(N(z,v,x)+1+\Delta N(z,v,x)\right)\\
        &\qquad + (x-1)z^3\left(N(z,v,x)-z(v+1)(N(z,0,x)+1)(N(z,v,x)+1)\right).
    \end{align*}
    Next, we apply the method of Bousquet-Mélou and Jehanne~\cite{BMJ} and expand the DDE to the system
    \begin{align*}
        s_2 &= Q(r,s_1,x,s_2,s_3)\\
        s_1 &= s_1Q_g(r,s_1,x,s_2,s_3)+Q_f(r,s_1,x,s_2,s_3)\\
        s_3 &= Q_v(r,s_1,x,s_2,s_3)+s_3Q_g(r,s_1,x,s_2,s_3)\\
        0 & = \left|\left(
        \begin{matrix}
            Q_g-1 & Q_v & Q_f\\
            vQ_{gg} +Q_{fg}& Q_g+vQ_{gg}+Q_{fg}-1 & vQ_{fg}+Q_{ff}\\
            Q_{ug}+fQ_{gg} & Q_{vv}+fQ_{ug} & Q_{vf}+Q_{g}+fQ_{gf}-1
        \end{matrix}
        \right)\right|(r,s_1,x,s_2,s_3)
    \end{align*}
    where $|A| = \det (A)$ and
    \begin{align*} 
        Q(z,v,x,g,f) &= z(v+1)^2(g+1)^2+z(v+1)\left(g+1+f\right)\\
        &\qquad + (x-1)z^3\left(g-z(v+1)(g-vf+1)(g+1)\right).
    \end{align*}
    Its solutions $r(x),v(x),s_1(x),s_2(x),s_3(x)$ define the radius of convergence $r(x)$ of $M(z,1,x)$. Further, by the implicit function theorem, we can compute $r'(1)$ and $r''(1)$ and obtain
    \[
        r(1) = \frac{1}{12},\quad r'(1)= -\frac{7}{186624}, \quad r''(1) = \frac{11}{120932352}.
    \]
    Therefore we have
    \[
        f'(1) = \frac{7}{15552},\quad f'(1)+f''(1) =\frac{108649}{241864704}.
    \]
    The expectation of the number of double glued triangles was also computed in~\cite{Yu} by a different method and coincides with our value. 
\end{ex}
\begin{figure}
    \centering
    \begin{subfigure}[b]{0.45\textwidth}
        \centering
        \includegraphics[width=0.9\textwidth, page=1]{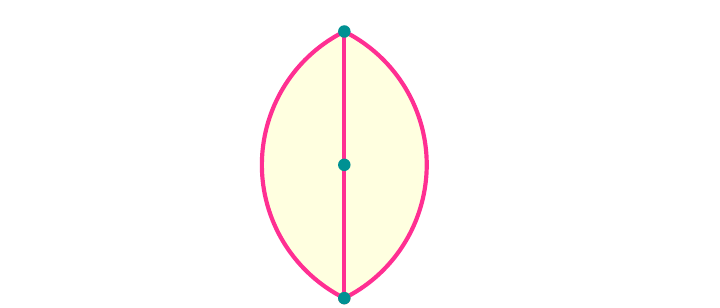}
        \vspace{4mm}
        \caption{The pattern 
        \emph{Double glued triangles}.}
        \vspace{4mm}
    \end{subfigure}
    \begin{subfigure}[b]{0.45\textwidth}
        \centering \includegraphics[width=0.9\textwidth, page=2]{dgtriangle.pdf}
        \caption{If the root edge is incident to a pattern occurrence, we count $P_1$-maps and attach two interior edges plus the new root edge.}
    
    \end{subfigure}
    \caption{Marking double glued triangles}
    \label{fig:DGT}
\end{figure}

For general non-intersecting patterns, the proof works analogously.
\begin{prop}\label{prop:nonself}
    Let \map{p} be a map with simple boundary which cannot self-intersect and $X_n$ be the (random) number of occurrences of \map{p} as a pattern up to rotations in a random rooted planar map on $n$ edges. Then
    \[
        \frac{X_n-\E(X_n)}{\sqrt{\V(X_n)}} \rightarrow \mathcal{N}(0,1)
    \]  
    where $\E(X_n) \sim \mu n$ and $\V(X_n) \sim \sigma^2 n$, for constants $\mu, \sigma^2 \in \R$.
\end{prop}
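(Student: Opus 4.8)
The plan is to reduce the statement to the quasi-power framework of Section~\ref{sec:powerlaw} by producing a discrete differential equation for a suitable pattern-counting generating function and then reading off the asymptotics of its coefficients. First I would introduce the bivariate-plus-catalytic generating function $M(z,u,x)$ counting rooted planar maps, where $z$ marks edges, $u$ the root-face valency, and $x$ marks occurrences of the pattern $\map{p}$ (up to rotations). Using the root-edge decomposition of Remark~\ref{rem:root_edge}, I would split maps according to whether the canonical root edge belongs to an occurrence of $\map{p}$ incident to the root face. Because $\map{p}$ has a simple boundary and, by hypothesis, cannot self-intersect, an occurrence is recognized locally and two occurrences never share an interior face; consequently the contribution of the occurrence touching the root can be written as a power of $z$, counting the edges added when $\map{p}$ is attached at the root, times a $P_i$-map generating function supplied by Lemma~\ref{lem:Yu}, exactly as in the double-glued-triangles computation of Example~\ref{ex:dgtriang}. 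This yields a DDE of the quoted form
\[
    M(z,u,x) = zQ\big(z,u,M(z,u,x),\Delta_{u=1}M(z,u,x)\big) + (x-1)\,z\,q\big(z,u,M(z,u,x),M(z,1,x)\big),
\]
with $Q$ a polynomial and $q$ analytic at the origin; the single $(x-1)$ correction term is precisely the feature that the non-self-intersection hypothesis guarantees.

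Next I would analyze the singular behaviour of $M(z,1,x)$ uniformly in $x$ near $1$. By \cite[Theorem~4]{DrmotaNoyYu}, the auxiliary function $q$ stays analytic up to $z=\rho(x)$ and $u=1$, where $\rho(x)$ is the radius of convergence of $M(z,1,x)$; combining this with the standard square-root singularity of map generating functions, I would establish that $M(z,1,x)$ has an algebraic singularity of the form $M(z,1,x) = a(x) - b(x)\sqrt{1-z/\rho(x)} + \cdots$, with $\rho(x)$, $a(x)$, $b(x)$ analytic in a neighbourhood of $x=1$. Concretely, $\rho(x)$ and the singular data are obtained by the Bousquet-M\'elou--Jehanne kernel method applied to the DDE, as carried out explicitly in Example~\ref{ex:dgtriang}.

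Then transfer theorems for square-root singularities give, uniformly for $x$ close to $1$, the estimate $[z^n]\,M(z,1,x) \sim C(x)\,\rho(x)^{-n} n^{-3/2}$, so that
\[
    \E[x^{X_n}] = \frac{[z^n]\,M(z,1,x)}{[z^n]\,M(z,1,1)} = e^{\,n f(x) + g(x) + O(1/n)},
\]
with $f(x) = \log\!\big(\rho(1)/\rho(x)\big)$ and $g(x) = \log\!\big(C(x)/C(1)\big)$, matching \eqref{eq:power} in the univariate case. An application of Hwang's quasi-power theorem \cite{Hwang} (equivalently \cite[Theorem~2.22]{DrmotaTrees}) then yields the asserted central limit theorem, and formulas \eqref{eq:exp_var_uni} identify $\mu = f'(1) = -\rho'(1)/\rho(1)$ and $\sigma^2 = f'(1)+f''(1)$.

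The main obstacle is the uniform singularity analysis in the second paragraph: one must verify that the square-root singularity persists and that $\rho(x)$ varies analytically as $x$ moves off $1$, so that the quasi-power representation holds uniformly on a complex neighbourhood of $x=1$ rather than merely at $x=1$; this is exactly where \cite[Theorem~4]{DrmotaNoyYu} and the DDE structure do the work. A subsidiary point to check is that $\sigma^2 = f'(1)+f''(1) > 0$, so that the normalization by $\sqrt{\V(X_n)}$ is non-degenerate; this follows from the explicit singular expansion, and is illustrated by the strictly positive value obtained in Example~\ref{ex:dgtriang}. Finally, it is worth stressing that the whole argument leans on the single-correction-term shape of the DDE, which the non-self-intersection assumption provides for free --- removing that assumption is precisely the difficulty addressed in the rest of the paper.
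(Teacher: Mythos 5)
Your proposal follows essentially the same route as the paper: set up the DDE with the single correction term $(x-1)z^{i+1}u^{1-\ell}P_{\ell-1}(z,u,x)$ coming from root-edge deletion (non-self-intersection guaranteeing that exactly one occurrence is destroyed), and then invoke the analyticity result and central limit theorem of Drmota--Noy--Yu \cite{DrmotaNoyYu} to get the quasi-power representation \eqref{eq:power} and conclude via Hwang's theorem. The one inaccuracy is your claimed singular expansion: planar map generating functions of this type have a $3/2$-singularity, $M(z,1,x) = H_1(z,x)+H_2(z,x)\left(1-z/\rho(x)\right)^{3/2}$, giving coefficient asymptotics $C(x)\rho(x)^{-n}n^{-5/2}$ rather than the square-root singularity and $n^{-3/2}$ you state; this slip is harmless here, since the subexponential factor cancels in the ratio $[z^n]M(z,1,x)/[z^n]M(z,1,1)$, so the quasi-power form and the resulting central limit theorem are unaffected.
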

\begin{proof}
    First, we introduce an additional parameter $x$ which counts pattern occurrences of \map{p} and relate the recursive decomposition of planar maps to the generating function 
    \[
        M(z,u,x) = \sum_{j,k \geq 0} M_{j,k}(z)u^jx^k
    \]
    where $M_{j,k}(z)$ is the number of rooted planar maps with root face valency $j$ and $k$ occurrences of \map{p}.
    
    Recall that we count pattern occurrences up to isomorphisms preserving the root face of the pattern. We adapt the classic decomposition of maps by  deleting the root edge. If the root edge is incident to an occurrence of \map{p}, the deletion of the root edge would destroy exactly one pattern occurrence, as \map{p} cannot self-intersect. In this case, we delete not only the root edge but also the $i$ interior edges of the pattern occurrence and count these reduced maps. If $\ell$ is the root face valency of \map{p} and $i$ the number of interior edges of \map{p}, then we thus count maps with $n-i-1$ edges which have a partial simple boundary of length~$\ell-1$.
    
    When we attach the root edge and the interior of \map{p} again, let $r$ be the number of distinct ways to reinsert the interior of \map{p} into the face created by the root edge and the $P_{\ell-1}$-map (e.g. for double glued triangles $r=1$, but for triple glued pentagons $r=2$, see Figure~\ref{fig:DGT}). Clearly, $r$ is the number of patterns in the root-face preserving isomorphism class of~$\map{p}$. So the equations for the generating functions $M_{j,k}(z)$ which count maps with root face valency $j$ and $k$ pattern occurrences satisfy
    \begin{align*}
        M_{0,0}(z) &= 1, \quad M_{0,k} = 0, \quad k \geq 1\\
        M_{j,k}(z) &= z\sum_{s=0}^{j-2} \sum_{t=0}^k M_{s,t}(z)M_{j-s-2,t-k}(z) + z\sum_{s=j-1}^{\infty} M_{s,k}(z) \\
        &\qquad - z\sum_{s=j-1}^\infty r z^i P_{\ell-1, s,k}(z)+z\sum_{s=j-1}^\infty rz^iP_{\ell-1, s,k-1}(z)
    \end{align*}
    which we can sum up to a discrete differential equation by introducing the catalytic variable $u$ for the root face valency and the variable $x$ for the number of pattern occurrences. The system reduces therefore to the equation
    \begin{align*}
        M(z,u,x)&=1+zu^2M(z,u,x)^2+zu\frac{uM(z,u,x)-M(z,1,x)}{u-1} \\
        &\quad+ (x-1)z^{i+1}u^{1-\ell}P_{\ell-1}(z,u,x)
    \end{align*}
    where $P_{\ell}(z,u,x)$ is the generating function of planar maps with a partial simple boundary of length $\ell>1$. In~\cite{DrmotaNoyYu}  show that $P_{\ell}(z,u,x)$ is an analytic function in $z,x,M(z,1,x)$ at the radius of convergence $\rho(x)$ of $M(z,1,x)$ for $x$ close to $1$ and $u\leq 1$. As a consequence of~\cite[Theorem 3]{DrmotaNoyYu} the parameter $x$ satisfies a central limit theorem.
\end{proof}
Note that Proposition \ref{prop:nonself} includes in particular the asymptotic normality of appearances of \emph{simple polygons} in \cite{DrmotaNoyYu}. Other ``trivial" non-self-intersecting patterns are created by adding vertices along the interior edges of a fixed map \map{p} until no segment of the boundary can appear in any other occurrence of the pattern as one of the extended interior edges anymore. 

\begin{ex}[Triple glued pentagons]
    \emph{Triple glued pentagons} are maps with two pure $5$-gons which are adjacent along three edges (see Figure \ref{fig:TGT}). The equation which we obtain in the proof of Theorem~\ref{prop:nonself} equals 
    \begin{align*}
        M(z,u,x) &= 1 + zu^2M(z,u,x)^2 + zu\frac{uM(z,u,x) -M(z,1,x)}{u-1} + (x-1)2z^{4}P_3(z,u,x).
    \end{align*}
     Again, Proposition \ref{prop:nonself} gives immediately a central limit theorem for $X_n$ which denotes the number of triple glued pentagons in a uniform random map with $n$edges. By similar computations as in Example~\ref{ex:dgtriang} we obtain the constants 
    \[
        f'(1) = \frac{737}{34992000},\quad f''(1) =-\frac{15601553}{30611001600000000}
    \]
    and can further conclude that
    \[
        \E [X_n] = \frac{737}{34992000}n, \quad \mbox{and}\quad \V [X_n] = \frac{644711998447}{30611001600000000}n
    \]
\end{ex}

\begin{figure}
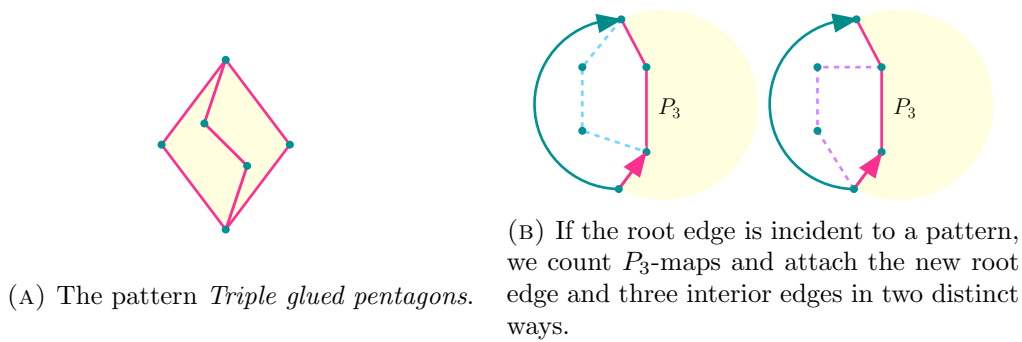

    \centering
    \begin{subfigure}[b]{0.45\textwidth}
        \centering
        \includegraphics[width=0.9\textwidth, page=3]{dgtriangle.pdf}
        \vspace{4mm}
        \caption{The pattern 
        \emph{Triple glued pentagons}.}
        \vspace{4mm}
    \end{subfigure}
    \begin{subfigure}[b]{0.45\textwidth}
        \centering \includegraphics[width=0.9\textwidth, page=4]{dgtriangle.pdf}
        \caption{If the root edge is incident to a pattern, we count $P_3$-maps and attach the new root edge and three interior edges in two distinct ways.}
    
    \end{subfigure}
    \caption{Marking triple glued pentagons}
    \label{fig:TGT}
\end{figure}

\subsection{The moment method}\label{sec:momentmeth}
There is another interesting theorem on asymptotic normality by Gao and Wormald \cite{GaoWormald} that relies on the asymptotics of the factorial moments. As usual $(x)_k$ denotes the falling factorial $(x)_k = x(x-1)\cdots (x-k+1)$.

\begin{theo}[Gao, Wormald] \label{thm:gao_wormald}
    Suppose that $\mu_n$ and $\sigma_n$ are sequences of positive numbers with the following properties:
    \[
        \mu_n \to \infty, \quad \sigma_n \log^2 \sigma_n = o(\mu_n), \quad \mu_n = o(\sigma_n^3) \quad (n\to\infty).
    \]
    Suppose further that a sequence $(X_n)$ of non-negative integer valued random variables satisfy
    \begin{equation}\label{eqthGW}
        \E[(X_n)_k] \sim \mu_n^k \exp\left( \frac{k^2}2 \frac{ \sigma_n^2 - \mu_n}{\mu_n^2}  \right) 
    \end{equation}
    uniformly for all $k$ in the range $c\mu_n/\sigma_n  \le k \le c'\mu_n/\sigma_n$ for some constants 
    $c' > c > 0$ Then
    \[
        \frac{X_n - \mu_n}{\sigma_n} \to N(0,1).
    \]
\end{theo}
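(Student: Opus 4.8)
The plan is to access the distribution of $X_n$ through its \emph{factorial-moment generating function}, using the identity
\[
\E\big[(1+s)^{X_n}\big]=\sum_{k\ge 0}\E[(X_n)_k]\,\frac{s^k}{k!},
\]
which reduces the problem to summing the known asymptotic for $\E[(X_n)_k]$ against the Poisson-type weights $s^k/k!$. Setting $s=\lambda/\sigma_n$ and writing $b_n=(\sigma_n^2-\mu_n)/(2\mu_n^2)$, the summand behaves like $(\mu_n s)^k e^{b_n k^2}/k!$, whose logarithm, after Stirling, is a concave function of $k$ maximised near $k^\ast\asymp\lambda\,\mu_n/\sigma_n$. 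First I would check that for $\lambda$ of order $1$ this maximiser lands inside the window $[c\mu_n/\sigma_n,\,c'\mu_n/\sigma_n]$ where the asymptotic for $\E[(X_n)_k]$ is available, and that the second derivative of the log-summand yields a Gaussian peak of width $\asymp\sqrt{\mu_n/\sigma_n}$. The growth hypotheses enter precisely here: $\sigma_n\log^2\sigma_n=o(\mu_n)$ forces $\mu_n/\sigma_n\to\infty$, so the window is genuinely wide and spans many standard deviations of the peak, while $\mu_n=o(\sigma_n^3)$ is the Lyapunov-type condition ensuring the third-order (skewness) contribution is negligible, so that the limit is genuinely Gaussian rather than skewed.

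Next I would evaluate the sum by Laplace's method, obtaining
\[
\E\big[(1+s)^{X_n}\big]=\exp\Big(\mu_n\log(1+s)+\tfrac12\sigma_n^2\log^2(1+s)+o(1)\Big),
\]
which is exactly the moment generating function of $N(\mu_n,\sigma_n^2)$ at argument $\log(1+s)$. A clean way to see why this is the right answer is to observe that $\mu_n^k\exp(\tfrac{k^2}{2}\gamma_n)$, with $\gamma_n=(\sigma_n^2-\mu_n)/\mu_n^2$, is the $k$-th factorial moment of a mixed Poisson law $\mathrm{Poi}(\Lambda_n)$ with $\log\Lambda_n\sim N(\log\mu_n,\gamma_n)$; since $\gamma_n\to0$ under the hypotheses, this reference law has mean $\sim\mu_n$, variance $\sim\sigma_n^2$, and is itself asymptotically normal, so matching its factorial moments in the window is what pins down the Gaussian limit.

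The principal obstacle is that the hypothesis only controls $\E[(X_n)_k]$ inside the window, whereas the series defining $\E[(1+s)^{X_n}]$ runs over all $k$. I would therefore need to show that the terms with $k$ outside $[c\mu_n/\sigma_n,\,c'\mu_n/\sigma_n]$ are negligible: for $k$ below the window the factors $s^k/k!$ are summable and small, but for $k$ above the window one needs an a priori upper bound on the factorial moments --- for instance from log-convexity of the sequence $\E[(X_n)_k]$, or from a crude combinatorial count of $k$-tuples of occurrences --- strong enough to beat the Poisson weights. Making this tail estimate uniform, and arranging the accessible values of $\lambda$ (equivalently, of the transform argument) to be dense enough to determine the limit, is the heart of the argument.

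Finally I would pass from the transform back to the distribution. Running the same Laplace analysis along a complex contour --- taking $s=e^{i\tau/\sigma_n}-1$, or evaluating $\PP[X_n=m]$ by Cauchy's formula $\frac{1}{2\pi i}\oint \E[t^{X_n}]\,t^{-m-1}\,dt$ with the radius chosen so that the saddle sits at $m\approx\mu_n+x\sigma_n$ --- should produce a Gaussian local limit estimate. Integrating this over the bulk, together with tightness supplied by the variance bound $\V(X_n)\sim\sigma_n^2$, then yields $(X_n-\mu_n)/\sigma_n\to N(0,1)$ via the continuity theorem. I expect the complex saddle-point localisation, and the uniform tail control that underlies it, to be the most delicate steps.
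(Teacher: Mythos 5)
First, a point of comparison: the paper you were given does not prove this theorem at all --- it is imported verbatim from Gao and Wormald \cite{GaoWormald} --- so the relevant benchmark is the proof in that reference. That proof never leaves the window $c\mu_n/\sigma_n\le k\le c'\mu_n/\sigma_n$. The reason it can afford this is that for $k$ of this order the weight $(m)_k$ varies only by bounded multiplicative factors as $m$ ranges over $\mu_n+O(\sigma_n)$; writing $k=t\mu_n/\sigma_n$, the hypothesis (\ref{eqthGW}) plays the role of convergence of the moment generating function of $(X_n-\mu_n)/\sigma_n$ at each real argument $t\in[c,c']$, and it is the uniformity over this nondegenerate interval, together with the growth conditions on $\mu_n$ and $\sigma_n$, that forces the Gaussian limit. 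Your mixed-Poisson heuristic is a correct reading of what (\ref{eqthGW}) says, but the entire point of the theorem --- and the reason it is useful in this paper, where nothing can be said about clusters of three or more overlapping patterns --- is that factorial moments of order \emph{outside} this window are never needed, and indeed are never available.

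This is exactly where your proposal breaks: it is built on $\E[(1+s)^{X_n}]=\sum_{k\ge0}\E[(X_n)_k]\,s^k/k!$ (and later on Cauchy's formula for $\E[t^{X_n}]$ over a full circle), which requires control of \emph{all} factorial moments, and no such control can be extracted from the hypotheses. Concretely, let $X_n$ satisfy the hypotheses and define $X_n'$ to equal $X_n$ with probability $1-e^{-\mu_n}$ and to equal $N_n=\lceil e^{\sqrt{\sigma_n}}\rceil$ with probability $e^{-\mu_n}$. Inside the window this changes $\E[(X_n)_k]$ by at most $e^{-\mu_n}\E[(X_n)_k]+e^{-\mu_n}N_n^{k}\le e^{-\mu_n}\E[(X_n)_k]+e^{-\mu_n(1-c'/\sqrt{\sigma_n})}$, which is negligible relative to the right-hand side of (\ref{eqthGW}), so $X_n'$ satisfies the hypotheses and (having total variation distance $e^{-\mu_n}$ from $X_n$) has the same limit law. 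But at $k=\lceil 2\mu_n/\sqrt{\sigma_n}\rceil$, which lies far above the window since $\sigma_n\to\infty$, the added mass contributes at least $e^{-\mu_n}(N_n/2)^{k}=e^{\mu_n(1-o(1))}$ to $\E[(X_n')_k]$, while for $s=\lambda/\sigma_n$ one has $|\log(s^k/k!)|=O(\mu_n\log\sigma_n/\sqrt{\sigma_n})=o(\mu_n)$; a single term of your series therefore blows up like $e^{\mu_n(1-o(1))}$, so the series cannot have the Laplace-type evaluation you claim, even though both the hypotheses and the conclusion of the theorem hold. Neither of your proposed rescues exists: log-convexity (even granting it) controls terms \emph{between} known values and only bounds terms beyond the window from \emph{below}, which is the useless direction; and there is no ``combinatorial count of $k$-tuples of occurrences'' to appeal to, because the theorem concerns arbitrary nonnegative integer-valued random variables with no underlying combinatorial structure. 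Any correct proof must, like the original, work entirely inside the window; an argument that reconstructs the probability generating function is not a delicate variant but a dead end.
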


We show next that condition (\ref{eq:power}) is sufficient to obtain also (\ref{eqthGW}) and, thus,
we can check asymptotic normality also with this approach. So suppose that $\pt{X}_n = \left(X_n^{(1)},X_n^{(2)},\dots, X_n^{(m)}\right)$ is a sequence of random variables with the property 
    that the probability generating function $\E[ \pt{x}^{\pt{X}_n}]$ can be represented
    as
    \begin{equation*}\tag{\ref{eq:power}}
        \E[ \pt{x}^{\pt{X}_n}] = e^{nf(\pt{x}) + g(\pt{x}) + O(1/n)}
    \end{equation*}
    uniformly for complex $\pt{x}$ with $|x_i-1|<\eta$ (for some $\eta> 0$) and analytic functions
    $f(\pt{x})$ and $g(\pt{x})$ with $f(\pt{1}) = g(\pt{1}) = 0$ and $f_{x_i}(\pt{1})> 0$ for all $1\leq i \leq m$. 
    We recall that the expected value and variance are given by
    \[
        \E[\pt{X}_n] = f_\pt{x}(\pt{1})n + g_\pt{x}(\pt{1}) + O(1/n)\quad \mbox{and} \quad \mathbb{V}{\rm ar}[\pt{X}_n]= \Sigma n + O(1),
    \]
    where $h_{\pt{x}}$ denotes the gradient of the function $h$ and $(\Sigma)_{i,j} = f_{x_ix_j}(\pt{1}) + \delta_{i,j}f_{x_i}(\pt{1})$.

\begin{lemma}\label{LeMm}
    Suppose that the $m$-dimensional random vector $\pt{X}_n = \left(X_n^{(1)},X_n^{(2)},\dots, X_n^{(m)}\right)$ is given as above
and satisfies (\ref{eq:power}). Then we have
    \begin{equation}\label{eqLeMm}
        \E[(\pt{X}_n)_{\pt{k}}] =  \prod_{i=1}^m(nf_{x_i}(\pt{1}))^{k_i}
        \exp\left(\frac{1}{2n} \left<\pt{k},\Sigma  \pt{k} \right>\right) \left( 1 + O\left( \sum_{i=1}^m k_i^{3/2}/n \right) \right),
    \end{equation}
    where $\pt{k}=(k_1,k_2,\dots k_m)^T$, $(\Sigma)_{ij} = \frac{f_{x_ix_j}(\pt{1})}{f_{x_i}(\pt{1})f_{x_j}(\pt{1})}$ and the above holds uniformly for all $1\le k_i \le C \sqrt n$, where $C> 0$ is an arbitrary constant.
\end{lemma}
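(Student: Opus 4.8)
The plan is to isolate an exact Poisson factor and treat the remainder by a saddle-type contour estimate. Write $a_i = f_{x_i}(\pt 1)>0$ and factor the probability generating function as $\E[\pt x^{\pt X_n}] = P(\pt x)R(\pt x)$, where $P(\pt x) = \prod_{i=1}^m e^{na_i(x_i-1)}$ is the joint generating function of independent Poisson variables with means $na_i$, and $R(\pt x) = \exp\big(\rho(\pt x)\big)$ with
\[
\rho(\pt x) = n\Big(f(\pt x) - \sum_i a_i(x_i-1)\Big) + g(\pt x) + O(1/n).
\]
By construction $\rho(\pt 1)=0$, and the part of $\rho$ carrying a factor $n$ starts at second order in $\pt\zeta = \pt x - \pt 1$, so near $\pt 1$
\[
\rho(\pt 1+\pt\zeta) = \tfrac n2\sum_{i,j}f_{x_ix_j}(\pt1)\zeta_i\zeta_j + g_{\pt x}(\pt 1)\cdot\pt\zeta + O\big(n|\pt\zeta|^3\big) + O\big(|\pt\zeta|^2\big) + O(1/n).
\]
Since $\E[\pt x^{\pt X_n}]$ is a polynomial in each $x_i$ and $P$ is entire and nonvanishing, $R$ is entire, while the quasi-power estimate is available on $|x_i-1|\le\eta$. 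The factorial moments are the mixed derivatives of the generating function at $\pt 1$, so Cauchy's formula gives
\[
\E[(\pt X_n)_{\pt k}] = \frac{\prod_i k_i!}{(2\pi i)^m}\oint\cdots\oint \prod_{i=1}^m\frac{e^{na_i\zeta_i}}{\zeta_i^{k_i+1}}\,R(\pt1+\pt\zeta)\,d\pt\zeta,
\]
over circles $|\zeta_i| = r_i$ to be chosen.

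I would take the Poisson saddle radii $r_i = k_i/(na_i)$, which satisfy $r_i = O(n^{-1/2})<\eta$ throughout the range $1\le k_i\le C\sqrt n$. Freezing $R$ at the real saddle point $\pt 1+\pt r$ and pulling it out leaves exactly $\prod_i(na_i)^{k_i}$, because $\frac{k_i!}{2\pi i}\oint e^{na_i\zeta_i}\zeta_i^{-k_i-1}\,d\zeta_i = (na_i)^{k_i}$ is the exact factorial moment of a Poisson variable. For the frozen factor, the leading part of $\rho(\pt1+\pt r)$ is $\frac n2\sum_{i,j}f_{x_ix_j}(\pt1)r_ir_j = \frac1{2n}\sum_{i,j}\frac{f_{x_ix_j}(\pt1)}{a_ia_j}k_ik_j = \frac1{2n}\langle\pt k,\Sigma\pt k\rangle$, which is precisely the claimed exponent. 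The remaining pieces are negligible at the required precision: $g_{\pt x}(\pt1)\cdot\pt r = O(|\pt k|/n)$, the cubic term is $O\big(\sum_i k_i^3/n^2\big)$, and the quadratic-in-$g$ and $O(1/n)$ terms are smaller still; since $k_i\le C\sqrt n$ forces $k_i\le n^{2/3}$ and $k_i\le k_i^{3/2}$, all of these are $O\big(\sum_i k_i^{3/2}/n\big)$. This yields the main term $\prod_i(na_i)^{k_i}\exp\big(\frac1{2n}\langle\pt k,\Sigma\pt k\rangle\big)$.

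The main obstacle is to show that freezing $R$ costs only a relative error of order $\sum_i k_i^{3/2}/n$, uniformly for $1\le k_i\le C\sqrt n$. I would bound $|R(\pt1+\pt\zeta)-R(\pt1+\pt r)| = |R(\pt1+\pt r)|\,\big|e^{\rho(\pt1+\pt\zeta)-\rho(\pt1+\pt r)}-1\big|$ against the modulus of the Poisson kernel, whose contour integral is itself of order $\prod_i(na_i)^{k_i}$ (via $k_i!\,I_0(k_i)/k_i^{k_i}\to 1$), so that no accuracy is lost a priori. The kernel concentrates near $\theta_i=0$ on the scale $|\theta_i|\lesssim k_i^{-1/2}$, where $|\zeta_i-r_i|\lesssim \sqrt{k_i}/(na_i)$; in this bulk the dominant contribution to $\rho(\pt1+\pt\zeta)-\rho(\pt1+\pt r)$ is the cross term $n\,f_{x_ix_j}(\pt1)\,r_i(\zeta_j-r_j)$, of size $O(k_i\sqrt{k_j}/n)$, which sums to $O\big(\sum_i k_i^{3/2}/n\big)=o(1)$ and gives the stated relative error. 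Outside the bulk the kernel decays superpolynomially while $R$ stays bounded (indeed $n|\pt\zeta|^2=O(k^2/n)=O(1)$ on the contour, so $|\rho|=O(1)$ there), so the tails are negligible once the $k_i$ grow; for bounded $k_i$ the crude estimate $|\rho(\pt1+\pt\zeta)-\rho(\pt1+\pt r)|=O(|\pt k|^2/n)=O(1/n)$ already matches the target, and one may equally well expand $\E[(\pt X_n)_{\pt k}]=\partial^{\pt k}(PR)|_{\pt1}$ directly by Leibniz, using $\partial^{\pt j}P(\pt1)=\prod_i(na_i)^{j_i}$ and Taylor control of the finitely many derivatives of $R$. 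Making the bulk/tail estimates uniform across the full range of $\pt k$ — in particular for moderate $k_i$, where the concentration is only mild — is the delicate point; everything else is Taylor expansion together with the exact Poisson extraction.
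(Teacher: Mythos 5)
Your proposal is correct and takes essentially the same route as the paper's proof: both are Cauchy-integral saddle-point arguments on circles of radius $k_i/(nf_{x_i}(\pt{1}))$, extracting the exact Poisson factorial moment as the main term, evaluating the quadratic part of the exponent at the saddle to produce $\exp\left(\frac{1}{2n}\left<\pt{k},\Sigma\pt{k}\right>\right)$, and controlling the remainder by the concentration of the kernel at scale $|\theta_i|\lesssim k_i^{-1/2}$, which is exactly where the relative error $O\left(\sum_i k_i^{3/2}/n\right)$ comes from. Your factorization into a Poisson part $P$ times $R$ and ``freezing $R$ at the saddle'' is only a repackaging of the paper's direct expansion of the integrand, and the uniformity issue you flag for moderate $k_i$ is not a real obstacle: the bound $\Re\left(e^{i\theta}-1-i\theta\right)\le -c\theta^2$ on $[-\pi,\pi]$ holds for every $k_i\ge 1$, so the Gaussian-majorant estimate works uniformly over the whole range $1\le k_i\le C\sqrt{n}$ without a separate bulk/tail or small-$k$ case.
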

The proof of this lemma can be found in Section \ref{sec:proofs}.

Of course the lemma entails an alternative proof of Proposition \ref{prop:nonself} via Theorem \ref{thm:gao_wormald}. This approach certainly seems overly complicated in the case of patterns which cannot self-intersect but we will extend the approach in Section \ref{sec:main} to general patterns with simple boundary. In the general case it seems to be impossible to introduce a single counting variable for pattern occurrences like in Proposition \ref{prop:nonself}. The factorial moments on the other hand can be computed in several different manners. We will use the simple observation that
\begin{equation}\label{eq:kthmom}
    \E\left[(X_n)_k\right] = \sum_{i\geq k} (i)_k \frac{m_{n,i}}{m_n} = \frac{m_{n,k}^\circ}{m_n}
\end{equation}
where $m_n$ is the number of maps with $n$ edges, $m_{n,i}$ counts the number of maps with $n$ edges and $i$ pattern occurrences (in total) and $m^\circ_{n,k}$ is the number of maps with $n$ edges and $k$ pattern occurrences selected (among arbitrary many pattern occurrences) and labelled from $1$ to $k$. The labelled pattern occurrences   in general might intersect in all kinds of ways.
In the next step, we show that $m_{n,k}^\circ$ is asymptotically governed by the number of maps where labelled patterns intersect with at most one other labelled pattern. The intuition behind this lemma is that we will prove that $\sigma_n = \Theta(\sqrt{n})$ and there are almost surely a linear number of pattern occurrences~\cite{RichmondWormald}. Labelling only $k = \Theta(\mu_n/\sigma_n) = \Theta(\sqrt{n})$ of them, is thus such a sparse selection that it is rather unlikely that a labelled pattern occurrence would intersect with more than one other labelled pattern occurrence.

\begin{lemma}\label{lem:S1}
    Let \map{p} be a map with simple boundary and $\mu_n$ the expected number of occurrences of \map{p} in a random map with $n$ edges. Further, let $m^\circ_{n,k}$ be the number of all maps on $n$ edges with $k = \Theta(\sqrt{n})$ labelled occurrences of \map{p} and let $m^{\circ,\times}_{n,k}$ be the number of all such maps with at least $\mu_n/2$ pattern occurrences in total and where each labelled pattern occurrence intersects at most one other labelled occurrence of \map{p}. Then as $n$ tends to infinity, there is a constant $C>0$ such that for all $n$ large enough it holds that
    \[
        m_{m,k}^{\circ,\times} \leq m_{n,k}^\circ \leq  m_{m,k}^{\circ,\times} + \left(\frac{\mu_n}{2}\right)^k m_{n} + O\left(\frac{m_{n,k}^\circ}{\sqrt{n}}\right) 
    \]
\end{lemma}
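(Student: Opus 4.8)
The plan is to split the surplus $m^\circ_{n,k}-m^{\circ,\times}_{n,k}$ into two kinds of ``bad'' labelled configurations and bound each separately. Recall from \eqref{eq:kthmom} that $m^\circ_{n,k}=\sum_{\map{m}}(N(\map{m}))_k=\sum_{i\ge k}(i)_k m_{n,i}$, where $N(\map{m})$ is the number of occurrences of \map{p} in \map{m} and a labelled configuration is a map together with an injection from $\{1,\dots,k\}$ into its set of occurrences. The configurations counted by $m^{\circ,\times}_{n,k}$ are exactly those that (i) have at least $\mu_n/2$ occurrences in total and (ii) in which no labelled occurrence meets two other labelled occurrences; since these form a subcollection of all configurations counted by $m^\circ_{n,k}$, the lower bound $m^{\circ,\times}_{n,k}\le m^\circ_{n,k}$ is immediate. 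The configurations failing (i) are then cheap: if a map carries $N<\mu_n/2$ occurrences, the number of ways to label $k$ of them is $(N)_k\le N^k<(\mu_n/2)^k$, so summing over the at most $m_n$ such maps contributes at most $(\mu_n/2)^k m_n$, which is precisely the second term of the claim.

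The heart of the matter is bounding configurations satisfying (i) but failing (ii), namely those with $N\ge\mu_n/2$ in which some labelled occurrence meets at least two others. First I would record the key rigidity fact: because \map{p} is a fixed finite map, at most a constant number $c_\map{p}$ of occurrences of \map{p} can share a given edge or interior face of \map{m}, since an occurrence is determined once the image of a single oriented edge (with a chosen side) is fixed, the rest being forced by tracing the rotation system. Consequently every occurrence $O$ meets at most a constant number $D=D(\map{p})$ of other occurrences, and I write $d(O)\le D$. I would also note $N=O(n)$ by Euler's formula (each occurrence uses an interior face, each face lies in at most $c_\map{p}$ occurrences), and recall $\mu_n=\Theta(n)$ from the linearity of the expectation~\cite{DrmotaStufler,RichmondWormald}.

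For a fixed map with $N$ occurrences, a union bound over the triple of labels $a,b,c$ witnessing the failure of (ii) bounds the number of such configurations by
\[
    k^3\,N^{k-3}\sum_{O}d(O)\bigl(d(O)-1\bigr)\;\le\;D^2k^3N^{k-2},
\]
since, once the witnessing occurrence $O_a$ and its two labelled neighbours $O_b,O_c$ are chosen, the remaining $k-3$ labels range freely over the $\le N$ occurrences. Summing over all maps with $N\ge\mu_n/2$ and using $N=\Theta(n)$ together with the uniform comparison $(N)_k=\Theta(N^k)$ (valid because $k^2/N=O(1)$ when $k=\Theta(\sqrt n)$ and $N\ge\mu_n/2=\Theta(n)$) replaces $N^{k-2}$ by $O\!\left((N)_k/n^2\right)$, whence the total is
\[
    O\!\left(\frac{k^3}{n^2}\right)\sum_{\map{m}}(N)_k=O\!\left(\frac{k^3}{n^2}\right)m^\circ_{n,k}=O\!\left(\frac{m^\circ_{n,k}}{\sqrt n}\right),
\]
again by $k=\Theta(\sqrt n)$. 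Adding the two estimates gives the stated upper bound.

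The step I expect to be the main obstacle is the rigidity claim that only $O(1)$ occurrences pass through any given edge or face, hence $d(O)=O(1)$: it forces one to pin down exactly when two occurrences ``intersect'' (sharing an edge or interior face, rather than merely a vertex, so that a high-degree vertex cannot inflate the count) and to verify carefully that an occurrence of the fixed pattern \map{p} is determined by the image of a single oriented edge. Everything else is routine bookkeeping: the factorial-moment identity \eqref{eq:kthmom}, the crude bound $(N)_k\le N^k$, and the uniform comparison $(N)_k=\Theta(N^k)$ valid on the range $N=\Theta(n)$, $k=\Theta(\sqrt n)$.
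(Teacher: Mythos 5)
Your proof is correct and follows essentially the same route as the paper: both split maps according to whether the total number of occurrences exceeds $\mu_n/2$, bound the sparse case trivially by $(\mu_n/2)^k m_n$, and handle the dense case via the key fact that each occurrence of \map{p} intersects only $O(1)$ other occurrences, so that labellings with a triply-intersecting labelled occurrence form an $O(k^3/n^2)=O(1/\sqrt{n})$ fraction of all labellings. The only cosmetic differences are your flag-rigidity justification of the constant intersection bound (the paper instead uses the bound $c=vf^2$ obtained by counting face matchings and rotations) and your comparison $(N)_k=\Theta(N^k)$ where the paper divides the bad count by the exact falling factorial $(p)_k$.
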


The proof is based on elementary counting and is deferred to Section \ref{sec:proofs}. 

The great advantage of restricting our considerations to $m_{n,k}^{\circ, \times}$ is that we can introduce additional counting variables which mark single patterns and intersecting pairs. Let $i$ be the number of distinct ways that two pattern occurrences can intersect. Naturally $i$ is finite and so we can categorize the submaps of two intersecting pattern occurrences into  \emph{intersection types $1,2,\dots, i$}. Then we could set up a functional equation of the form
\begin{align*}
    M(z,u,\pt{x}) = 1 &+ zu^2M(z,u,\pt{x})^2 + zu\frac{uM(z,u,\pt{x})-M(z,1,\pt{x})}{u-1}\\
&+x_0Q_0(z,u,M(z,u,\pt{x}),M(z,1,\pt{x}))+\dots \\
&+x_iQ_i(z,u,M(z,u,\pt{x}),M(z,1,\pt{x}))
\end{align*}
where $Q_j(z,u,M(z,u,\pt{x}),M(z,1,\pt{x}))$ are generating functions which describe the situation where the root edge of the map is incident to a single pattern ($j=0$) or an intersecting pair of intersection type $j \in [i]$. The variables $\pt{x}$ are associated with labelled patterns of the various types. Subsequently, we would need to solve the equations and do a (multivariate) coefficient extraction of $[x_0^{\ell_0}x_1^{\ell_1}\cdots x_i^{\ell_i}]$, where $\ell_0 + 2\sum_{j=1}^i \ell_j = k$. Unfortunately, this would lead to a fairly technical and complicated analysis of the generating functions.

Thus, we will take a different approach and reduce the problem to computing the moments of face counts by deleting the interior edges of the marked pattern occurrences. Face counts in turn have been proven to satisfy a central limit theorem in~\cite{DrPa} and their probability generating function satisfies~(\ref{eq:power}) such that we can apply Lemma~\ref{LeMm}. The factorial moments of the pattern counting random variable then turns out to be just a linear combination of the factorial moments of face counting variables.

We will illustrate the procedure in the next section for a specific example and present a general theorem in Section \ref{sec:main}.

\section{Monitoring koalas in random planar maps}\label{sec:koalas}

This section is devoted to proving a central limit theorem of the number of occurrences of a pattern which we call \emph{koala}. Note that since we count pattern occurrences up to root-face preserving isomorphisms, there is no need to specify the root edge. A koala consists of a simple 4-gon adjacent to two simple 2-gons such that the whole pattern has a simple boundary. The root face of a koala is a simple 4-gon (see Figure \ref{fig:single}). Once the reader reaches the next section, they can think of it as a detailed example of the more general theorem given there. However, we hope it clarifies the necessity of several additional parameters and variables in the proof of the general central limit theorem for patterns with simple boundary.

\begin{figure}
    \centering
    \begin{subfigure}{0.48\textwidth}
        \includegraphics[width=\textwidth, page=2]{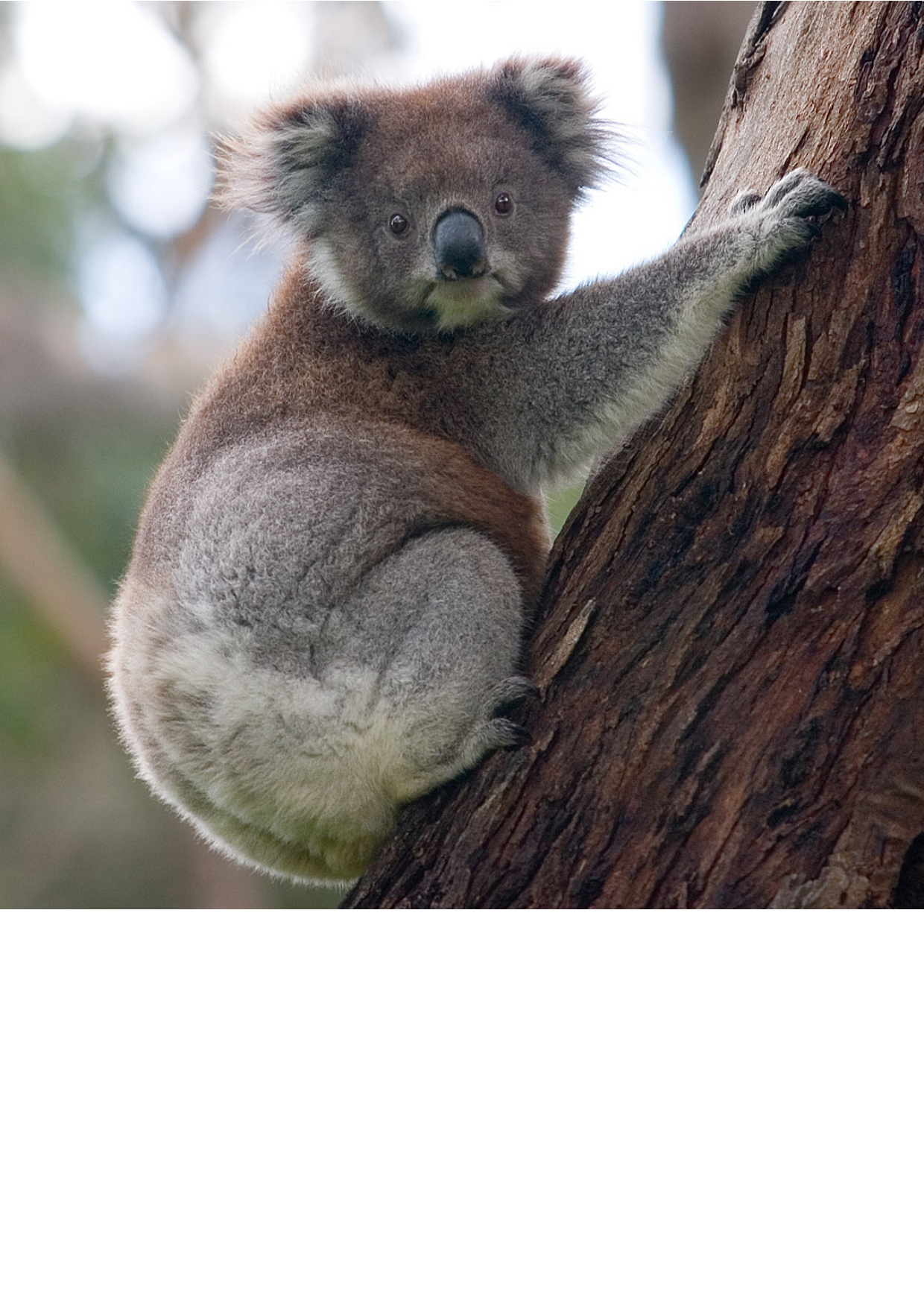}
        \caption{Photo by DAVID ILIFF. (CC BY-SA 3.0)}
    \end{subfigure}\hfill
    \begin{subfigure}{0.48\textwidth}
        \includegraphics[width=\textwidth, page=4]{koala.pdf}
        \caption{Koala patterns have a simple boundary}
    \end{subfigure}
    \caption{A koala (pattern).}
    \label{fig:single}
\end{figure}

\begin{theo}\label{thm:central_koala}
    Let $X_n$ be the number of koalas in a random rooted planar map with $n$ edges. Then
    \[
        \frac{X_n - \mu_n}{\sigma_n} \;\longrightarrow\; \mathcal{N}(0,1)
    \]
    where $\E[X_n] \sim \mu n$, $\V[X_n] \sim \sigma^2n$ and $\mu, \sigma \in \R$.
\end{theo}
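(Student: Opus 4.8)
The koala can self-intersect — two koalas may share a bigon, share the central $4$-gon, or overlap along boundary edges — so the single-variable generating-function argument behind Proposition~\ref{prop:nonself} is unavailable. The plan is instead to verify the factorial-moment criterion of Theorem~\ref{thm:gao_wormald} for $k=\Theta(\sqrt n)$. I would start from $(\ref{eq:kthmom})$, which gives $\E[(X_n)_k]=m^\circ_{n,k}/m_n$, and apply Lemma~\ref{lem:S1} to replace $m^\circ_{n,k}$ by $m^{\circ,\times}_{n,k}$, the number of maps in which every labelled koala meets at most one other labelled koala. In the range $k=\Theta(\mu_n/\sigma_n)=\Theta(\sqrt n)$ the two error terms of Lemma~\ref{lem:S1}, namely $(\mu_n/2)^k m_n=2^{-\Theta(\sqrt n)}\mu_n^k m_n$ and $O(m^\circ_{n,k}/\sqrt n)$, are of smaller order, so it suffices to analyse $m^{\circ,\times}_{n,k}$.

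Next I would reduce to face counts. One first enumerates the finitely many \emph{intersection types} in which two koalas overlap; for each type the union is again a map with simple boundary. Deleting the two interior edges of an isolated labelled koala collapses it to a $4$-gon face (its outline), and deleting the interior edges of a labelled intersecting pair collapses the cluster to a single face of the corresponding type. A map counted by $m^{\circ,\times}_{n,k}$ carrying $s$ isolated and $p_t$ type-$t$ paired labelled koalas is thereby turned, bijectively up to bounded refill multiplicities, into a map with $s$ marked $4$-gon faces and $p_t$ marked type-$t$ cluster faces on $n-D$ edges, where $D=2s+\sum_t d_t p_t$ and $s+2\sum_t p_t=k$. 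Writing $\pt{Y}_N$ for the vector of these face counts on a uniform random map with $N$ edges, and using $m_N\sim c\rho^{-N}N^{-5/2}$ so that $m_{n-D}/m_n\sim\rho^{D}$ factorises over the blocks (here $D=\Theta(\sqrt n)=o(n)$, so the polynomial part is negligible), I would obtain an exact identity of the shape
\begin{equation*}
    \E[(X_n)_k]=\sum_{s+2\sum_t p_t=k}\frac{k!}{s!\,\prod_t p_t!}\;\omega_0^{\,s}\prod_t\omega_t^{\,p_t}\;\E\!\left[(Y_0)_s\textstyle\prod_t (Y_t)_{p_t}\right]\bigl(1+o(1)\bigr),
\end{equation*}
where $\omega_0,\omega_t$ collect the refill multiplicities and the $\rho$-powers and $Y_0,Y_t$ are the $4$-gon and cluster face counts (the factor $2^{\sum_t p_t}$ from pairing up positions cancels against the ordering of each paired block).

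Since face counts satisfy the quasi-power law $(\ref{eq:power})$ (the structural input of \cite{DrmotaNoyYu,DrPa}), Lemma~\ref{LeMm} applies to $\pt{Y}_N$ and expresses each joint factorial moment as a product $\prod_i(Nf_{x_i}(\pt{1}))^{\kappa_i}$ times the correction $\exp(\tfrac1{2N}\langle\pt{\kappa},\Sigma\pt{\kappa}\rangle)$ of $(\ref{eqLeMm})$, uniformly for $\kappa_i=O(\sqrt N)$; the relative error $O(\sum_i\kappa_i^{3/2}/N)=O(k^{3/2}/n)=o(1)$ is harmless here. Substituting this and resumming over $s$ and the $p_t$ is the decisive step. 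As the sum is dominated by $\sum_t p_t=O(1)$, so $s\sim k$, the matching combinatorics $\tfrac{k!}{s!\prod p_t!}\approx k^{2\sum p_t}/\prod p_t!$ turn the paired blocks into a factor $\exp(c_1k^2/n)$ capturing the overlap correlation, while the correction $\exp(\tfrac1{2N}\langle\pt{\kappa},\Sigma\pt{\kappa}\rangle)$, evaluated at the dominant $\pt{\kappa}\approx(k,0,\dots)$, contributes a further $\exp(c_2k^2/n)$ encoding the base fluctuation of the $4$-gon count (the cross terms $\Sigma_{0t}\,sp_t$ are $O(1/\sqrt n)$ and drop out). Collecting the linear-in-$k$ prefactor into $A_n=\Theta(n)$ and all quadratic terms into $B_n=\Theta(1/n)$, one arrives at
\begin{equation*}
    \E[(X_n)_k]=A_n^{\,k}\,e^{B_nk^2}\bigl(1+o(1)\bigr)
\end{equation*}
uniformly for $k=\Theta(\sqrt n)$. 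Setting $\mu_n:=A_n$ and $\sigma_n^2:=\mu_n+2B_n\mu_n^2$ rewrites this as $(\ref{eqthGW})$; since $\mu_n,\sigma_n^2=\Theta(n)$, the growth conditions $\mu_n\to\infty$, $\sigma_n\log^2\sigma_n=o(\mu_n)$ and $\mu_n=o(\sigma_n^3)$ all hold, and the admissible range $c\mu_n/\sigma_n\le k\le c'\mu_n/\sigma_n$ is exactly $k=\Theta(\sqrt n)$, compatible with Lemma~\ref{LeMm}. Theorem~\ref{thm:gao_wormald} then yields the central limit theorem, with $\E[X_n]\sim\mu n$ and $\V[X_n]\sim\sigma^2n$.

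The main obstacle will be the bookkeeping of the overlap structure together with the exponent matching: one must enumerate the koala intersection types correctly, attach to each the right collapsed face, refill multiplicity and edge shift $d_t$, and then verify that the resummation fuses the two sources of quadratic-in-$k$ contributions — the overlap pairs and the Lemma~\ref{LeMm} correlation correction of the face counts — into a single clean factor $e^{B_nk^2}$. Ensuring $B_n=\Theta(1/n)$ (so that $\sigma_n^2=\Theta(n)$ and the self-intersections do not overwhelm the Gaussian scaling) is where the argument is most delicate; the remaining steps are routine once the intersection types and their generating functions are in hand.
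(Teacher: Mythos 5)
Your proposal follows essentially the same route as the paper's proof: reduction via Lemma~\ref{lem:S1} to configurations where labelled koalas intersect at most pairwise, collapse of isolated koalas and intersecting pairs to marked faces with per-type edge shifts and refill multiplicities, the quasi-power law and Lemma~\ref{LeMm} for joint face-count factorial moments, resummation over the cluster statistics, and finally Theorem~\ref{thm:gao_wormald}. The only slips are cosmetic and are absorbed by your per-type constants: the union of two intersecting koalas need \emph{not} have a simple boundary (the post-deletion faces have the more general boundary shapes the paper handles via the $P_\ell$- and $S_j$-decompositions), and for two intersection types one interior edge must be retained to keep the map connected, which is why the paper's edge shifts $d_i$ are not all equal.
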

Expanding on our comment in Section 1, the classical approach to counting maps with a given number of koala patterns would define counting variables one for each way that occurrences can intersect. Since they can intersect in arbitrarily large overlapping clusters, this would require an unbounded number of counting variables. However, Theorem \ref{thm:gao_wormald} offers a workaround as, to compute the factorial moments of order $k = \Theta(\mu_n/\sigma_n)$ we only need to compute the number of maps with $k$ selected pattern occurrences which are labelled from $1$ to $k$. The advantage of this approach is that, although the selected pattern occurrences may still overlap in large clusters, it turns out that we may ignore the cases where three or more of the $k$ selected occurrences form a cluster. Our general strategy is to reduce the problem of counting pattern occurrences to a face count problem. For example, the first factorial moment, that is the expectation, equals
\[
    \E[X_n] = \frac{m_{n,1}^\circ}{m_n}
\]
where $m_{n,1}^\circ$ is the number of maps with $n$ edges and one marked koala occurrence. This number in turn is equal to twice the number of maps with $n-2$ edges and one marked and labelled simple $4$-gon since we can reinsert the two deleted interior edges in two distinct ways (see Figure~\ref{fig:exp}). We denote the number of maps with $n-2$ edges and one marked and labelled simple $4$-gon by $\tilde{m}^\circ_{n-2,1}$. 
It follows that
\[
    \E[X_n] = \frac{m_{n,1}^\circ}{m_n} = \frac{2\tilde{m}_{n-2,1}^\circ}{m_n}.
\]
In~\cite{Yu} generating functions with variables counting simple faces have been established and in~\cite{DrmotaNoyYu} they have been proven to satisfy a central limit theorem. In particular, it is a direct consequence that the probability generating function of simple face counts satisfies~(\ref{eq:power}) such that we can compute the expectation which equals $\frac{\tilde{m}_{n-2,1}^\circ}{m_{n-2}}$ by the given formulas~(\ref{eq:exp_var_uni}). 

For the $k$-th factorial moments, the computations are slightly more involved. The general strategy to reduce the problem to counting faces is the same as for the expectation though.
\begin{figure}
    \centering
    \includegraphics[width = 0.9\textwidth]{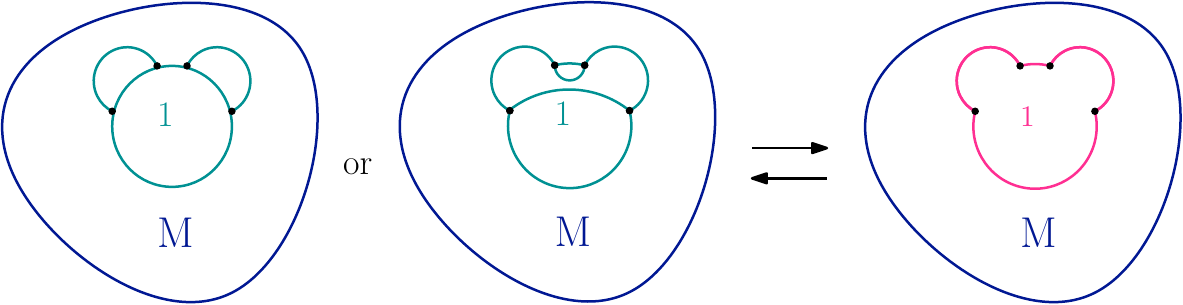}
    \caption{Reducing a koala count to a face count}
    \label{fig:exp}
\end{figure}

We first compute exact asymptotics for $\mu_n$ and $\sigma_n$ in this way so that we can conclude $\mu_n/\sigma_n = \Theta(\sqrt{n})$. It then follows by Lemma \ref{lem:S1} that the $k$-th factorial moment satisfies
\begin{equation}\label{eq:kthmom2}
    \frac{m_{n,k}^{\circ,\times} }{m_{n}}\quad \leq \quad \E\left[(X_n)_k\right] = \frac{m^{\circ}_{n,k}}{m_n} \quad \leq \quad   \frac{m_{n,k}^{\circ,\times}}{m_{n}}+\left(\frac{\mu_n}{2}\right)^k + o\Big( \E\left[(X_n)_k\right]\Big)
\end{equation}
where $m^\circ_{n,k}$ is the number of maps with $n$ edges and $k$ selected pattern occurrences (among arbitrarily many occurrences) which are labelled from $1$ to $k$  and $m^{\circ,\times}_{n,k}$ is the number of all such maps with at least $\mu_n/2$ pattern occurrences in total and where each labelled pattern occurrence intersects at most one other labelled occurrence of a koala. Since $m^{\circ,\times}_{n,k}$ includes the number of maps where no labelled koala intersects another labelled koala, which  asymptotically equals $(\mu_n)^km_n$, the main asymptotics of $\E\left[(X_n)_k\right]$ with $k = \Theta(\sqrt{n})$ as $n\rightarrow \infty$ are the same as $\frac{m_{n,k}^{\circ,\times} }{m_{n}}$. So our main focus is on labelled pattern occurrences that intersect at most pairwise.

The first step to compute $m_{n,k}^{\circ,\times}$ for $k\geq 2$ is to list all possible ways that two pattern occurrences can intersect (see Figure \ref{fig:intersects}). 

\begin{figure}
    \centering
    \includegraphics[width = 0.9\textwidth, page=6]{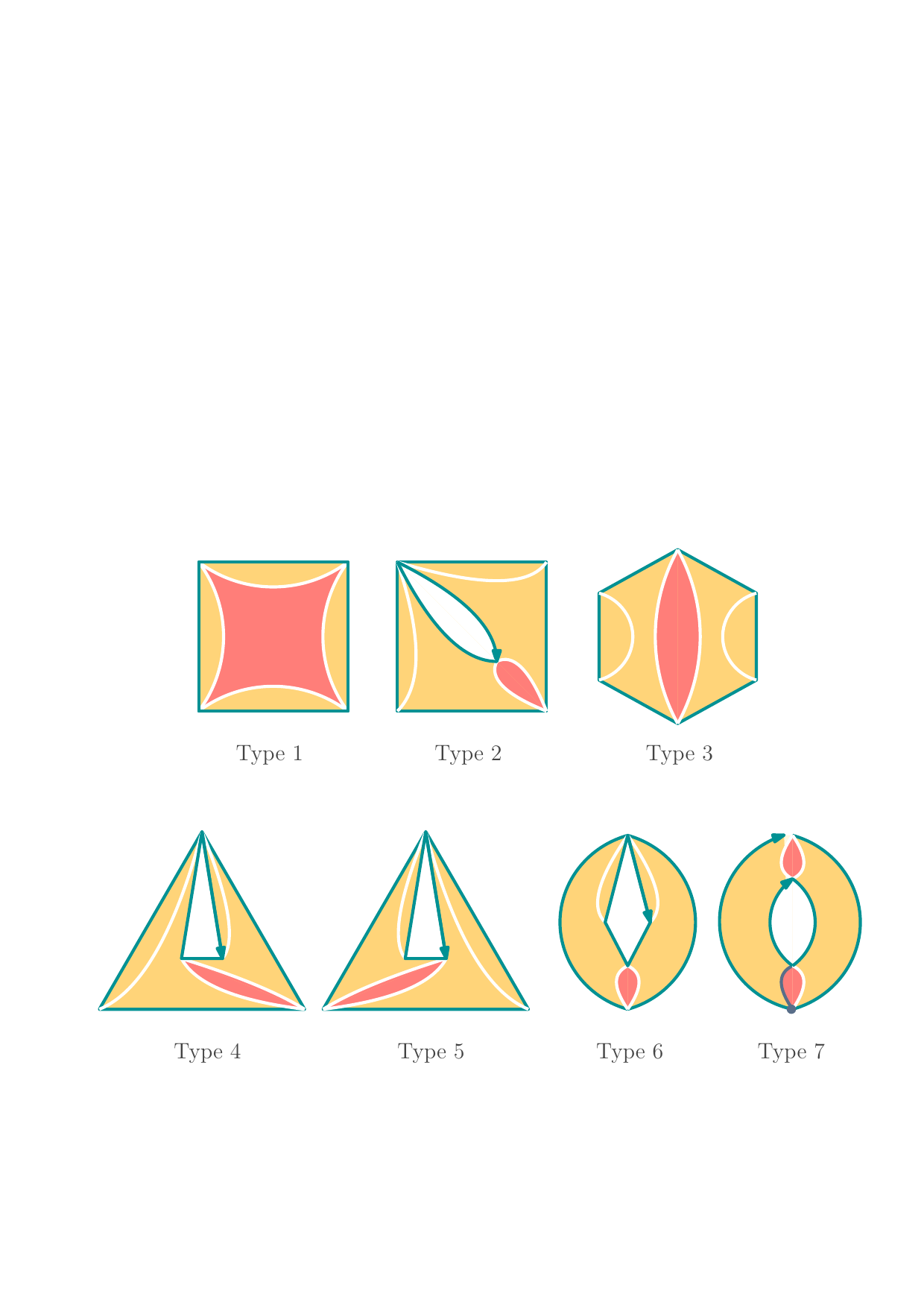}
    \caption{The intersection types of intersecting pairs of koalas. The edges of one koala are blue, the edges of the other koala occurrence are pink. For each one, the  various rooted intersection types are indicated by arrows. After deletion of interior edges, the interior faces of the two occurrences have merged into one face shaded yellow, and there are possibly some other interior faces.  The arrows show the  various rotations of this map.}
    \label{fig:intersects} 
\end{figure}

\begin{df}[Intersection types, rotations and proper embeddings]    
   Two pattern occurrences {\em intersect} if they contain at least one face in common.  Let $\map{p}$ and $\map{r}$ be rooted planar maps and let $\map{r}$ contain two distinguished 
   pattern occurrences $p_1$ and $p_2$ of $\map{p}$ which intersect. Further, suppose that each edge and vertex of $\map{r}$ is an edge or vertex of $p_1$ or $p_2$. Then $\map{r}$ is called a \emph{rooted intersection type} of the pattern $\map{p}$.  
   
\noindent    If  a rooted  map  $\map{n}$ can be obtained by rerooting $\map{r}$ such that the root face is preserved, then $\map{n}$ is called a {\em rotation} of $\map{r}$. 
    
\noindent    An \emph{intersection type} $i$ of $\map{p}$ is the set of all rotations of a rooted intersection type $\map{r}$ of $\map{p}$. We further denote the number of pairwise non-isomorphic rotations of $\map{r}$ by $r_i$, where isomorphisms must preserve $\{p_1,p_2\}$. That is, the intersection type $i$ contains $r_i$ distinct rooted maps with an unordered pair of occurences of $\map{p}$ distinguished. 

\noindent Given two intersecting  occurrences $p_1$ and $p_2$  of the pattern $\map{p}$ in an arbitrary map $\map{m}$, we can obtain a rooted intersection type of $\map{p}$ by deleting all edges of $\map{m}$ except those in $\map{p}_1$ or  $\map{p}_2$, and assigning a root on the boundary of the (new) face that contains the root face of $\map{m}$. This essentially defines an embedding of the rooted intersection type into $\map{m}$. The intersection type does not depend on which root was chosen on the boundary. If it is type $i$, we say that {\em the intersecting pair of occurrences, $p_1$ and $p_2$,  has intersection type~$i$}.
Note  that  a rooted intersection type can have interior faces that are not interior to either of the two occurrences of $\map{p}$. We call these {\em deep} faces. By a {\em proper embedding} of a rooted intersection type $\map{r}$ in a map $\map{m}$, we mean a subset of the vertices and edges of $\map{m}$ with a designated root, that induce a rooted map isomorphic to $\map{r}$, such that only its root face and the deep faces contain any other vertices or edges of $\map{m}$. 
\end{df}
\smallskip

The koala has sixteen different intersection types as illustrated in Figure~\ref{fig:intersects}. The arrows indicate possible rootings of the intersection type. Type 1 contains one rooted intersection type, Type 2 contains three different rooted intersections types, and so on.   

For any intersecting pair of koala occurrences in a given map, we aim to delete the interior edges.
Depending on the intersection type however, the deletion of \emph{all} of the interior edges can disconnect the map. This occurs only in Types 9 and 11 in Figure \ref{fig:intersects}.
Technically, our proof could be adapted to cope with this situation by using generating functions for the disconnected parts. However, the analysis of the generating functions would become more technical and less intuitive.
We find it much more convenient to keep the map connected by retaining a minimal number of interior edges, depending on the intersection type. Consequently, we define that for intersection types 9 and 11 we retain  one interior edge to reconnect the disconnected edge (and $S_2$-map, respectively) with the boundary. Figure \ref{fig:intersects} uniquely defines  the edges to be deleted for each rooted intersection type. The precise deletion process is defined below. When these deletions are applied to a rooted intersection type, which has no edges inside its deep faces, we call the resulting rooted map the {\em post-deletion  map}.  The interior faces of the intersecting patterns, together with the deleted edges, have merged to form a face (shaded with yellow in the figure) which we call the {\em deletion face} of the post-deletion map. 

Proper embeddings of post-deletion maps are defined analogously to those of rooted intersection types; again, the deep faces and the root face of the embedded map may contain other edges and vertices.

In Figure~\ref{fig:intersects}, the green arrows indicate the root edges of   various   rotations of the   post-deletion map. The fact that the number of these does not  necessarily coincide with the number of rotations of the original intersection type  will be important in the counting procedure described below. 

Our approach requires counting the maps that are produced after the above-mentioned deletions, and for this we introduce additional variables $x_1,x_2,\dots,x_{13}$ which mark the different types of faces  which can result from deleting those interior edges. Note that the type of the deletion face does not depend on which particular rotation of the rooted intersection type is used.  Here the `type' of a face refers to the graph structure of its boundary, which in particular determines its valency. However, some types of faces can be produced from more than one intersection type after the deletions; for instance,   both intersection types 9 and 10   produce the same type of face. In all, we have nine types of face to deal with. To economise on the number of variables, we define a surjective function $t: [16] \rightarrow [13]$  such that intersection type $i$ creates a face of type $t(i)$ under the above deletion process. The definition of $t$ is implied by Figure~\ref{fig:intersects}; for example $t(9)=t(10)=8$.

For our argument, we will apply the deletions to a proper embedding of a rooted intersection type while retaining the contents of its deep faces (and root face). To make things precise, and also for later use with arbitrary patterns, we make the following definition.  

\begin{df}[Deletion procedure.]\label{def:deleting} 
 For each intersection type of a pattern $\map{p}$, we define a canonical member, i.e.\ a rooted intersection type $\map{r}$, and define a set   $D(\map{r})$ of edges, such that each is an interior edge  of one of the two intersecting occurrences of $\map{p}$ in $\map{r}$, and such that the  deletion of $D(\map{r})$ retains connectedness of the map, and $D(\map{r})$ is  maximal in this sense. The first step of the deletion procedure consists of deleting the edges in  $D(\map{r})$ (i.e.\ merging them with their incident faces), to form a new rooted map $\map{r}'$. 
The second step of the procedure can  be performed in $c$ different ways, where $c$ is the number of nonisomorphic rotations of $\map{r}'$. One alternative is to do nothing in the second step, and the others consist of moving the root edge of $\map{r}'$ to any of the next $c-1$ possible locations around the root face, working clockwise.  This finishes the definition for the canonical rooted intersection type $\map{r}$. To define the procedure for the    other  rotations  of $\map{r}$, for $1\le i\le r-1$ let $\map{r}_i$ denote the rotation obtained by moving the root   of $\map{r}$ just $i$ positions clockwise around the root face.  The deletion procedure first moves the root of  $\map{r}_i$  just $i$ positions counter-clockwise around its root face (which produces a copy of $\map{r}$) and then applies the two steps described above. Thus, these $r$ different rotations produce exactly the same set of maps, as in Figure~\ref{FigX}. (Here $\map{r}'$ has only one rotation.)
\begin{figure}
\includegraphics[width=\textwidth]{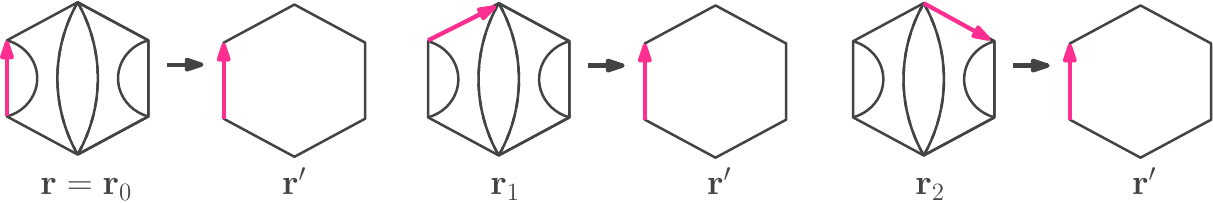}
\caption{The deletion procedure applied to these three rotations $\map{r}_0,\map{r}_1,\map{r}_2$ of a map $\map{r}$ produces exactly the same map in each case.}
\label{FigX}
\end{figure}

\end{df}

With the appropriate selection of $\map{r}$ and $D(\map{r})$, the deletion procedure for all the different types of intersecting koalas is illustrated in Figure~\ref{fig:intersects}. For each type, the rooted intersection types which are represented by an arrow on the left can produce the post-deletion map on the right with any of the indicated rootings.

Let $c_{t(i)}$ denote the number of rotations of a post-deletion map for an intersection pattern of type $i$.
The numbers $r_i$ and $c_{t(i)}$ do not always coincide, because deleting edges may create or destroy symmetries. For example, in Figure~\ref{fig:intersects}, the maps associated to the rotations in the intersection types are drawn in green and yellow. For intersection type $2$, the number of rotations is $r_2 = 3$ and the number of rotations of the associated map is just $c_{t(2)} = 1$.
A full list of rotations of each intersection type and the  map associated to it is given in Table~\ref{tab:intersects}. We need to keep track of the number of rotations  for our  counting procedure. 
  
A rooted koala has two rotations, but for the basic counting we identify these as one {\em koala occurrence}.  Consider a map $\map{m}$ with $k$ selected koala occurrences   which are labelled from $1$ to $k$ and intersect at most pairwise. We
\begin{enumerate}
    \item unlabel the $k$ selected koalas, 
    \item delete their interior edges according to Definition~\ref{def:deleting} and
    \item distinguish the resulting post-deletion faces.
\end{enumerate} 
We denote the map so obtained by $\tilde{\map{m}}$. Our job is made more difficult by the fact that when we count maps with distinguished faces,  $\map{m}$ will not be the only map that results in $\tilde{\map{m}}$ by this procedure. The overcounting is by a simple factor of 2 in the case of an isolated koala, but the case of intersecting koalas is more complicated. The next result quantifies this overcounting for a single intersecting pair of koalas. Let $[I]=\{1,\ldots, I\}$ denote the set of intersection types.

For later use, we state and proof the result for patterns in general.

\begin{lemma}\label{prop:fact}
    Given any pattern $m_i$ be the number of rooted maps with $n$ edges and two distinguished pattern occurrences with intersection type $i \in [I]$. Further, let $d_i$ be the number of edges which we delete in intersection type $i$ and $\tilde{m}_{i}$ be the number of maps with $n-d_i$ edges and with one distinguished face associated to intersection type $i$. Then
    \[
        m_i = \frac{r_i}{c_{t(i)}}\tilde{m}_{i},
    \]
    where $r_i$ is the number of rotations of intersection type $i$ and $c_{t(i)}$ the number rotations of each post-deletion map for intersection type $i$. 
    \end{lemma}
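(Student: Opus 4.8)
The plan is to set up a root-preserving version of the deletion procedure of Definition~\ref{def:deleting} as an honest (single-valued) map $\Phi$ and to show that all of its fibres have the same cardinality $\nu_i$, so that $m_i=\nu_i\,\tilde m_i$; the work then reduces to identifying $\nu_i=r_i/c_{t(i)}$. Fix $i\in[I]$ with canonical rooted intersection type $\map{r}$, post-deletion map $\map{r}'$, and deletion set $D(\map{r})$ of size $d_i$. Let $\mathcal{M}_i$ be the set of rooted maps on $n$ edges carrying a distinguished unordered intersecting pair $\{p_1,p_2\}$ of type $i$ (so $|\mathcal{M}_i|=m_i$) and let $\widetilde{\mathcal{M}}_i$ be the set of rooted maps on $n-d_i$ edges with one distinguished face associated to type $i$ (so $|\widetilde{\mathcal{M}}_i|=\tilde m_i$). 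I would define $\Phi$ on $(\map{m},\{p_1,p_2\})$ by deleting the images of the edges of $D(\map{r})$, \emph{keeping the original root of} $\map{m}$ (which is legitimate, since an interior edge of an occurrence is never incident with the root face and hence is never the root edge), and distinguishing the resulting deletion face $F$, which is of type $t(i)$. This $\Phi$ lands in $\widetilde{\mathcal{M}}_i$ and realises the deletions of Definition~\ref{def:deleting}; the repositioning of the root described there is what I instead absorb into the fibre count.

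First I would check that $\Phi$ is surjective and determine its fibres. Given $(\tilde{\map{m}},F)\in\widetilde{\mathcal{M}}_i$, a preimage is obtained by re-inserting the interior of $\map{r}$ into the face $F$ so as to recreate an intersecting pair of type $i$; conversely every preimage arises this way. Two distinct re-insertions yield non-isomorphic elements of $\mathcal{M}_i$: outside $F$ the map $\tilde{\map{m}}$ is untouched and, being rooted, is rigid, so no automorphism can identify them. Moreover the number of admissible re-insertions depends only on the cyclic boundary of $F$ and on how the deep faces are attached along it, data fixed by the type $i$, so in particular it does not depend on what the deep faces or the root face of $\tilde{\map{m}}$ contain. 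Hence each fibre has the same size $\nu_i$, and $m_i=\nu_i\,\tilde m_i$.

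The crux, which I expect to be the main obstacle, is to prove $\nu_i=r_i/c_{t(i)}$, and I would phrase this through automorphism groups so that the bookkeeping built into Definition~\ref{def:deleting} becomes transparent. Let $s$ be the number of edge-sides on the boundary of the root face; deleting interior edges does not change this boundary, so $s$ is common to $\map{r}$ and $\map{r}'$. Let $G$ be the group of automorphisms of the underlying map of $\map{r}$ preserving the root face and the pair $\{p_1,p_2\}$, and $G'$ the group of automorphisms of $\map{r}'$ preserving the root face and the deletion face, that is, exactly the symmetries used to count $r_i$ and $c_{t(i)}$. Since a rooted map is rigid, both groups act freely on the $s$ rooting positions, giving $r_i=s/|G|$ and $c_{t(i)}=s/|G'|$. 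Because deleting the canonical set $D(\map{r})$ is $G$-equivariant and only merges faces inside the deletion face, one has $G\le G'$, and I would show that the set of re-insertions of the interior of $\map{r}$ into $F$ is a single $G'$-orbit with stabiliser $G$ (an automorphism of $\map{r}'$ fixes a re-insertion precisely when it extends to an automorphism of $\map{r}$ preserving the pair). The orbit--stabiliser relation then gives $\nu_i=[G':G]=|G'|/|G|=r_i/c_{t(i)}$, and combined with $m_i=\nu_i\,\tilde m_i$ this is the claim.

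The delicate points to verify carefully are exactly those in the preceding paragraph: the freeness of the two actions (hence the index formulas for $r_i$ and $c_{t(i)}$), the inclusion $G\le G'$ together with the transitivity-and-stabiliser description of the re-insertions, and the compatibility with deep faces. For the latter I would note that any automorphism permuting deep faces lies in $G$ and in $G'$ simultaneously, so that such symmetries cancel in the ratio $|G'|/|G|$; this is why the count $\nu_i$ obtained in the rigid ambient map $\tilde{\map{m}}$ agrees with the purely local index $[G':G]$, irrespective of the (arbitrary) contents of the deep faces.
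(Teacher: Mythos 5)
There is a genuine gap, and it is located exactly where you placed the crux. Your scheme requires a single-valued map $\Phi$ that is surjective with constant fibre cardinality $\nu_i$, and then identifies $\nu_i=[G':G]$; but for the koala intersection types $9$ and $11$ the paper has $r_i=1$ and $c_{t(i)}=2$ (Table~\ref{tab:intersects}), so the lemma asserts $m_i=\tfrac12\tilde m_i$. No surjection with constant integer fibre size can produce a ratio $\tfrac12$, so at least one of your claims must fail in these cases --- and in fact two do. These are precisely the types where deleting \emph{all} interior edges would disconnect the map, so $D(\map{r})$ retains one of two symmetric interior edges; consequently $D(\map{r})$ is \emph{not} invariant under the automorphism group $G$ of $\map{r}$, deletion is not $G$-equivariant, and ``the images of the edges of $D(\map{r})$'' is ambiguous: the two isomorphisms from $\map{r}$ to the embedded copy transport $D(\map{r})$ to two \emph{different} edge sets of $\map{m}$, generically yielding non-isomorphic post-deletion maps, so your $\Phi$ is not well defined. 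For the same reason the inclusion $G\le G'$ fails there: deletion can destroy symmetries as well as create them, and with your own formulas $|G|=s/r_i=s$ while $|G'|=s/c_{t(i)}=s/2$, so $G$ is strictly larger than $G'$ and the orbit--stabiliser identification $\nu_i=[G':G]$ is meaningless. Your argument is sound only in the symmetry-creating direction ($G\le G'$, e.g.\ type $2$ with $r_2/c_{t(2)}=3$), which is why the difficulty is invisible in those cases.

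The paper avoids defining a function altogether and instead double counts an incidence relation on \emph{doubly rooted} maps: $A_n(i)$ consists of maps with a secondary root marking a proper embedding of a rooted intersection type, $B_n(t(i))$ of maps with a secondary root marking a properly embedded post-deletion map. The secondary root pins down the isomorphism along which $D(\map{r})$ is transported (restoring well-definedness), and the deletion procedure of Definition~\ref{def:deleting} is multi-valued by design: each element of $A_n(i)$ produces exactly $c_{t(i)}$ elements of $B_n(t(i))$, while each element of $B_n(t(i))$ arises from exactly $r_i$ elements of $A_n(i)$. This gives $c_{t(i)}|A_n(i)|=r_i|B_n(t(i))|$, and combined with $|A_n(i)|=km_i$, $|B_n(t(i))|=k\tilde m_i$ ($k$ the common root-face valency, unchanged by deletion) yields the lemma uniformly in both the symmetry-creating and symmetry-breaking cases. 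If you wish to keep your group-theoretic formulation, the correct statement is $m_i/\tilde m_i=|G'|/|G|$ with no containment between $G$ and $G'$ assumed, but proving it still requires a two-sided incidence count of this kind rather than a fibre count of a single-valued map.
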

\begin{proof}
    To facilitate counting, we consider {\em doubly rooted} maps, which are rooted maps having a secondary root specified, besides the root of the map.
  
    Let $A_n(i)$ be the set of all   doubly rooted     $n$-edged maps  $\map{m}$ in which the secondary root is the root of a proper embedding  of a rooted intersection of type $i$ 
    in  $\map{m}$. 
    Let  $B_n(j)$ denote the set of all doubly rooted $n$-edged maps in which the secondary root is the  root  of a proper embedding  of a post-deletion map in which the deletion face is of type $j$.  The deletion procedure can be applied to  each element of $A_n(i)$ by applying it to the rooted intersecting pair having the secondary root. Given the root edge of an intersection map its embedding is uniquely determined by the incidences of the interior faces with the edges of the pattern because its boundary is simple. The second step of this procedure moves the  secondary root to any one of $c_{t(i)}$ positions, each giving a different element  of $B_n(t(i))$, and the number of elements of  $A_n(i)$ taken to one of $B_n(t(i))$ is $r(i)$. 
    Hence the total number of ways that the deletion procedure can be performed to members of  $A_n(i)$ is 
    $$ c_{t(i)}|A_n(i)|= r(i)|B_n(t(i))|.
    $$
    If $k$ is the valency of the root face of a rooted intersection map of type $i$, then clearly $|A_n(i)|=km_i$ since there are $k$ ways to apply a secondary root to the intersection type occurrence. On the other hand, for similar reasons, $B_n(t(i))=k\tilde m_i$. The lemma follows.
\end{proof}

We return to the koala example. A more involved example of overlapping patterns would be illustrated in Figure~\ref{fig:count}, where we want to determine the number $M$ of maps where $5$ koalas are selected and labelled and one of these koalas does not intersect with any other selected koalas, two form intersection type 9, the other two have intersection type 10. According to the counting procedure we unlabel the koala occurrences and delete the interior edges as defined in Figure~\ref{fig:intersects}. The result is a rooted map with one distinguished simple $4$-gon and two distinguished simple $2$-gons with a path of length two attached at one of its vertices.
In order to keep track of how many maps with a distinguished single koala, a distinguished intersection type 9 and a distinguished intersection type 10 reduce to the same map with these three distinguished faces, we go step by step. First, each of the $2$-gons with the paths attached at a vertex may rechosen as resulting from an intersection type 10. Once we choose this face we can apply Proposition~\ref{prop:fact} and conclude there are $r_{10}/c_{t(10)}$ maps with one distinguished simple $4$-gon, one distinguished simple $2$-gon with an attached path of length two and one distinguished intersection type $10$ which reduce to this map. Another application of Proposition~\ref{prop:fact} yields that there are $r_9/c_{t(9)}$ maps with one marked simple $4$-gon, a marked intersection type 9 and a marked intersection type 10 which reduce to one of the previous maps. And finally there are two ways to reinsert the interior edges of the single koala.
Therefore we have
\[
    \frac{M}{5!} = \binom{2}{1}\cdot2\cdot\frac{r_9}{c_{t(9)}}\cdot \frac{r_{10}}{c_{t(10)}}\cdot m_{n,1,0,0,0,0,0,0,2,0} = 2\cdot m_{n,1,0,0,0,0,0,0,2,0} 
\]
where $m_{n,1,0,0,0,0,0,0,2,0}$ is the number of maps with one distinguished simple $4$-gon and two distinguished simple $2$-gons with a path of length two attached at one of its vertices.

\begin{figure}[t]
 \centering
    \includegraphics[width=0.32\textwidth, page=2]{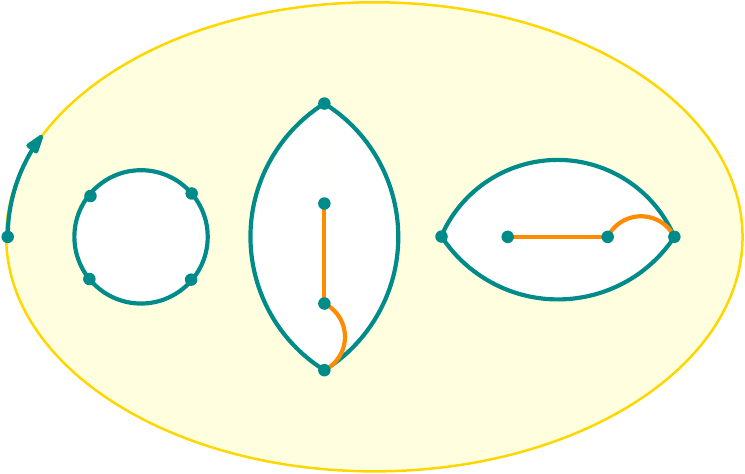} \hspace{2cm}\includegraphics[width=0.32\textwidth, page=1]{count_map.pdf}
    \caption{\textbf{Reducing pattern counts to face counts.}\\ We unlabel the five patterns and delete interior edges according to Definition~\ref{def:deleting}.}
    \label{fig:count}
\end{figure}

\subsection{Generating functions of koalas} 

Now, the main focus concerns the counting of maps with distinguished faces via generating functions. These distinguished faces are characterised by their boundary.
\begin{df}[Boundary shape]\label{def:bound}
    Let $F$ be a face of valency $\ell$, root vertex $v_1$ and root edge $(v_1,v_2)$. Its boundary is given by a sequence of vertices $(v_1,v_2,\dots, v_1)$ and can be decomposed into the boundary of a simple $h$-gon $(v_1,v_2,\dots,v_h, v_1)$ and boundaries of (possibly trivial) maps $\map{m}_1$, $\map{m}_2, \dots, \map{m}_h$ which are attached at the vertices $v_1,v_2,\dots, v_h$ respectively. The boundaries of the maps $\map{m}_i, i=1,\dots h$ in turn can be decomposed into $e_i$ bridges and boundaries of simple maps. 
    
    We define the \emph{boundary shape} of $F$ to be the vector $(h, e, s_1, s_2, s_3, \dots)$, where $e=\sum_{i=1}^h e_i$ and $s_i$ is the total number of boundaries of $S_i$-maps appearing in the boundary decomposition above.
\end{df}

For example, the boundary of the distinguished face in intersection type 7 of koalas is given by $(v_1,v_2,v_3,v_4,v_5,v_2,v_1)$ and it decomposes into the boundary of the simple $2$-gon $(v_1,v_2,v_1)$ and the boundary of an $S_4$ map dangling off $v_2$, which is given by $(v_2,v_3,v_4,v_5,v_2)$.
The boundary shapes of the distinguished faces of all rotations in an intersection type are identical. They are given in Table~\ref{tab:intersects}.
\begin{table}
    \begin{tabular}{c | c | c | c }
        Intersection type & Boundary shape &  Pattern rotations & Face rotations \\
        \hline 
         1 & $(4,0,0,0,\dots)$ & $r_1=1$ &  $c_{t(1)} = 1$\\
         2 & $(6,0,0,0,0,0,0,\dots)$ &  $r_2=3$  & $c_{t(2)} = 1$\\
         3& $(3,0,0,0,1,0,\dots)$ &  $r_3=3$ &  $c_{t(3)} = 3$\\
         4& $(3,0,0,0,1,0,\dots)$  & $r_4=3$ &  $c_{t(4)} = 3$\\
         5 & $(4,0,0,0,1,0,\dots)$  & $r_5=4$ &  $c_{t(5)}=4$\\
         6 & $(4,1,0,\dots)$  & $r_6=4$ &  $c_{t(6)} = 4$\\
         7 & $(2,0,0,0,0,1,0,\dots)$  & $r_7=2$ &  $c_{t(7)} = 2$\\
         8 &$(2,1,0,1,0\dots)$  & $r_8=2$  & $c_{t(8)} = 2$\\
         9 &$(2,2,0,\dots)$  & $r_9=1$ &  $c_9 = 2$
         \\
         10 &$(2,2,0,\dots)$  & $r_{10}=2$ &  $c_{t(10)} = 2$
         \\
         11 &$(2,1,0,1,0,\dots)$  & $r_{11}=1$ &  $c_{t(11)} = 2$
         \\
         12 &$(2,1,0,1,0,\dots)$  & $r_{12}=2$ &  $c_{t(12)} = 2$
         \\
         13 &$(2,0,0,2,0,\dots)$  & $r_{13}=2$ &  $c_{t(13)} = 2$
         \\
         14 &$(2,0,0,2,0,\dots)$  & $r_{14}=1$ &  $c_{t(14)} = 1$
         \\
         15 &$(2,2,0,0,0,\dots)$  & $r_{15}=1$ &  $c_{t(15)} = 1$
         \\
         16 &$(2,1,0,1,0,\dots)$  & $r_{16}=2$ &  $c_{t(16)} = 2$.
    \end{tabular}
    \caption{Boundary shapes, rotations and associated faces to the intersection types of koalas}\label{tab:intersects}
\end{table}   

Once we have identified the boundary shapes, the number of rotations and associated maps of all intersection types, we can set up a functional equation describing the generating function of maps with additional variables which count the associated distinguished faces. 
Subsequently, we use analytic tools to determine the asymptotic growth of the desired number of maps.

\begin{obs}\label{obs:shape} It is enough to know the boundary shape $(h, e, s_{1}, s_{2}, s_{3},\dots)$ and the number $c$ of rotations of the associated face of an intersection type to describe the generating function of rooted planar maps where the root edge is incident to a face associated to intersection type in terms of the generating functions $P_i$ and $S_i$.

In particular, their generating function equals
\[
    czu^{2-h}P_{h-1}(z,u)z^{e}S_1(z)^{s_{1}}S_2(z)^{s_{2}}S_3(z)^{s_{3}}\cdots,
\]
where $P_i(z,u)$ is the generating function of $P_i$-maps and $S_i(z)$ is the generating function of $S_i$-maps.\end{obs}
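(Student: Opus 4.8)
The plan is to decompose a rooted map according to what the root edge is incident to, and to recognise that the face associated to an intersection type of boundary shape $(h,e,s_1,s_2,s_3,\dots)$ has a completely rigid local structure that we can rebuild from the already-enumerated building blocks $P_i$ and $S_i$. First I would fix an intersection type with boundary shape $(h,e,s_1,s_2,\dots)$ and associated-face rotation count $c$, and count exactly those rooted maps in which the root edge lies on the boundary of an occurrence of this associated face. By Definition~\ref{def:bound}, the boundary of such a face is obtained from a simple $h$-gon $(v_1,\dots,v_h,v_1)$ by dangling, at its vertices, $e$ bridges together with the boundaries of $s_i$ many $S_i$-maps for each $i$. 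The key observation is that this dangling structure is determined up to the generating-function level: the edges strictly inside the $h$-gon (the deep face) are free, while everything hanging off the boundary is a product of independently chosen $S_i$-maps and bridges.

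Next I would account for the three multiplicative contributions separately. The interior of the associated face — that is, the region bounded by the simple $h$-gon together with whatever map fills it — is exactly a map with \emph{partial} simple boundary of length $h-1$ along the $h$-gon, so its contribution is governed by $P_{h-1}(z,u)$; the shift $u^{2-h}$ corrects the exponent of the catalytic variable $u$ so that $u$ continues to mark the root face valency of the \emph{whole} map rather than of the partial-boundary piece, matching the convention used for the $M_i$-maps in Lemma~\ref{lem:simple} and Lemma~\ref{lem:Yu}. The $e$ bridges hanging off the boundary each contribute a single edge, giving the factor $z^e$. Finally, each $S_i$-map dangling off a boundary vertex contributes an independent factor $S_i(z)$, so the $s_i$ such maps together contribute $S_i(z)^{s_i}$, and taking the product over all $i$ yields $S_1(z)^{s_1}S_2(z)^{s_2}S_3(z)^{s_3}\cdots$. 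Since loops and multiple edges are permitted and the decomposition along the boundary is into edge-disjoint, independently chosen pieces, these generating functions simply multiply.

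The remaining factor is the combinatorial multiplicity $c$. Here I would invoke precisely the point already isolated in Definition~\ref{def:deleting} and the surrounding discussion: the associated (post-deletion) face can be rooted in $c = c_{t(i)}$ distinct, pairwise non-isomorphic ways around its boundary, and each such rooting produces a genuinely different rooted map where the root edge is incident to the face. Because (by the remark preceding Observation~\ref{obs:shape}) the boundary \emph{shape}, and in particular $c$, does not depend on which rotation of the rooted intersection type is used, the count is cleanly the single factor $c$ times the product of building-block generating functions. Collecting the four factors gives exactly
\[
    c\,z\,u^{2-h}\,P_{h-1}(z,u)\,z^{e}\,S_1(z)^{s_1}S_2(z)^{s_2}S_3(z)^{s_3}\cdots,
\]
where the extra factor $z$ accounts for the root edge itself, which is incident to the deletion face but is not one of the $e$ dangling bridges nor part of the $S_i$-maps.

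I expect the main subtlety — though not a deep obstacle — to be bookkeeping of the catalytic variable $u$ and of the lone factor of $z$ for the root edge: one must check that the exponent $2-h$ of $u$, combined with the $u$-dependence already inside $P_{h-1}(z,u)$, reproduces the correct root-face valency of the ambient map, and that no edge is double-counted between the ``root edge'' factor $z$ and the contributions of $P_{h-1}$, the bridges, or the $S_i$-maps. Everything else is the assertion that the boundary decomposition of Definition~\ref{def:bound} is a disjoint-union / product decomposition at the level of generating functions, which is exactly the same type of argument that produced the systems in Lemma~\ref{lem:simple} and Lemma~\ref{lem:Yu}, so it carries over verbatim.
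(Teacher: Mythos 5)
Your multiplicative bookkeeping matches the paper's proof: the factor $c$ from the pairwise non-isomorphic rotations of the associated face, the factor $z$ for the root edge, $z^{e}$ for the dangling bridges, $S_i(z)^{s_i}$ for the dangling simple maps, and the exponent shift $u^{2-h}$. However, there is a genuine error in the one step that carries the whole decomposition, namely your identification of the factor $P_{h-1}(z,u)$. You describe it as ``the interior of the associated face --- that is, the region bounded by the simple $h$-gon together with whatever map fills it.'' The associated face is an actual face of the ambient map, so its interior is empty by definition; nothing fills it, and the only objects on its side of the $h$-gon are precisely the $e$ bridges and the $s_i$ dangling $S_i$-maps that you have already accounted for separately. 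The $P_{h-1}$-factor is the \emph{exterior} piece: everything on the other side of the $h$-gon, i.e.\ the rest of the map, which contains the root face. Concretely, the root edge must be one of the $h$ edges of the $h$-gon (it separates the root face from the distinguished face); deleting it and detaching the dangling bridges and $S_i$-maps leaves a map whose root face is the old root face merged with the distinguished face, and whose boundary starts with the simple path formed by the remaining $h-1$ edges of the $h$-gon --- exactly a $P_{h-1}$-map. This is what the paper's proof means by decomposing the $h$-gon into ``the root edge and the $h-1$ edges on the boundary of a $P_{h-1}$-map.''

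This is not a naming quibble; with your identification the argument fails on exactly the two points you flag as subtleties. First, the arbitrary part of the map outside the distinguished face appears nowhere in your product, so the proposed decomposition does not account for all edges of the maps being counted. Second, the shift $u^{2-h}$ is only justified because the root face of the $P_{h-1}$-piece becomes the root face of the whole map after reinserting the root edge: its valency $m$ loses the $h-1$ path edges (which become incident to the distinguished face) and gains the root edge, giving $m-(h-1)+1 = m+2-h$, whence $u^{2-h}P_{h-1}(z,u)$. If the $P_{h-1}$-piece sat inside the distinguished face, it would be disjoint from the root face of the whole map and no fixed power of $u$ could relate the two valencies. Relatedly, your phrase ``the edges strictly inside the $h$-gon (the deep face) are free'' conflates the deletion face with the deep faces: the deep faces are the interiors of the dangling $S_i$-maps --- which is precisely why those factors are the full generating functions $S_i(z)$ rather than fixed polygons --- while the distinguished face itself contains no edges at all.
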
 
\begin{proof}
The decomposition process is analogous to the proof of Theorem~\ref{prop:nonself} and is also illustrated in Figure~\ref{fig:map_face}. If the root edge is incident to a face of boundary shape $(h, e, s_{1}, s_{2}, s_{3},\dots)$, then there are $c$ rotations of this face that could be incident to the root. By Definition~\ref{def:bound}, the boundary of the face consists of a pure $h$-gon that decomposes into the root edge and the $h-1$ edges on the boundary of a $P_{h-1}$-map and the boundary of maps which are dangling off its vertices. These maps further decompose into $s_i$ distinct $S_i$-maps, $i\geq 1$ and $e$ bridges. Combining the generating functions of all of these components, we end up with the described generating function.
\end{proof}

Analogously, we can obtain the formulas for generating functions which contain variables counting faces associated to intersection types. Let
\begin{align*}
    M(z,u,\pt{x}) &= \sum_{n,k,t_1,t_2\dots t_{13} \geq 0} m_{n,k,t_1,t_2\dots t_{13}}z^nu^kx_1^{t_1}x_2^{t_2}\cdots x_{13}^{t_{13}}\\
    M(z,1,\pt{x}) &= \sum_{n,t_1,t_2\dots t_{13} \geq 0} m_{n,t_1,t_2\dots t_{13}}z^nx_1^{t_1}x_2^{t_2}\cdots x_{13}^{t_{13}}
\end{align*}
be the generating functions of rooted planar maps with additional variables $\pt{x}=(x_1, x_2, \dots, x_{13})$, where $x_1$ counts simple $4$-gons which are associated to single patterns and for each $i\in [16]$, the variable $x_{t(i)}$ counts faces associated to intersection type $i$.

By Observation~\ref{obs:shape}, we can decompose a planar map as in Figure \ref{fig:map_face} and derive the functional equation
\begin{align}\label{sys:koalas}
    M(z,u, \pt{x}) = 1&+zu^2M(z,u,\pt{x})^ 2+zu\frac{uM(z,u,\pt{x})-M(z,1,\pt{x})}{u-1}\\
    & +(x_1-1) \frac{z}{u^2}P_3(z,u,\pt{x})
    +(x_2-1) \frac{z}{u^4}P_5(z,u,\pt{x})
    \nonumber\\
    & + (x_3-1) \frac{3z}{u}P_2(z,u,\pt{x})S_3(z,\pt{x})
    +(x_4-1) \frac{4z}{u^2}P_3(z,u,\pt{x})S_2(z,\pt{x}) \nonumber\\
    & + (x_5-1) \frac{4z^2}{u^2}P_3(z,u,\pt{x}) 
    +(x_6-1) 2zP_1(z,u,\pt{x})S_4(z,\pt{x})
    \nonumber\\
    & + (x_7-1) 2z^2P_1(z,u,\pt{x})S_2(z,\pt{x})
    +(x_8-1) 2z^3P_1(z,u,\pt{x})\nonumber\\
    &+ (x_9-1) 2z^2P_1(z,u,\pt{x})S_2(z,\pt{x}) + (x_{10}-1) 2zP_1(z,u,\pt{x})S_2(z,\pt{x})^2\nonumber\\
    &+ (x_{11}-1) zP_1(z,u,\pt{x})S_2(z,\pt{x})^2 + (x_{12}-1) z^3P_1(z,u,\pt{x})\nonumber\\
    &+ (x_{13}-1) 2z^2P_1(z,u,\pt{x})S_2(z,\pt{x}),
\end{align}
where $S_i(z,\pt{x})$ denotes the generating function of maps with simple boundary and root face valency~$i$ and the generating functions $P_k(z,u,\pt{x})$ count the number of maps with a partial simple boundary of length~$k$ (that is, $S_i$-maps and $P_i$-maps respectively). 
\begin{figure}
    \centering
    \includegraphics[width=0.8\textwidth]{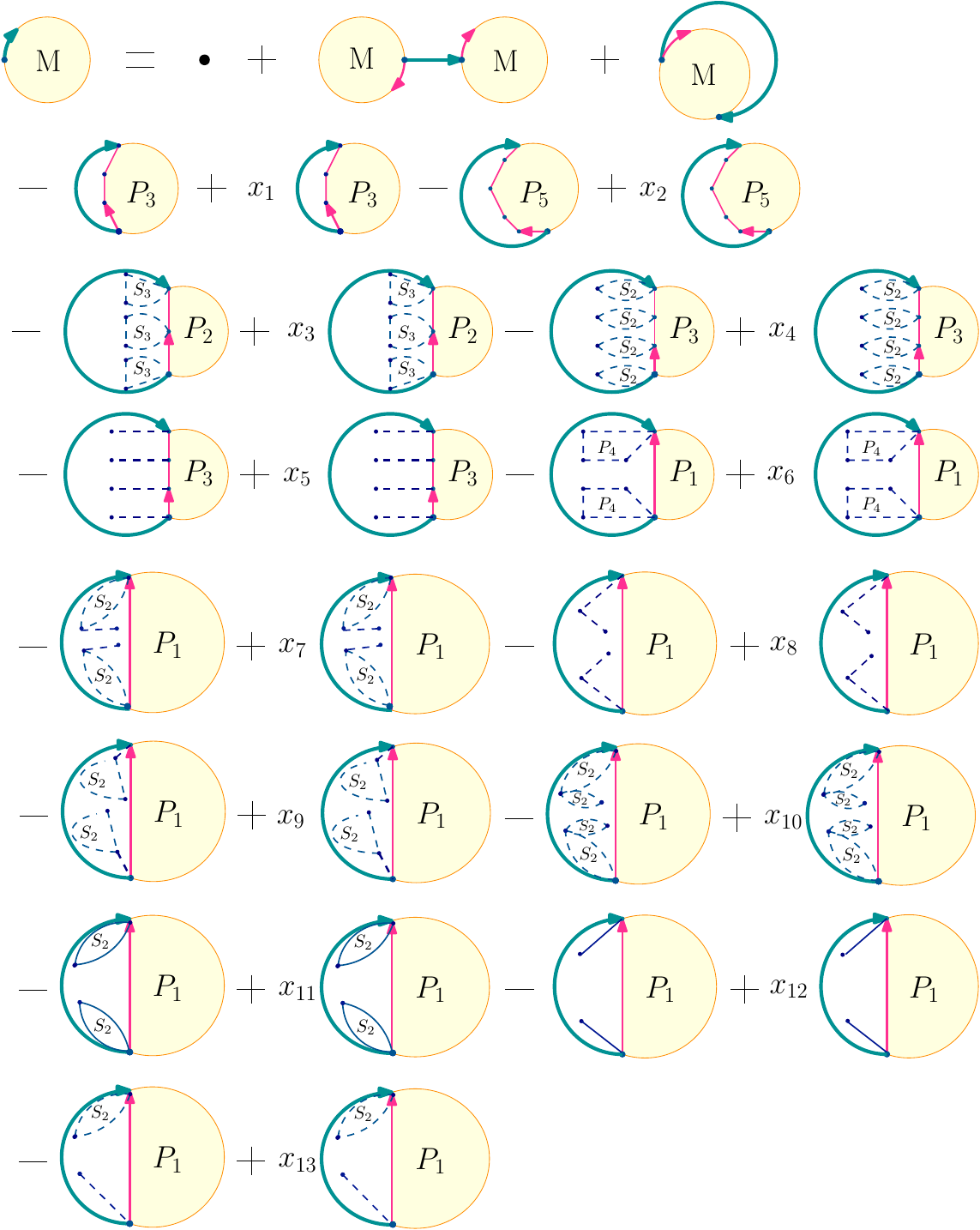}
    \caption{Decomposition of rooted planar maps with counting variables for faces associated to the intersection types of koalas}
    \label{fig:map_face}
\end{figure}

The generating functions for $S_i$-maps and $P_i$-maps have already been covered in Section~\ref{sec:intro} and we already observed that the systems for $P_\ell(z,u,\pt{x})$ and $S_\ell(z,\pt{x})$ are iterative such that we can substitute the generating functions in the main equation~(\ref{sys:koalas}) by polynomials in $M(z,u,\pt{x}), M(z,1,\pt{x})$ and $(m_i(z,\pt{x}))_{0\leq i\leq \ell}$. In turn, the series $m_i(z,\pt{x})$ can be computed by evaluating the first~$i$ coefficients in $u$ in the main equation for $M(z,u,\pt{x})$.

The main observation is that all $m_i(z,\pt{x})$ can be expressed as analytic functions in $M(z,1,\pt{x})$ and additional functions which stay analytic at the singularity $r(\pt{x})$ (see also~\cite{DrPa}). The reduced system then falls into the regime of Theorem~3 in~\cite{DrmotaNoyYu} which yields a solution of the form
\[
    M(z,1,\pt{x}) = G(z,\pt{x}) + H(z,\pt{x})\left(1-\frac{z}{r(\pt{x})}\right)^{\frac{3}{2}}
\]
where $G(z,\pt{x})$ and $H(z,\pt{x})$ are analytic functions in a neighbourhood of $(r(\pt{1}),\pt{1})$. 
Lemma \ref{LeMm} applies then to the corresponding probability generating function $[z^n]M(z,1,\pt{x})/ [z^n]M(z,1,\pt{1})$.

\subsection{Expectation and variance of koala occurrences}
We start with the first moment, where we count maps with a single marked pattern. After deletion of the interior edges we are thus interested in the number $m_{n-2,1,0,0,0,0,0,0,0,0}$ of  maps on $n-2$ edges with one marked simple 4-gon. 
Clearly,
\[
    m_{n-2,1,0,0,0,0,0,0,0,0} = [z^{n-2}] \partial_{x_{1}}M(z,1,\pt{x})|_{\pt{x}=\pt{1}},
\]
which we can easily determine by Lemma \ref{LeMm} since
\[
    \E \left[Y_{n-2}^{(1)} \right] = \frac{m_{n-2,1,0,0,0,0,0,0,0,0}}{m_{n-2}} = (n-2)f_{x_{1}}(\pt{1}) + g_{x_{1}}(\pt{1}) + O\left(\frac{1}{n}\right),
\]
where $\pt{Y}_n = \left(Y_n^{(1)},Y_n^{(2)},\dots, Y_n^{(8)}\right)$ is the vector of random variables $Y_i$ which give the number of faces counted by the variable $x_i$.
Now, there are two rotations of a single koala which could have been in the distinguished face. Therefore, in the final computation, we obtain
\begin{align*}
    \E(X_n) &= \frac{2 m_{n-2,1,0,0,0,0,0,0,0,0}}{m_n} = \frac{2 m_{n-2,1,0,0,0,0,0,0,0,0}}{m_{n-2}}\cdot\frac{m_{n-2}}{m_n} \\
    &= 2\left((n-2)f_{x_{1}}(\pt{1}) + g_{x_{1}}(\pt{1}) + O\left(\frac{1}{n}\right)\right)\frac{[z^{n-2}] M(z,1,\pt{1})}{[z^n] M(z,1,\pt{1})}
\end{align*}

We will need an expansion up to constant order of $\E(X_n)$ in the computation of the variance. Thus, we will be more precise in the computation of the second factor. 

\begin{lemma}\label{le:frac_map}
    Let $M(z,1,\pt{1})$ be the generating function of rooted planar maps, where $z$ marks the number of edges. Then
    \[
        \frac{[z^{n-k}] M(z,1,\pt{1})}{[z^n] M(z,1,\pt{1})}  = \left(1+\frac{5k}{2n}+ O\left(\left(\frac{k}{n}\right)^2\right)\right)12^{-k}
    \]
\end{lemma}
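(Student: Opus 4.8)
The plan is to extract the required ratio from the known singularity structure of the map generating function $M(z,1,\pt 1)$. The generating function for rooted planar maps counted by edges is classical; it has a unique dominant singularity at $z=\rho=1/12$ with a square-root type behaviour. More precisely, I would invoke the standard fact (which also follows from the $\Delta$-analyticity framework of \cite{DrmotaNoyYu} that the paper has already set up for the $\pt x$-deformed versions) that near $z=1/12$ one has an expansion of the form
\[
    M(z,1,\pt 1) = G(z) + H(z)\left(1-12z\right)^{3/2},
\]
with $G$ and $H$ analytic in a neighbourhood of $z=1/12$ and $H(1/12)\neq 0$. Here $r(\pt 1)=1/12$, consistent with the value $r(1)=1/12$ already computed in Example~\ref{ex:dgtriang}.

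From such an expansion, singularity analysis (transfer theorems) yields the coefficient asymptotics
\[
    [z^n] M(z,1,\pt 1) = 12^{n}\, n^{-5/2}\left(a_0 + \frac{a_1}{n} + O\!\left(\frac{1}{n^2}\right)\right)
\]
for suitable constants $a_0\neq 0$ and $a_1$, the exponent $-5/2$ coming directly from the $(1-12z)^{3/2}$ term via the standard formula $[z^n](1-z/\rho)^{3/2} \sim \rho^{-n} n^{-5/2}/\Gamma(-3/2)$. The key point is that only the ratio is needed, so the precise value of $a_0$ is irrelevant; only the ratio $a_1/a_0$ will enter the subleading term.

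Next I would simply form the quotient. Writing $b_n := [z^n]M(z,1,\pt 1)$, we get
\[
    \frac{b_{n-k}}{b_n} = 12^{-k}\,\frac{(n-k)^{-5/2}}{n^{-5/2}}\cdot\frac{a_0 + a_1/(n-k) + O(n^{-2})}{a_0 + a_1/n + O(n^{-2})}.
\]
Expanding $\bigl(1-k/n\bigr)^{-5/2} = 1 + \tfrac{5k}{2n} + O\!\bigl((k/n)^2\bigr)$ handles the polynomial factor and produces exactly the stated $\tfrac{5k}{2n}$ term; the constant $\tfrac 52$ is precisely $-(-5/2)$, the negative of the subexponential exponent. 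The remaining ratio of the bracketed series contributes $1 + O(k/n^2) = 1 + O((k/n)^2)$ once one checks that the $a_1$-terms cancel to the order claimed, and is therefore absorbed into the error term. Combining these gives
\[
    \frac{b_{n-k}}{b_n} = \left(1 + \frac{5k}{2n} + O\!\left(\left(\frac{k}{n}\right)^2\right)\right)12^{-k},
\]
which is the assertion.

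The main obstacle is not the algebra but ensuring the error term is uniform over the relevant range of $k$, since in the application $k=\Theta(\sqrt n)$ grows with $n$. I would therefore need the coefficient asymptotics with an explicit, uniform second-order error $O(n^{-2})$ (relative), and I must verify that substituting $n\mapsto n-k$ keeps this uniform for $k$ up to order $\sqrt n$ (indeed for any $k=o(n)$), so that the expansions of $(1-k/n)^{-5/2}$ and of the correction factor remain valid term-by-term. This is standard but is the step requiring genuine care; the rest is the routine expansion sketched above.
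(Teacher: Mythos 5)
Your proof is correct, but it takes a genuinely different route from the paper. The paper's proof is completely elementary: it invokes Tutte's exact closed-form count $[z^n]M(z,1,\pt{1}) = \frac{2(2n)!}{(n+2)!\,n!}3^n$ from \cite{tutte_1963}, writes the ratio as an exact quotient of factorials, and expands that quotient directly, so no analytic input and no uniformity discussion is needed at all. You instead derive the ratio from the singular expansion $M(z,1,\pt{1}) = G(z)+H(z)(1-12z)^{3/2}$ via transfer theorems, obtaining $[z^n]M(z,1,\pt{1}) = 12^n n^{-5/2}(a_0 + a_1/n + O(n^{-2}))$ and then expanding $(1-k/n)^{-5/2}$; your identification of the constant $\tfrac52$ as the negated subexponential exponent, and your observation that the $a_1$-correction contributes only $O(k/n^2)=O((k/n)^2)$ for $k\ge 1$, are both accurate, and the uniformity for $k=o(n)$ that you flag does hold since the transfer theorem's error is uniform and $(1-k/n)^{-5/2}$ expands uniformly in that range. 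What each approach buys: the paper's argument is shorter and exact, but it is tied to the existence of Tutte's formula for general rooted planar maps; yours only uses the $3/2$-singularity structure that the paper already establishes (and that \cite{DrmotaNoyYu} provides), so it transfers verbatim to the other map families ($2$-connected, bipartite, loopless, simple) discussed in the final section, where no comparably clean closed form is available — there your argument is the one that would actually be used.
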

\begin{proof}
    Tutte \cite{tutte_1963} proved that the number of planar maps on $n$ edges is
    \[
        [z^n]M(z,1,\pt{1}) = \frac{2(2n)!}{(n+2)!n!}3^n
    \]
    and therefore
    \begin{equation*}\label{eq:exp_coeffs}
        \frac{[z^{n-k}] M(z,1,\pt{1})}{[z^n] M(z,1,\pt{1})} = \frac{(2n-2k)!(n+2)!n!}{(n-(k-2))!(n-k)!(2n)!}3^{-k} = \left(1+\frac{5k}{2n}+ O\left(\left(\frac{k}{n}\right)^2\right)\right)12^{-k}.
    \end{equation*}
\end{proof}

If we plug these asymptotics in our calculation of $\E [X_n ]$, we obtain
\begin{align*}
    \E [X_n] &= \frac{2}{12^2}\left((n-2)f_{x_{1}}(\pt{1}) + g_{x_{1}}(\pt{1}) + O\left(\frac{1}{n}\right)\right) \left(1+\frac{5}{n} + O\left(\frac{1}{n^2}\right)\right)\\
    &= \frac{2}{12^2} f_{x_{1}}(\pt{1})\left(n-2+\frac{g_{x_{1}}(\pt{1})}{f_{x_{1}}(\pt{1})} + O\left(\frac{1}{n}\right)\right) \left(1+\frac{5}{n} + O\left(\frac{1}{n^2}\right)\right)\\
    &= \frac{2}{12^2} f_{x_{1}}(\pt{1})\left(n + 3+\frac{g_{x_{1}}(\pt{1})}{f_{x_{1}}(\pt{1})}+ O\left(\frac{1}{n}\right)\right)
\end{align*}
where $f(\pt{x}) = \log(r(\pt{1}))-\log(r(\pt{x}))$, $g(\pt{x}) = \log(H(r(\pt{x}),\pt{x}))-\log(H(r(\pt{1}),\pt{1}))$ and the partial derivatives are computable constants analogous to Example~\ref{ex:dgtriang}.

We move on to the second moment where we count maps with two selected and labelled koalas as given by
\[
    \E\left[X_n(X_n-1)\right] =  \frac{m_{n,2}^\circ}{m_n} .
\]
Either these two koalas do not intersect or we find one of the twelve intersection types. 

The case where they do not intersect lead to analogous considerations as above such that we count maps with two distinguished simple $4$-gons after deletion of the four interior edges. For intersection types 9 and 11 we delete three interior edges, for all others all four. To shorten notation, we define the vector which counts the number of deleted edges in intersection type $i$ as 
\[
    d = (d_1,d_2,\dots, d_{16}) = (4,4,4,4,4,4,4,4,3,4,3,4,4,4,4,4).
\]
To employ Lemma~\ref{prop:fact}, instead of counting maps with distinguished koala occurrences we count maps with distinguished faces weighted by 
\[
    \frac{r_i}{c_{t(i)}} = 
    \begin{cases}
        1 \quad &\mbox{if } i\in \{1,3,4,5,6,7,8,10,12,13,14,15\}\\
        3 \quad &\mbox{if } i = 2\\
        \frac{1}{2} \quad &\mbox{if } i \in \{9,11\},
    \end{cases}
\]
where $r_i$ is the number of rotations of the intersection type, $t(i)$ gives the index of the variable and $c_{t(i)}$ denotes the number of rotations of the associated face.
That is,
\[     
    (t(1),t(2),\dots, t(16)) = (1,2,3,3,4,5,6,7,8,8,9,9,10,11,12,13).
\]
and we ultimately obtain
\[
    \frac{1}{2!} \frac{m_{n,2}^\circ}{m_n} = 2^2\, \frac{m_{n-2\cdot 2,2,0,0,0,0,0,0}}{m_n} + \sum_{i=1}^{16} \left(\frac{r_i}{c_{t(i)}}\; \frac{m_{n-d_i,e_{t(i)}}}{m_n}\right),
\]
where $e_j$ denotes the $j$-th unit vector. 
Analogous to the computation of the expectation, we can determine these numbers by computing the factorial moments of $\left(Y_n^{(1)},Y_n^{(2)},\dots, Y_n^{(13)}\right)$ which are the random number of faces counted by the variables $x_i$ respectively. In particular, since 
\[
    \E \left[Y_{n-4}^{(1)}\left(Y_{n-4}^{(1)}-1\right)\right] = 2!\;\frac{m_{n-4,2,0,0,0,0,0}}{m_{n-4}}
\]
we have 
\begin{align*}
    \E\left[X_n(X_n-1)\right]
    &=2^2\;\frac{m_{n-4}}{m_n} \;\E \left[Y_{n-4}^{(1)}(Y_{n-4}^{(1)}-1)\right]+
    +2!\; \sum_{i=1}^{12} \frac{r_i}{c_{t(i)}}\;\frac{m_{n-d_i}}{m_n} \;\E \left[Y_{n-d_i}^{(t(i))}\right].
\end{align*}
Once again, we apply Lemmas~\ref{LeMm} and~\ref{le:frac_map} such that
\begin{align*}
    \frac{m_{n,2}^\circ}{m_n}
    &=\frac{4}{12^4}\Big(f_{x_{1}}(\pt{1})^2(n-4)^2 +\big(f_{x_1x_1}(\pt{1})+2g_{x_{1}}(\pt{1})f_{x_{1}}(\pt{1})\big)n \Big)\left(1+\frac{10}{n}+ O\left(\frac{1}{n^2}\right)\right)\\
    &\qquad + \sum_{i=1}^{16} \frac{2r_i}{12^{d_i}c_{t(i)}}f_{x_{t(i)}}(\pt{1})n
    \left(1+O\left(\frac{1}{n}\right)\right)+O\left(1\right)\\
    &=\frac{4}{12^4}\Big(f_{x_{1}}(\pt{1})^2n^2 +\big(f_{x_1x_1}(\pt{1})+2g_{x_{1}}(\pt{1})f_{x_{1}}(\pt{1})+2f_{x_1}(\pt{1})^2\big)n \Big)\\
    &\qquad + \sum_{i=1}^{16} \frac{2r_i}{12^{d_i}c_{t(i)}}f_{x_{t(i)}}(\pt{1})n+O(1)
\end{align*}
Since $\mathbb{V}(X_n) = \E(X_n(X_n-1))+\E(X_n) -\E(X_n)^2$ 
we can compute the variance of $X_n$ as
\begin{align*}
    \mathbb{V}(X_n) 
    &= \frac{4}{12^4}\Big(f_{x_{1}}(\pt{1})^2n^2 +\big(f_{x_1x_1}(\pt{1})+2g_{x_{1}}(\pt{1})f_{x_{1}}(\pt{1})+2f_{x_1}(\pt{1})^2\big)n \Big)\\
    &\quad + \sum_{i=1}^{16} \frac{2r_i}{12^{d_i}c_{t(i)}}f_{x_{t(i)}}(\pt{1})n
    + \frac{2}{12^2} f_{x_{1}}(\pt{1})n \\
    &\quad - \frac{4}{12^4} f_{x_{1}}(\pt{1})^2\left(n + 3+\frac{g_{x_{1}}(\pt{1})}{f_{x_{1}}(\pt{1})}+ O\left(\frac{1}{n}\right)\right)^2 + O(1)\\
    &= \frac{4f_{x_{1}}(\pt{1})^2}{12^4}n^2 + \frac{4f_{x_{1}}(\pt{1})^2}{12^4}\left( \frac{f_{x_1x_1}(\pt{1})}{f_{x_1}(\pt{1})^2}+\frac{2g_{x_{1}}(\pt{1})}{f_{x_{1}}(\pt{1})}+2\right)n+ \sum_{i=1}^{16} \frac{2r_i}{12^{d_i}c_{t(i)}}f_{x_{t(i)}}(\pt{1})n\\
    &\quad + \frac{2}{12^2} f_{x_{1}}(\pt{1})n - \frac{4f_{x_{1}}(\pt{1})^2}{12^4} n^2-\frac{4f_{x_{1}}(\pt{1})^2}{12^4}  \left(6+\frac{2g_{x_{1}}(\pt{1})}{f_{x_{1}}(\pt{1})}\right)n+ O(1)\\
    &= \left(\frac{4}{12^4}f_{x_{1}x_{1}}(\pt{1})-\frac{16}{12^4}f_{x_{1}}(\pt{1})^2  + \sum_{i=1}^{16} \frac{2r_i}{12^{d_i}c_{t(i)}}f_{x_{t(i)}}(\pt{1})  + \frac{2}{12^2} f_{x_{1}}(\pt{1})\right)n + O(1)
\end{align*}

\subsection{The k-th moment}
If we compute the $k$-th factorial moment,  where $k = \Theta(\sqrt{n})$, we count maps with several selected and labelled koala pattern occurrences. By Lemma~\ref{lem:S1} it is enough to count only those maps where a selected koala intersects with at most one other selected koala. We therefore sum over all $\pt{k} = (s,p_1,p_2,\dots p_{12})$, where $s$ is the number of selected single koalas (which do not intersect with other koala occurrences), $p_i$ the number of selected occurrences of intersection type $i$ respectively and $s+ \sum 2 p_i= k$. When we count maps with selected and labelled koalas of intersection types $\pt{k} = (s,p_1,p_2,\dots p_{12})$, we delete in total 
\[
    D:= 2s + \sum d_ip_i = 2k-p_9-p_{11}
\] 
edges and end up with $s+p_1$ faces counted by $x_1$, $p_2$ faces counted by $x_2$, $p_3+p_4$ faces counted by $x_3$, $p_5$ faces counted by $x_4$ and so on. Thus we are interested in the number
$m_{n-D,\bar{\pt{k}}}$, 
where $\bar{\pt{k}} = (s+p_1,p_2,p_3+p_4,p_5,p_6,p_7,p_8,p_9+p_{10},p_{11}+p_{12})$. Once again this number is easily determined by Lemma~\ref{LeMm}. 

However, when we reconstruct how many maps reduce to this map with $\bar{\pt{k}}$ distinguished faces, then we first have to choose $s$ simple $4$-gons out of the $s+p_1$ selected, into which we reinsert the edges of single koalas. That is, we obtain a factor $\binom{s+p_1}{s}$. Similarly, we choose $p_3$ of the selected faces that are associated to types $3$ and $4$ such that we obtain a factor $\binom{p_3+p_4}{p_3}$ and we do the same for intersection types 9 and 10 and further types 11 and 12, where we obtain the factors $\binom{p_9+p_{10}}{p_9}$ and $\binom{p_{11}+p_{12}}{p_{11}}$ respectively. 

Further, we gain a factor $2^s$ when we reinsert edges of single koalas and according to Lemma~\ref{prop:fact}, each intersection type $i$ in the sum is weighted by $\left(\frac{r_i}{c_{t(i)}}\right)^{p_i}$.

Finally, we should not forget to relabel the $k$ koalas by $1$ to $k$. That is, we gain another factor of $k!$. In the end, we have
\begin{align}\label{eq:justeq}
    \frac{m_{n,k}^{\circ,\times}}{m_n} 
     &= k!\sum_{s+2\sum p_i = k} \binom{s+p_1}{s} \binom{p_3+p_4}{p_3}\binom{p_9+p_{10}}{p_9} \binom{p_{11}+p_{12}}{p_{11}} \nonumber  \\
     &\qquad \qquad \qquad  \qquad \cdot \left( 2^s \prod_{i=1}^{16} \left(\frac{r_i}{c_{t(i)}}\right)^{p_i}\right)\frac{m_{n-D,\bar{\pt{k}}}}{m_{n}}.
\end{align}
Once again, we apply Lemma \ref{LeMm} and deduce that
\begin{align*}
    \E\left[(\pt{Y}_{n-D})_{\bar{\pt{k}}}\right] 
    &=(s+p_1)!p_2!(p_3+p_4)!p_5!p_6!p_7!p_8!(p_9+p_{10})!(p_{11}+p_{12})\cdot \frac{m_{n-D,\bar{\pt{k}}}}{m_{n-D}}
    \\
    &= \left(n-D\right)^{s+\sum p_i}\cdot f_{x_1}(\pt{1})^s\left(\prod_{i=1}^{16} f_{x_i}(\pt{1})^{p_i}\right)e^{\frac{1}{2\left(n-D\right)}\left<\bar{\pt{k}},\Sigma \bar{\pt{k}}\right>}\left(1+O\left(\frac{1}{k}\right)\right)
\end{align*}
with $(\Sigma)_{i,j} = \frac{f_{x_{i}x_{j}}(\pt{1})}{f_{x_{i}}(\pt{1})f_{x_{j}}(\pt{1})}$. 

Since we can rewrite the binomial coefficients divided the factorials by $\frac{1}{(s+p_1)!}\binom{s+p_1}{s} = \frac{1}{s!p_1!}$ for example, we can simplify (\ref{eq:justeq}) to
\begin{align*}
    \frac{m_{n,k}^{\circ,\times}}{m_n} 
    &= k!\sum_{s+2\sum p_i = k} \left(\frac{2^s}{s!}
    \prod_{i=1}^{16}
    \frac{r_i^{p_i}}{c_{t(i)}^{p_i}p_i!} \right) 
    \frac{m_{n-D}}{m_{n}} \;
    \E \left[(\pt{Y}_n)_{\bar{\pt{k}}}\right].
\end{align*}

It is left to entangle this expression and extract the main asymptotic terms. We apply Lemma~\ref{le:frac_map} and fix $s=k-2P$, where $P=\sum p_i$
such that we obtain
\begin{align*}    
    \frac{m_{n,k}^{\circ,\times}}{m_n} &= k!\sum_{P = 0}^{\lfloor k/2 \rfloor} \sum_{\sum p_i = P}
    \left(\frac{2^{k-2P}}{(k-2P)!}\prod_{i=1}^{16}\frac{r_i^{p_i}}{c_{t(i)}^{p_i}p_i!}\right) \frac{m_{n-D}}{m_{n}} \;\E \left[(\pt{Y}_n)_{\bar{\pt{k}}}\right]
    \\
    &= k!\sum_{P = 0}^{\lfloor k/2 \rfloor} \sum_{\sum p_i = P}
    \frac{2^{k-2P}}{(k-2P)!}\left(n-D\right)^{k-P}f_{x_1}(\pt{1})^{k-2P}\\
    &\hspace{30mm} \cdot 
    \left(\prod_{i=1}^{16} \frac{1}{p_i!} \left(\frac{r_if_{x_i}(\pt{1})}{c_{t(i)}}\right) ^{p_i}\right)e^ {\frac{1}{2\left(n-D\right)}\left<\bar{\pt{k}},\Sigma \bar{\pt{k}}\right>}\left(1+O\left(\frac{1}{k}\right)\right)\\
    &\hspace{30mm} \cdot \frac{1}{16^{D}}\left(1+\frac{5D}{2n}+O\left(\frac{1}{n}\right)\right).
\end{align*}
Next, we factor out anything that is independent of the $p_i$'s and distribute anything with exponent $P$ in the product over the $p_i$'s.
\begin{align*} 
    \frac{m_{n,k}^{\circ,\times}}{m_n} &= \left(\frac{2}{12^2}nf_{x_1} (\pt{1})\right)^k\sum_{P = 0}^{\lfloor k/2 \rfloor} \sum_{\sum p_i = P}
   k(k-1)(k-2)\cdots (k-2P+1)\\
    &\qquad \cdot 
    \left(1-\frac{D}{n}\right)^{k}\prod_{i=1}^{16} \frac{1}{p_i!}\left(\frac{r_if_{x_{t(i)}}(\pt{1})}{4c_{t(i)}(n-D)f_{x_1}(\pt{1})^2}\right)^{p_i} \\
    &\qquad \cdot e^{\frac{\left<\bar{\pt{k}}, \Sigma \bar{\pt{k}}\right>}{2(n-D)}} 12^{p_9+p_{11}}\left(1+O\left(\frac{1}{k}\right)\right)
\end{align*}
In the next step, we want to expand the exponential function and rewrite
\begin{align*}
    \left<\bar{\pt{k}}, \Sigma \bar{\pt{k}}\right> &= \left<\pt{k}, \bar{\Sigma} \pt{k}\right>
\end{align*}
where $\bar{\Sigma}$ is an $17\times17$ matrix with entries $(\bar{\Sigma})_{i+1,j+1} = \frac{f_{x_{t(i)}x_{t(j)}}(\pt{1})}{f_{x_{t(i)}}(\pt{1})f_{x_{t(j)}}(\pt{1})}$ for $0\leq i,j \leq 16$ and $t(0)=1$. Then we split the expression such that
\begin{align*}
    \left<\pt{k}, \bar{\Sigma} \pt{k}\right> &= (k^2-4kP+4P^2)\frac{f_{x_1x_1}(\pt{1})}{f_{x_1}(\pt{1})^2}\\
    &\quad +(k-2P)\sum_{i=1}^{16}\frac{f_{x_{1}x_{t(i)}}(\pt{1})}{f_{x_{1}}(\pt{1})f_{x_{t(i)}}(\pt{1})}p_i +\sum_{i,j=1}^{16} \frac{f_{x_{t(i)}x_{t(j)}}(\pt{1})}{f_{x_{t(i)}}(\pt{1})f_{x_{t(j)}}(\pt{1})}\,p_ip_j\\
    &:= k^2\frac{f_{x_1x_1}(\pt{1})}{f_{x_1}(\pt{1})^2}+\sum_{i=1}^{16} q_ip_i 
\end{align*}
where $q_i= 4(P-k)\frac{f_{x_1x_1}(\pt{1})}{f_{x_1}(\pt{1})^2}+(k-2P)\frac{f_{x_{1}x_{t(i)}}(\pt{1})}{f_{x_{1}}(\pt{1})f_{x_{t(i)}}(\pt{1})}+\sum_{j=1}^{16} \frac{f_{x_{t(i)}x_{t(j)}}(\pt{1})}{f_{x_{t(i)}}(\pt{1})f_{x_{t(j)}}(\pt{1})}p_j$. Since $q_i = O(k)$, the exponential function in the modified expression below is asymptotically equal to 
\begin{align*}
    \exp\left(\frac{\left<\bar{\pt{k}}, \Sigma \bar{\pt{k}}\right>}{2(n-D)}\right) &= \exp\left(\frac{1}{2n}\left(k^2\frac{f_{x_1x_1}(\pt{1})}{f_{x_1}(\pt{1})^2}+\sum_{i=1}^{16} q_ip_i \right)\right)\left(1+O\left(\frac{1}{k}\right) \right).
\end{align*}
The above therefore reduces to
\begin{align*}
    \frac{m_{n,k}^{\circ,\times}}{m_n} &= \left(\frac{2}{12^2}nf_{x_1} (\pt{1})\right)^k e^{\frac{k^2f_{x_1x_1}(\pt{1})}{2nf_{x_1}(\pt{1})^2}}\sum_{P = 0}^{\lfloor k/2 \rfloor} \sum_{\sum p_i = P}
   \frac{k(k-1)\cdots (k-2P+1)}{k^{2P}}\\
    &\qquad \cdot 
    \left(1-\frac{D}{n}\right)^{k}\prod_{i=1}^{16} \frac{1}{p_i!}\left(\frac{k^2}{n}\frac{r_if_{x_{t(i)}}(\pt{1})}{4c_{t(i)}f_{x_1}(\pt{1})^2} e^{\frac{q_i}{2n}}\right)^{p_i}\\
    &\qquad \cdot 12^{p_9+p_{11}}\left(1+O\left(\frac{1}{k}\right)\right).
\end{align*}
Since  $D=2k-p_9-p_{11}$ we can write
\[\left(1-\frac{D}{n}\right)^k = \exp\left(\frac{-2k^2}{n}\right)\left(1+O\left(\frac{(P+1)k}{n}\right)\right)\left(1+O\left(\frac{k^3}{n^2}\right)\right) = e^{-\frac{2k^2}{n}}\left(1+O\left(\frac{P+1}{k}\right)\right) \]
such that eventually we end up with
\begin{align*}
    \frac{m_{n,k}^{\circ,\times}}{m_n} &= \left(\frac{2}{12^2}nf_{x_1} (\pt{1})\right)^k e^{\frac{k^2}{n}\frac{\left(f_{x_1x_1}(\pt{1})-4f_{x_1}(\pt{1})^2\right)}{2f_{x_1}(\pt{1})^2}}\\
    &\qquad \cdot \sum_{P = 0}^{\lfloor k/2 \rfloor} \sum_{\sum p_i = P}
   \frac{k(k-1)\cdots (k-2P+1)}{k^{2P}}\left(1+O\left(\frac{P+1}{k}\right)\right)\\
    &\qquad \qquad \qquad \qquad \cdot 
    \prod_{i=1}^{16} \frac{1}{p_i!}\left(\frac{k^2}{n}\frac{12^{4-d_i}r_if_{x_{t(i)}}(\pt{1})}{4c_{t(i)}f_{x_1}(\pt{1})^2} e^{\frac{q_i}{2n}}\right)^{p_i}.
\end{align*}
Now given the factorials of $p_i$ in the denominators in the product, asymptotically we only need to consider the terms where $P$ is small, say $P\le k^{1/3}$ such that we can use the estimate $\frac{k(k-1)\cdots (k-2P+1)}{k^{2P}} = 1+ O\left(\frac{P^2}{k}\right)$. For $P > k^{1/3}$ we can use
the trivial upper bound $\frac{k(k-1)\cdots (k-2P+1)}{k^{2P}} \le 1$ and observe directly
that the corresponding terms are negligible. We also use the bound
and $e^{\frac{q_i}{2n}} = 1+O\left(\frac{k}{n}\right)$ and derive
\begin{align*}
    \frac{m_{n,k}^{\circ,\times}}{m_n}  &\sim \left(\frac{2}{12^2}nf_{x_1} (\pt{1})\right)^k e^{\frac{k^2}{n}\frac{\left(f_{x_1x_1}(\pt{1})-4f_{x_1}(\pt{1})^2\right)}{2f_{x_1}(\pt{1})^2}}\\
    &\quad \cdot \sum_{P \le k^{1/3}} \sum_{\sum p_i = P}
    \prod_{i=1}^{16} \left(\frac{1}{p_i!}\left(\frac{k^2}{n}\frac{12^{4-d_i}r_if_{x_{t(i)}}(\pt{1})}{4c_{t(i)}f_{x_1}(\pt{1})^2} \left( 1+ O\left(\frac{k}{n}\right)\right)\right)^{p_i}\right)\left( 1+ O\left(\frac{P^2}{k}\right)\right)\\
    &= \left(\frac{2}{12^2}nf_{x_1} (\pt{1})\right)^k e^{\frac{k^2}{2n}\left(\frac{f_{x_1x_1}(\pt{1})-4f_{x_1}(\pt{1})^2}{nf_{x_1}(\pt{1})^2}+
    \sum_{i=1}^{16} \left(\frac{12^{4-d_i}r_if_{x_{t(i)}}(\pt{1})}{2c_{t(i)}f_{x_1}(\pt{1})^2}\right)\right)}\left( 1+ O\left(\frac{1}{k^{1/3}}\right)\right)
\end{align*}
Finally, recall that $\mu_n \sim \frac{2}{12^2}nf_{x_1} (\pt{1})$ such that we can derive
\begin{align*}
    \frac{m_{n,k}^{\circ,\times}}{m_n} & \sim \mu_n^k\exp\left(\frac{k^2}{2n}\left(\frac{f_{x_1x_1}(\pt{1})-4f_{x_1}(\pt{1})^2}{f_{x_1}(\pt{1})^2}+
    \sum_{i=1}^{16} \left(\frac{12^{4-d_i}r_if_{x_{t(i)}}(\pt{1})}{2c_{t(i)}f_{x_1}(\pt{1})^2}\right)\right)\right).
 \end{align*}

This looks very promising with respect to Theorem \ref{thm:gao_wormald} and it is straightforward to compute that
\begin{align*}
    \frac{k^2}{2}\frac{\mathbb{V}(X_n) -  \E(X_n)}{ \E(X_n)^2}
    &= \frac{k^2}{2}\frac{\left(\frac{4}{12^4}f_{x_{1}x_{1}}(\pt{1})-\frac{16}{12^4}f_{x_{1}}(\pt{1})^2  + \sum_{i=1}^{16}\frac{2r_i}{12^{d_i}c_{t(i)}}f_{x_{t(i)}}(\pt{1})  \right)n + O(1)}{\frac{4}{12^4} f_{x_{1}}(\pt{1})^2n^2\Big(1 + o(1)\Big)}\\
    &\sim \frac{k^2}{2n}\frac{2f_{x_{1}x_{1}}(\pt{1})-8f_{x_{1}}(\pt{1})^2  + \sum_{i=1}^{16} 12^{4-d_i}\frac{r_i}{c_{t(i)}}f_{x_{t(i)}}(\pt{1})  }{2f_{x_{1}}(\pt{1})^2}
\end{align*}
just as desired.
Concerning Lemma~\ref{lem:S1}, we note that $\frac{m_{n,k}^{\circ,\times}}{m_n}$ is indeed the main contribution to the asymptotics of the $k$-th moment because $(\mu_n/2)^k = o(\mu_n^k)$ which completes the proof of Theorem \ref{thm:central_koala}.

\section{Main theorem}\label{sec:main}

In this section, we extend the procedure which we used in the previous section to pattern occurrences of arbitrary maps with simple boundary and prove the following theorem.

\begin{theo}\label{thm:central}
    Let \map{p} be a planar map with simple boundary and let $X_n$ be the number of occurrences of \map{p} as a pattern in a random rooted planar map with $n$ edges. Then
    \[\frac{X_n - \E[X_n]}{\sqrt{\V[X_n]}} \;\longrightarrow\; \mathcal{N}(0,1)\]
    where $\E[X_n] \sim \mu n$, $ \V[X_n] \sim \sigma^2n$ and $\mu, \sigma \in \R$.
\end{theo}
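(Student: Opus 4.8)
The plan is to follow the strategy developed for the koala in Section~\ref{sec:koalas}, now carried out for an arbitrary pattern $\map{p}$ with simple boundary. The backbone is the moment method of Theorem~\ref{thm:gao_wormald}: it suffices to compute the factorial moments $\E[(X_n)_k]$ for $k=\Theta(\mu_n/\sigma_n)$ and to verify that they have the form \eqref{eqthGW}. First I would record, as in~\eqref{eq:kthmom}, that $\E[(X_n)_k]=m_{n,k}^\circ/m_n$, where $m_{n,k}^\circ$ counts maps on $n$ edges with $k$ labelled occurrences of $\map{p}$, and then invoke Lemma~\ref{lem:S1} to replace $m_{n,k}^\circ$ by $m_{n,k}^{\circ,\times}$, in which every labelled occurrence intersects at most one other labelled occurrence. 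Since $\mu_n=\Theta(n)$ and $\sigma_n=\Theta(\sqrt n)$ (which the computation will confirm), the range $k=\Theta(\sqrt n)$ is sparse enough that the error terms and the $(\mu_n/2)^k$ term in Lemma~\ref{lem:S1} are negligible against $\mu_n^k$, exactly as in~\eqref{eq:kthmom2}.

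The combinatorial heart is the classification of the ways two occurrences of $\map{p}$ can intersect. For a fixed pattern there are only finitely many rooted intersection types, so let $[I]$ enumerate them; this finiteness is what makes the whole scheme work. To each type $i$ I would attach its number of rotations $r_i$, apply the deletion procedure of Definition~\ref{def:deleting} to obtain a post-deletion map with a well-defined deletion face of some boundary shape $(h,e,s_1,s_2,\dots)$ in the sense of Definition~\ref{def:bound}, and let $c_{t(i)}$ be the number of rotations of that face, grouping the types that yield the same face via a surjection $t$. Lemma~\ref{prop:fact} then converts, type by type, a count of distinguished intersecting pairs into a count of distinguished faces, contributing the weight $r_i/c_{t(i)}$ and reducing the edge count by $d_i$. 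In this way the problem of counting labelled occurrences (singletons together with intersecting pairs) is reduced entirely to counting maps with distinguished faces of prescribed boundary shapes.

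For the analytic input I would set up the functional equation generalising~\eqref{sys:koalas}: introduce one variable for the face obtained by deleting the interior of a single occurrence and one variable per distinct face type arising from an intersecting pair, and express the root-edge decomposition through the $P_i$ and $S_i$ generating functions exactly as in Observation~\ref{obs:shape}. Using Lemmas~\ref{lem:simple} and~\ref{lem:Yu}, the whole system reduces to a discrete differential equation for $M(z,1,\pt{x})$ whose nonlinear part stays analytic at the dominant singularity; this places it in the regime of~\cite[Theorem~3]{DrmotaNoyYu}, yielding the square-root expansion $M(z,1,\pt{x})=G(z,\pt{x})+H(z,\pt{x})(1-z/r(\pt{x}))^{3/2}$ and hence the quasi-power form~\eqref{eq:power} for the corresponding face counts. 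Lemma~\ref{LeMm} then supplies the joint factorial moments of the face-count vector, from which $m_{n,k}^{\circ,\times}/m_n$ is assembled as a sum over compositions $s+2\sum p_i=k$, precisely as in~\eqref{eq:justeq}.

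The remaining work is to extract the asymptotics of this sum and match~\eqref{eqthGW}. After factoring out $\mu_n^k$ and the Gaussian factor coming from Lemma~\ref{LeMm}, the sum over the $p_i$ decouples: the factorials $p_i!$ in the denominators turn the inner sum into an exponential of a linear combination of the pair-weights, and one checks that the resulting exponent equals $\tfrac{k^2}{2}(\sigma_n^2-\mu_n)/\mu_n^2$, with $\mu_n$ and $\sigma_n^2$ read off from the first two factorial moments as in the koala computation. The main obstacle I anticipate is not the analytic machinery, which is inherited essentially verbatim, but the structural bookkeeping for general patterns: proving that the intersection types form a finite, correctly enumerated set, that the deletion procedure always preserves connectedness and produces a well-defined, rotation-independent boundary shape, and that the overcounting factors $r_i/c_{t(i)}$ together with the binomial choices for merged face types assemble into the clean exponential above. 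Establishing these facts uniformly in $\map{p}$, rather than by the explicit table used for the koala, is the crux of extending Theorem~\ref{thm:central_koala} to Theorem~\ref{thm:central}.
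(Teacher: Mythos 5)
Your proposal is correct and follows essentially the same route as the paper's own proof: the Gao--Wormald moment method combined with the reduction of Lemma~\ref{lem:S1}, the finite classification of intersection types with the deletion procedure and the overcounting weights $r_i/c_{t(i)}$ from Lemma~\ref{prop:fact}, the generating-function system built from Observation~\ref{obs:shape} and Lemmas~\ref{lem:simple} and~\ref{lem:Yu} leading to the quasi-power form and Lemma~\ref{LeMm}, and finally the assembly of $m_{n,k}^{\circ,\times}/m_n$ as a sum over compositions $p_0+2\sum p_i=k$ whose exponent is matched against $\frac{k^2}{2}\,(\sigma_n^2-\mu_n)/\mu_n^2$ via the expectation and variance computation. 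The structural bookkeeping you identify as the crux (finiteness of intersection types, connectedness under deletion, rotation-independence of the deletion face, and the clean assembly of the weights) is precisely what the paper carries out abstractly in Section~\ref{sec:main} and Section~\ref{sec:proofs}, so no essential ingredient is missing from your plan.
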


As in the previous section, first we prove that $\E [X_n] $ and $\V [X_n]$ are both linear in $n$ and conclude that we need to compute the factorial moments of order $k = O(\sqrt{n})$ to satisfy the conditions of Theorem~\ref{thm:gao_wormald}.
The core of this computation is the estimation of the number of maps with $k$ selected pattern occurrences that are labelled from $1$ to $k$ and where each selected pattern occurrence intersects at most on other pattern occurrence, since by Lemma \ref{lem:S1}
\begin{equation*}
    \frac{m_{n,k}^{\circ,\times} }{m_{n}}\quad \leq \quad \E\left[(X_n)_k\right] = \frac{m^{\circ}_{n,k}}{m_n} \quad \leq\quad   \frac{       m_{n,k}^{\circ,\times}}{m_{n}}+\left(\frac{\mu_n}{2}\right)^k+o\Big(\E\left[(X_n)_k\right]\Big)
    \tag{\ref{eq:kthmom2}}
\end{equation*}
where $m_{n,k}^\circ$ is the number of maps with $n$ edges and $k$ selected and labelled pattern occurrences (among arbitrary many occurrences) and $m_{n,k}^{\circ,\times}$ is the subset of all such maps where each selected pattern occurrence intersects at most one other selected occurrence.

Next we transform the problem of estimating the asymptotic growth of $m_{n,k}^{\circ,\times}$ to the asymptotic estimation of the factorial moments of face counts. The outline is exactly the same as for koalas in Section~\ref{sec:koalas}. 

\begin{itemize}
    \item We list the all intersection types $i\in [I]$ which may appear and define a maximal set of interior edges of the intersecting patterns which shall be deleted such that the map does not disconnect. That is, we fix the associated maps  respectively and distinguish the face which contained the interior edges. See Figure~\ref{fig:intersects} in the case of koalas. 
    \item For each intersection type $i \in [I]$, $d_i$ denotes the number of edges which are deleted in the associated map. 
    \item Let $\ell_0$ be the root face valency of $\map{p}$. Then the associated map of $\map{p}$ is a rooted simple $\ell_0$-gon. Further, let $r_0$ be the number of rotations of $\map{p}$ and $d_0$ the number of interior edges of $\map{p}$.
    \item The associated maps of two distinct intersection types may be the same. So, let $J$ be the number of distinct associated maps. We introduce a function $t:\{0,1,2,\dots,I\}\mapsto \{1,2,\dots,J\}$, such that the associated maps of intersection type $i$ are given by a rotation of $\map{f}_{t(i)}$ and the associated map of a rotation of $\map{p}$ is given by $\map{f}_{t(0)}$.
    \item Again, for each intersection type $i\in [I]$, $r_i$ denotes the number of rotations of the intersection type and $c_{t(i)}$ denotes the number of rotations of the associated map.
    \item Further, we define the generating function of rooted planar maps $M(z,u,\pt{x})$ where $z$ counts the number of edges, $u$ the root face valency of the map and the vector $\pt{x} = (x_1,x_2,\dots,x_J)$ consists of additional variables, where $x_i$ counts the number of faces which have the same boundary as the distinguished face in the associated maps.
    The boundary shape of the faces counted by $x_i$ is denoted by $(\ell_i, e_i, s_{i1},s_{i2},s_{i3},\dots)$.
     
    \item The boundary shape $(\ell_i, e_i, s_{i1},s_{i2},s_{i3},\dots)$ of the distinguished face  associated to intersection type $i$ translates to the term
    \[
        (x_{i}-1)c_{t(i)}z^{e_i+1}u^{2-\ell_i}P_{\ell_i-1}(z,u,\pt{x})\prod_{j\geq 1}S_j^{s_{ij}}(z,\pt{x})
    \]
    in the functional equation for the generating function of rooted planar maps $M(z,u,\pt{x})$ with additional variables $\pt{x}$ which count the faces associated to the intersection types.
\end{itemize}

Then the generating functions for the maps with additional face counting variable can be described in terms of the equation
\begin{align}\label{eq:gen_pats}
    M(z,u,\pt{x}) &= 1 + zu^2M(z,u,\pt{x})+zu\frac{uM(z,u,\pt{x})-M(z,1,\pt{x})}{u-1} \\
    &\quad + (x_{t(0)}-1)\frac{z}{u^{\ell_0-2}}P_{\ell_0-1}(z,u,\pt{x}) \nonumber\\
    &\quad + \sum_{i=1}^I(x_{t(i)}-1)c_{i}z^{e_{t(i)}+1} u^{2-\ell_{t(i)}}P_{\ell_{t(i)}-1}(z,u,\pt{x})\prod_{j\geq 1} S_j^{s_{t(i)j}}(z,\pt{x}) \nonumber
\end{align}
where $P_{\ell}(z,u,\pt{x})$ and $S_{\ell}(z,\pt{x})$ are the generating functions of $P_\ell$- and $S_\ell$-maps.
Given these parameters, we can refine the computations for small moments, like expectation and variance.
\begin{lemma}\label{th:exp_var}
    Let \map{p} be a map with simple boundary and $X_n$ be the number of occurrences of \map{p} as a pattern in a random rooted planar map with $n$ edges. Then
    \begin{align*}
        \E[X_n] &\sim \frac{r_0}{12^{d_0}} f_{x_{t(0)}}(\pt{1}) n\\
        \mathbb{V}[X_n] &\sim \frac{1}{12^{2d_0}}\Bigg(r_0^2f_{x_{t(0)}x_{t(0)}}(\pt{1})-2d_0r_0^2f_{x_{t(0)}}(\pt{1})^2 +12^{d_0}r_0f_{x_{t(0)}}(\pt{1})+\sum_{i=1}^I \frac{2r_i}{12^{d_i}c_{t(i)}}f_{x_{t(i)}}(\pt{1})\Bigg)n
    \end{align*}
    where $d_i,r_i,t(i),c_{t(i)}$ and $I$ are defined as above and $f_{x_{t(i)}}(\pt{1})$ are computable constants.
\end{lemma}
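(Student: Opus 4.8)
The plan is to transcribe the koala computation of Section~\ref{sec:koalas} to a general pattern $\map{p}$, now using the parameters $r_0, d_0, \ell_0$ of $\map{p}$ itself together with, for each intersection type $i\in[I]$, the quantities $r_i, d_i, c_{t(i)}$ and $t(i)$ fixed in the bullet list above. The index set $[I]$ is finite because two occurrences of a fixed $\map{p}$ can share faces in only boundedly many combinatorial ways. The starting point is that the face-count generating function defined by the DDE~(\ref{eq:gen_pats}) falls, exactly as~(\ref{sys:koalas}) did, into the regime of Theorem~3 of~\cite{DrmotaNoyYu}: the functions $P_\ell(z,u,\pt{x})$ and $S_\ell(z,\pt{x})$ on its right-hand side are analytic in $M(z,1,\pt{x})$ up to the singularity $r(\pt{x})$, so the probability generating function of the face counts $\pt{Y}_n$ admits the quasi-power representation~(\ref{eq:power}). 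Consequently the moment asymptotics~(\ref{eq:exp_var_uni}) and~(\ref{eq:secmon_uni}) apply to each individual face count $Y^{(j)}$, and it remains only to combine them with the ratio estimate of Lemma~\ref{le:frac_map} and the overcounting factor of Lemma~\ref{prop:fact}.

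For $\E[X_n]=m_{n,1}^\circ/m_n$ I would delete the $d_0$ interior edges of the single marked occurrence of $\map{p}$, producing a map on $n-d_0$ edges with one distinguished copy of the associated face $\map{f}_{t(0)}$; each such face comes from exactly $r_0$ rotations of $\map{p}$, so $\E[X_n]=r_0\,\frac{m_{n-d_0}}{m_n}\,\E\big[Y_{n-d_0}^{(t(0))}\big]$. Inserting $\E\big[Y_{n-d_0}^{(t(0))}\big]=(n-d_0)f_{x_{t(0)}}(\pt{1})+g_{x_{t(0)}}(\pt{1})+O(1/n)$ from~(\ref{eq:exp_var_uni}) and $m_{n-d_0}/m_n\sim 12^{-d_0}\big(1+\tfrac{5d_0}{2n}\big)$ from Lemma~\ref{le:frac_map} yields the leading term $\frac{r_0}{12^{d_0}}f_{x_{t(0)}}(\pt{1})\,n$. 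I would retain the order-$1$ terms here, since $\E[X_n]$ is squared in the variance and its constant part then contributes at linear order.

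For the variance I would compute the second factorial moment $\E[(X_n)_2]=m_{n,2}^\circ/m_n$ via the exact dichotomy of ordered pairs of distinct marked occurrences: either they are disjoint, or they realise one of the types $i\in[I]$. The disjoint part deletes $2d_0$ edges, carries weight $r_0^2$, and is controlled by the second factorial moment $\E\big[Y^{(t(0))}(Y^{(t(0))}-1)\big]=f_{x_{t(0)}}(\pt{1})^2 n^2+\big(f_{x_{t(0)}x_{t(0)}}(\pt{1})+2f_{x_{t(0)}}(\pt{1})g_{x_{t(0)}}(\pt{1})\big)n+O(1)$ from~(\ref{eq:secmon_uni}); expanding $(n-2d_0)^2$ together with the factor $12^{-2d_0}\big(1+\tfrac{5d_0}{n}\big)$ from Lemma~\ref{le:frac_map} produces the $n^2$ term, the $r_0^2 f_{x_{t(0)}x_{t(0)}}(\pt{1})$ contribution and the $-2d_0 r_0^2 f_{x_{t(0)}}(\pt{1})^2$ correction, each divided by $12^{2d_0}$. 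Each intersection type $i$ contributes, through Lemma~\ref{prop:fact}, a linear term of weight $\frac{2r_i}{c_{t(i)}}\,12^{-d_i}$ carried by the first moment $\E\big[Y^{(t(i))}\big]$, assembling into $\sum_{i=1}^I \frac{2r_i}{12^{d_i}c_{t(i)}}f_{x_{t(i)}}(\pt{1})\,n$. Finally I would form $\V[X_n]=\E[(X_n)_2]+\E[X_n]-\E[X_n]^2$: the $n^2$ contributions of $\E[(X_n)_2]$ and of $\E[X_n]^2$ cancel, the $g_{x_{t(0)}}(\pt{1})$ contributions cancel between the two, the extra $\E[X_n]$ supplies the $\frac{12^{d_0}r_0}{12^{2d_0}}f_{x_{t(0)}}(\pt{1})$ summand, and the surviving linear coefficient is exactly the expression in the statement.

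The main obstacle, as for koalas, is that the linear coefficient of $\V[X_n]$ appears only after the $n^2$ terms of $\E[(X_n)_2]$ and $\E[X_n]^2$ cancel, so every constant-order correction must be carried consistently through the computation: the $g_{x_{t(0)}}(\pt{1})$ terms from~(\ref{eq:exp_var_uni})--(\ref{eq:secmon_uni}) and the $\tfrac{5k}{2n}$ factors of Lemma~\ref{le:frac_map} both enter at the relevant order, and one must check that the $g$-terms indeed cancel so that the final answer depends only on the derivatives $f_{x_{t(i)}}$ and the combinatorial constants $r_i, d_i, c_{t(i)}$. The only genuinely pattern-specific work is enumerating the finitely many intersection types and reading off their boundary shapes, rotation counts $r_i$ and $c_{t(i)}$, and deletion numbers $d_i$; once these are tabulated (as in Table~\ref{tab:intersects} for koalas), the analytic part is identical to the worked example.
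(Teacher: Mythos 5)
Your proposal is correct and follows essentially the same route as the paper's own proof: reduction of pattern counts to face counts by deleting interior edges (with the overcounting factors $r_0$ and $r_i/c_{t(i)}$ from Lemma~\ref{prop:fact}), the quasi-power moment formulas (\ref{eq:exp_var_uni})--(\ref{eq:secmon_uni}) applied to the face-count variables $\pt{Y}_n$, the ratio estimate of Lemma~\ref{le:frac_map}, the exact disjoint/intersecting dichotomy for $\E[(X_n)_2]$, and the cancellation of the $n^2$-terms and $g$-terms in $\V[X_n]=\E[(X_n)_2]+\E[X_n]-\E[X_n]^2$. One remark: your placement of the intersection contribution $\sum_{i=1}^I \frac{2r_i}{12^{d_i}c_{t(i)}}f_{x_{t(i)}}(\pt{1})\,n$ \emph{outside} the $12^{-2d_0}$ prefactor agrees with the paper's own computation (and with the koala case and the $k$-th moment calculation in Section~\ref{sec:proofcent}), whereas the lemma as printed places this sum inside the $12^{-2d_0}$ bracket --- an inconsistency within the paper itself, not a flaw in your argument.
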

Again, we refer to Section \ref{sec:proofs} for computations on the constants.

\section{Proofs}\label{sec:proofs}
\subsection{Proof of Lemma \ref{LeMm}}

We focus on the one- and two-dimensional cases $m = 1$ and $m=2$. The general case can be proven analogously. We start with $m=1$.

    Since
    \[
        \E[ x^{X_n}] = \sum_{k\ge 0} \E[(X_n)_k] \frac{(x-1)^k}{k!}
    \]
    we can represent the factorial moments $\E[(X_n)_k]$ with the help of a
    Cauchy integral:
    \[
        \E[(X_n)_k] = k! \frac 1{2\pi i} \int_\gamma \frac {\E[ x^{X_n}]}{(x-1)^{k+1}}\, dx,
    \]
    where $\gamma$ is a contour encycling $x=1$. In the present context $\gamma$ will be a cycle with
    radius $k/(nf'(1))$. Thus, if we substitute $x = 1 + r e^{i\varphi}$ with $r = k/(nf'(1))$ and by
    using (\ref{eq:power}) we obtain
    \begin{align*}
        \E[(X_n)_k] &= \frac{k!}{2\pi } \int_{-\pi}^\pi e^{n f'(1)r e^{i\varphi} + \frac n2 f''(1)r e^{2i\varphi} 
        + O(nr^3 + r + 1/n) } r^{-k} e^{-ik\varphi}\, d\varphi \\
        & = (nf'(1))^k  e^{\frac{k^2}{2n} \frac{f''(1)}{(f'(1))^2} }  \frac{k!}{2\pi k^{k}e^{-k}}  
        \int_{-\pi}^\pi  e^{k (e^{i\varphi} - 1 - i\varphi) + \frac{k^2}{2n} \frac{f''(1)}{(f'(1))^2} (e^{2i\varphi} -1) 
        + O( \frac{k^3}{n^2} + \frac{k}{n} )} \, d\varphi.
    \end{align*}
    The last integral can be easily asympotically evaluated (uniformly for $1\le k\le C\sqrt n$, where we use
the substition $z = e^{i\varphi}$ on a circle $\gamma'$ around $z=0$ and use the property that 
 $\Re(e^{i\varphi} - 1 - i\varphi)\le -c \varphi^2$ for some comstant $c> 0$ and $|\varphi| \le \pi$ ):
    \begin{align*}
        \frac 1{2\pi} \int_{-\pi}^\pi  & e^{k (e^{i\varphi} - 1 - i\varphi) + \frac{k^2}{2n} \frac{f''(1)}{(f'(1))^2} (e^{2i\varphi} -1) 
        + O( \frac{k^3}{n^2} + \frac{k+1}{n}) } \, d\varphi \\
        &= \frac 1{2\pi} \int_{-\pi}^\pi  e^{k (e^{i\varphi} - 1 - i\varphi)} \, d\varphi 
       + O\left( \int_{-\pi}^{\pi} e^{-c k \varphi^2} \left( |\varphi| \frac{k^2}n + \frac{k^3}{n^2} + \frac{k}{n}  
       \right) d\varphi      \right)  \\
        &= \frac {e^{-k}}{2\pi i} \int_{\gamma'} e^{kz} z^{-k-1}\, dz +
        O\left( \frac kn + \frac{k^{5/2}}{n^2} + \frac{k^{1/2}}n   \right) \\   
        &=  \frac{e^{-k} k^k}{k!} + O\left( \frac{k} n  \right) 
    \end{align*}
    So that we finally obtain 
\[
     \E[ x^{X_n}] =  (nf'(1))^k  e^{\frac{k^2}{2n} \frac{f''(1)}{(f'(1))^2} } \left( 1 + O\left( \frac{k^{3/2}}n  \right) \right)
\]   
which completes the case $m=1$.

Next suppose that $m=2$, where we use the notation $(X_n,Y_n)$ for the random vector and $k_1 = k$ and $k_1 = \ell$.
Here we have
    \[
        \E[(X_n)_{k}(Y_n)_\ell] = k! \ell!  \frac 1{(2\pi i)^2} \iint\limits_{\gamma_1\times \gamma_2} 
        \frac {\E[ x_1^{X_n} x_2^{Y_n}  ]}{(x_1-1)^{k+1}(x_2-1)^{\ell+1}}\, dx_1dx_2,
    \]
where $\gamma_i$, $i=1,2$ is a circle around $x_i=1$ with radius 
$k/(nf_{x_1}(1,1))$ for $i=1$ and radius $\ell/(nf_{x_2}(1,1))$ for $i=2$.
If we substitute $x_1 = 1 + r_1 e^{i\varphi_1}$ with $r_1 = k/(nf_{x_1}(1,1))$ and
$x_2 = 1 + r_2 e^{i\varphi_2}$ with $r_2 = \ell/(nf_{x_2}(1,1))$ we, thus obtain (by using (\ref{eq:power}))
    \begin{align*}
        \E[(X_n)_k(Y_n)_\ell]
        & = (nf_{x_1}(1,1))^k (nf_{x_2}(1,1))^\ell  
        e^{ \frac{k^2}{2n} \frac{f_{x_1x_1}(1,1)}{(f_{x_1}(1,1))^2} + \frac{k\ell}{n} \frac{f_{x_1x_2}(1,1)}
        {f_{x_1}(1,1) f_{x_2}(1,1)} + 
        \frac{\ell^2}{2n} \frac{f_{x_2x_2}(1,1)}{(f_{x_2}(1,1))^2}    }   \frac{k! \ell !}{k^{k} \ell^\ell e^{-k-\ell}}  \\
        &\quad \cdot  \frac{1}{(2\pi)^2} 
        \iint_{[-\pi,\pi]^2}  e^{k (e^{i\varphi_1} - 1 - i\varphi_1) + \ell(e^{i\varphi_2} - 1 - i\varphi_2) + A  
        + O( \frac{k^3+\ell^3}{n^2} + \frac{k+\ell}{n} )} \, d\varphi_1 d\varphi_2, 
    \end{align*}
where $A$ abbreviates
\begin{align*}
    A &= \frac{k^2}{2n} \frac{f_{x_1x_1}(1,1)}{(f_{x_1}(1,1))^2} (e^{2i\varphi_1} -1) 
    + \frac{k\ell}{n} \frac{f_{x_1x_2}(1,1)}{f_{x_1}(1,1)f_{x_2}(1,1)} (e^{i\varphi_1+\varphi_2} -1) 
    + \frac{\ell^2}{2n} \frac{f_{x_2x_2}(1,1)}{(f_{x_2}(1,1))^2} (e^{2i\varphi_2} -1) \\
    &= O\left( \frac{k^2}n |\varphi_1| + \frac{k\ell}n (|\varphi_1|+|\varphi_2|) + \frac{\ell^2}n |\varphi_2|   \right).    
\end{align*}
As above the integral can be directly evaluated:
\[
    \frac{1}{(2\pi)^2} 
            \iint\limits_{[-\pi,\pi]^2}  e^{k (e^{i\varphi_1} - 1 - i\varphi_1) + \ell(e^{i\varphi_2} - 1 - i\varphi_2)} 
    \, d\varphi_1 d\varphi_2   = \frac{e^{-k-\ell} k^k \ell^\ell}{k! \ell!}.
\]
Furthermore we have
\begin{align*}
    \frac{k! \ell !}{k^{k} \ell^\ell e^{-k-\ell}} & \iint\limits_{[-\pi,\pi]^2}  
    e^{-c(k\varphi_1^2 + \ell \varphi_2^2)}
     O \left(  \frac{k^2}n |\varphi_1| + \frac{k\ell}n (|\varphi_1|+|\varphi_2|) + \frac{\ell^2}n |\varphi_2| 
    +  \frac{k^3+\ell^3}{n^2} + \frac{k+\ell}{n}  \right)  \\
    & = O\left( \frac{k^{3/2}}n + \frac{\ell k^{1/2}}n + \frac{\ell^{1/2} k}n + \frac{\ell^{3/2}}n 
    + \frac{k^3+\ell^3}{n^2} + \frac{k+\ell}{n}  \right) \\
    & = O\left( \frac{k^{3/2}}n + \frac{\ell^{3/2}}n  \right) 
\end{align*}
uniformly for $1\le k, \ell \le C \sqrt n$. This directly leads to 
\begin{align*}
     \E[(X_n)_k(Y_n)_\ell]
             = (nf_{x_1}(1,1))^k &  (nf_{x_2}(1,1))^\ell  
            e^{ \frac{k^2}{2n} \frac{f_{x_1x_1}(1,1)}{(f_{x_1}(1,1))^2} + \frac{k\ell}{n} \frac{f_{x_1x_2}(1,1)}
      {(f_{x_1}(1,1))(f_{x_2}(1,1))} + 
    \frac{\ell^2}{2n} \frac{f_{x_2x_2}(1,1)}{(f_{x_2}(1,1))^2}    } \\
    &\cdot \left( 1 + O\left( \frac{k^{3/2}}n + \frac{\ell^{3/2}}n  \right) \right)
\end{align*}
as proposed.     

\subsection{Proof of Lemma \ref{lem:S1}}

First, we distinguish maps according to the number of patterns that appear. Let $S_1$ be the set of maps with more than $\mu_n/2$ pattern occurrences and $S_2$ the set of maps with less pattern occurrences. Then the contribution of maps in $S_2$ with $k$ labelled patterns to $m_{n,k}$ is at most 
\[
    (\mu_n/2)^k|S_2|\leq (\mu_n/2)^km_n
\] 
since there are at most $(\mu_n/2)_k$ ways to mark $k$ patterns in a map of the set $S_2$.

Next, we have a closer look at the contribution of the maps in $S_1$. 

In this case an important observation is that a pattern occurrence of \map{p} in a map \map{m} can only intersect a constant number $c:= vf^2$ of other pattern occurrences, where $f$ is the number of interior faces of \map{p} and $v$ is the maximal face valency of the interior faces. This follows because a face $F$ in \map{m} can intersect with at most $f$ different faces of an occurrence of \map{p} and the intersection can be rotated in at most $v$ ways. Applying this estimate to all faces of a fixed pattern occurrence, we obtain the desired bound.

So, let \map{m} in $S_1$ with $p$ occurences of the pattern \map{p} and let $L(\map{m})$ be the set of maps consisting of the map \map{m} with $k$ labelled pattern occurrences. Since there are $(p)_k$ ways to label $k$ pattern occurrences the size of $L(\map{m})$ is exactly $(p)_k$.

However, the fraction of maps in $L(\map{m})$ with a labelled pattern occurrence that intersects with at least two other labelled occurrences is bounded by $O\left(\frac{1}{\sqrt{n}}\right)$.
To that end, let us estimate from above the number of ways to label $k$ patterns where at least one labelled occurrence intersects at least two other occurrences. 

Naturally there are at most $p$ choices for the pattern occurrence which intersects two others and $k$ choices for its label. By the observation above, there are at most $c^2$ choices for the two pattern occurrence that the labelled occurrence intersects and $(k-1)$ and $(k-2)$ choices for their labels respectively. The rest of the labelled pattern occurrences can be chosen freely and we obtain that there are in total at most
\[
    (pk)\cdot \left(c(k-1)\right)\cdot \left(c(k-2)\right)\cdot (p-3)_{(k-3)}
\]
ways to label $k$ pattern occurrences such that there exists at least one labelled occurrence that intersects two other labelled occurrences. Comparing this number to the size of $L(\map{m})$ yields
\[
    \frac{c^2k(k-1)(k-2)p(p-3)(p-4)\cdots (p-k+1)}{p(p-1)\cdots (p-k+1)} \leq \frac{c^2k^3}{(p-2)^2} \leq \frac{4c^2k^3}{(\mu_n-4)^2} \leq  C\frac{1}{\sqrt{n}}
\]
for a suitably large constant $C$ and all $n$ large enough, since $k = \Theta(\sqrt{n})$ and $\mu_n = \Theta(n)$ by Lemma \ref{th:exp_var}.

By using the rough estimates $|S_1| < m_n$ and $\sum_{\ell \geq \mu_n/2} (\ell)_k\, m_{n,\ell} < m_{n,k}^\circ$, the considerations above sum up to
\[
     m_{n,k}^\circ \leq m_{m,k}^{\circ,\times} + \left(\frac{\mu_n}{2}\right)^k m_{n} + O\left(\frac{m_{n,k}^\circ}{\sqrt{n}}\right) 
\]

\subsection{Functional equations and their solutions}\label{sec:quasi_power} 
In this section, we have a closer look at the functional equation systems (\ref{eq:gen_pats}) associated to the generating function of planar maps 
\[
    M(z,u,\pt{x}) = \sum_{n\geq 0} M_n(z,\pt{x})u^n
\]
where $z$ counts the number of edges, $u$ the root face valency and $\pt{x}$ is a vector of face counting variables. We consider the full system consisting of equation (\ref{eq:gen_pats}) and the formulas of Lemma \ref{lem:Yu} and Lemma \ref{lem:simple}
\begin{align*}
    M(z,u,\pt{x}) &= 1 + zu^2M(z,u,\pt{x})+zu\frac{uM(z,u,\pt{x})-M(z,1,\pt{x})}{u-1} \\
    &\qquad + (x_{t(0)}-1)\frac{r_0z}{u^{\ell_0-2}}P_{\ell_0-1}(z,u,\pt{x}) \nonumber\\
    &\qquad + \sum_{i=1}^I(x_{t(i)}-1)c_{t(i)}z^{e_{t(i)}+1} u^{2-\ell_{t(i)}}P_{\ell_{t(i)}-1}(z,u,\pt{x})\prod_{j\geq 1} S_j^{s_{t(i)j}}(z,\pt{x}) \nonumber\\
    P_0(z,u,\pt{x}) &= M(z,u,\pt{x}) \nonumber\\
    P_{\ell}(z,u,\pt{x}) &= M(z,u,\pt{x})-\sum_{k=0}^{\ell-1}M_k(z,\pt{x})u^k \\
    &\qquad - \sum_{k=0}^{\ell-1} P_{k}(z,u,\pt{x})u^{\ell-k}[u^{\ell-k}] M(z,u,\pt{x})^{k+1}, \quad \ell \geq 1 \nonumber\\ 
    S_1(z,\pt{x}) &= M_1(z,\pt{x})\\
    S_2(z,\pt{x}) &= M_2(z,\pt{x})-M_1(z,u)^2-z\\
    S_\ell(z,\pt{x}) &= M_\ell(z,\pt{x}) - \sum_{k=1}^{\ell-1}S_k(z,\pt{x})[u^{\ell-k}]M(z,u,\pt{x})^k - z[u^{\ell-2}]M(z,u,\pt{x})^2, \quad \ell \geq 3.
\end{align*}
Note that $[u^{\ell-k}] M(z,u,\pt{x})^{k+1}$ is a polynomial in $M_0(z,\pt{x}), M_1(z,\pt{x}), \dots M_{\ell-k}(z,\pt{x})$
and consequently, the functions $P_\ell(z,u,\pt{x})$ and $S_\ell(z,\pt{x})$ as well. Further, we only need to compute $P_\ell(z,u,\pt{x})$ and $S_\ell(z,\pt{x})$ up to $\ell < 2\ell_0-2$ since the boundary of two intersecting patterns is of length at most $2\ell_0-2$. 

The equations for these functions are also iterative such that we can directly express them in terms of the polynomials
\begin{align*}
    u^{1-\ell}P_\ell(z,u,\pt{x}) &= Q_\ell(z,u,M(z,u,\pt{x}),M_0(z,\pt{x}),\dots, M_\ell(z,\pt{x}))&\\
    S_\ell(z,\pt{x}) &= T_\ell(z,M_0(z,\pt{x}),\dots, M_\ell(z,\pt{x})).
\end{align*}

 In turn, the generating functions of $M_i(z,\pt{x})$ are generally only given implicitly. However, we refer to~\cite{DrPa} where it is proven that they are given by $M_i(z,\pt{x}) = G_{i}(z,\pt{x},M(z,1,\pt{x}))$, where $G_{i}(z,\pt{x},y)$ is an analytic function for $|z|<\frac{1}{10}$, $|y|<2$, $|\pt{x}-\pt{1}|<2^{1-i}$.  Plugging these functions into the resulting DDE, we can apply Theorem~4 in~\cite{DrmotaNoyYu} and conclude that $M(z,1,\pt{x})$ has a $3/2$-singularity at its radius of convergence $r(\pt{x})$. That is locally
\[
    M(z,1,\pt{x}) = H_1(z,\pt{x})+H_2(z,\pt{x})\left(1-\frac{z}{r(\pt{x})}\right)^{\frac{3}{2}},
\]
where $H_1(z,\pt{x})$ and $H_2(z,\pt{x})$ are analytic functions around $(z,\pt{x}) = (r(\pt{1}), \pt{1})$. In particular, equation~(\ref{eq:power}) holds for the probability generating function of the face counting random variables such that we can apply Lemma~\ref{LeMm}.

\subsubsection{A short note on the computation of the constants}~\\
At this point, we comment on the computation of the constants $f_{x_i}(\pt{1})$ appearing in Theorems \ref{thm:central}, Lemma \ref{th:exp_var} and Proposition \ref{prop:nonself}. The function $f(\pt{x})$ is defined as 
\[
    f(\pt{x})=\log(\rho(\pt{1}))-\log(\rho(\pt{x}))
\]
and we want to know the value of its partial derivatives at $\pt{x}=\pt{1}$. Since
\[
    f_{x_i}(\pt{x}) = -\frac{\rho_{x_i}(\pt{x})}{\rho(\pt{x})},\quad f_{x_ix_j}(\pt{x}) = -\frac{\rho_{x_ix_j}(\pt{x})}{\rho(\pt{x})}+\frac{\rho_{x_i}(\pt{x})\rho_{x_j}(\pt{x})}{\rho(\pt{x})^2}.
\]
our goal is to determine the partial derivatives of $\rho(\pt{x})$ at $\pt{1}$. 

First, we apply the method of Bousquet-Mélou and Jehanne~\cite{BMJ} to the equation
\begin{align*}
    M(z,u,\pt{x}) &= 1 + zu^2M(z,u,\pt{x})+zu\frac{uM(z,u,\pt{x})-M(z,1,\pt{x})}{u-1} \\
    &\quad + \sum_{i=0}^I(x_{t(i)}-1)c_{t(i)}z^{e_i}\prod_{j\in J_i} zT_j^{j_i}(...)Q_{b(i)-1}(...).
\end{align*}
where we have omitted the arguments $(z,u,\pt{x},M(z,u,\pt{x}),m_0(z,\pt{x}),\dots m_{2b(0)}(z,\pt{x}))$ to shorten notation. The method consists of expanding the single equation by two further equations: one being the partial derivative with respect to $M(z,u,\pt{x})$, the other with respect to $u$. Drmota, Noy and Yu~\cite{DrmotaNoyYu} further found that substituting $v:= u-1$ yields a positive strongly connected system which in our case is extended by the functions $m_\ell(z,\pt{x}), \ell = 0,1,\dots,2b(0)$. That is,
\begin{align}\label{eq:sys_full_wild}
    g &= 1 + z(v+1)^2g^2+z(v+1)\left(g + w\right)\\
    &\quad + \sum_{i=0}^I(x_{t(i)}-1)c_{t(i)}z^{e_i}\prod_{j\in J_i} zQ_{\ell(i)-1}(z,v+1,g,m_0,\dots, m_\ell)T_j^{j_i}(z,m_0,\dots, m_\ell) \nonumber\\
    v &= 2zv(v+1)^2g +z(v+1)^2\nonumber\\
    &\quad + \sum_{i=0}^I(x_{t(i)}-1)c_{t(i)}z^{e_i}\prod_{j\in J_i} zuQ_{\ell(i)-1, \hat{M}}(z,v+1,g,m_0,\dots, m_\ell)T_j^{j_i}(z,m_0,\dots, m_\ell)\nonumber\\
    w &= 2z(v+1)g^2+2z(v+1)^2gw+z\left(g + w\right)+z(v+1) w\nonumber\\
    &\quad + \sum_{i=0}^I(x_{t(i)}-1)c_{t(i)}z^{e_i}\prod_{j\in J_i} zQ_{\ell(i)-1,u}(z,v+1,g,m_0,\dots, m_\ell)T_j^{j_i}(z,m_0,\dots, m_\ell)\nonumber\\
    &\quad + \sum_{i=0}^I(x_{t(i)}-1)c(i)z^{e(i)}\prod_{j\in J_i} zwQ_{b(i)-1,\hat{M}}(z,v+1,g,m_0,\dots, m_\ell)T_j^{j_i}(z,m_0,\dots, m_\ell)\nonumber
\end{align}
with the additional equations
\begin{align*}
    m_0 &= 1  \nonumber \\
    m_1 &= z(g-wv) \nonumber \\
    m_\ell &= z\sum_{i=0}^{\ell-2}m_i,m_{i-2} + z\left(g-wv-\sum_{i=0}^{\ell-1}m_i\right) \nonumber \\
    &\quad +\sum_{i=0}^I(x_{t(i)}-1)c_{t(i)}z^{e_i}\prod_{j\in J_i} z G_{\ell,i}(z,g-uw)T_j^{j_i}(z,m_0,\dots, m_\ell), \quad 2\leq \ell \leq 2b(i).
     \nonumber
\end{align*}
This system has unique solutions $g(z,\pt{x}),w(z,\pt{x}),v(z,\pt{x}),m_0(z,\pt{x}),m_1(z,\pt{x}),\dots, m_\ell(z,\pt{x})$ with $g(z,\pt{x}) = \hat{M}(z,v(z,\pt{x}),\pt{x})$ and $w(z,\pt{x}) = \Delta \hat{M}(z,u(z,\pt{x}),\pt{x})$. Further, $\rho(\pt{x})$ is defined as the function along which the determinant of the Jacobian of the system with respect to $g,v,w,m_0,m_1,\dots, m_{2b(0)}$ is vanishes. So, we add another equation to the system,   
\[\det J = 0\]
where $J$ denotes the Jacobian of system~(\ref{eq:sys_full_wild}).
Denote the Jacobian of this extended system by $F(\rho,v,g,w,m_0,\dots, m_\ell,\pt{x})$. The matrix $F(\rho,u,g,w,m_0,\dots m_\ell,\pt{x})$ is invertible for $\pt{x}$ close to $\pt{1}$ by standard theory (see e.g. \cite{DrmotaTrees}), such that locally
\[
    \partial_{x_i}(\rho,u,g,w,m_0,\dots m_\ell, \pt{x})^T = F^{-1}\left(S_{x_i}(\rho,u,g,w,m_0,\dots m_\ell, \pt{x}), \partial_{x_i}(\det J(\rho,u,g,w,m_0,\dots m_\ell, \pt{x}))\right)^T
\]
by the implicit function theorem.
Since we can solve the system for $\pt{x}=\pt{1}$, we know the values of $\rho(\pt{1}), g(\pt{1}), w(\pt{1}), u(\pt{1}), m_i(\pt{1})$, which we can simply plug into the equation. An analogous computation yields the second partial derivatives.\\

This computation is slightly more complicated if $G_{\ell,i}(z,M(z,1,\pt{x}),\pt{x})$ is not given explicitly. However, we only need to know its value at $z=\rho(\pt{1})=\frac{1}{12}$, $\pt{x}=\pt{1}$ and the values of $\partial_{x_i}G_\ell(z,M(z,1,\pt{x}),\pt{x})$ at this point. So suppose that $b(i) = 3$ and we want to compute
\[
    G_{2,i}(z,M(z,1,\pt{x}),\pt{x}) = [u^{2+3-2}]P_{3-1}(z,u,\pt{x}).
\]
We know by Lemma~\ref{lem:Yu} that
\[
    P_2(z,u,\pt{x}) = M(z,u,\pt{x})\left(1-u^2m_2(z,\pt{x})-2um_1(z,\pt{x})+2u^2m_1(z,\pt{x})\right) - 1 +um_1(z,\pt{x})
\]
and therefore
\[
    G_{2,i}(z,M(z,1,\pt{1}),\pt{1}) = \left(m_3(z)-3m_2(z)m_1(z)+2m_1(z)^2\right).
\]
The functions $m_1(z),m_2(z),m_3(z)$ in turn can easily be computed from the original map equation of Tutte that we covered in Section~\ref{sec:asymp_normal} by expanding the first few equation like we did for the equation with extra parameters. In particular, their value at $z=\frac{1}{12}$ can easily be computed.

For the computation of $\partial_{x_j}G_{2,i}(z,M(z,1,\pt{1}),\pt{1})$, we consider the derivatives of the equations in the original system. Notice, that at $\pt{x}=\pt{1}$ they collapse to equations in functions which we already computed (like $m_i(z)$ and $G_{2,i}(z,M(z,1,\pt{1}),\pt{1})$). For example the derivative of the first equation at $\pt{x}=\pt{1}$ equals
\begin{align*}
    g_{x_j} &= 2z(v+1)v_{x_j}g^2+2z(v+1)^2gg_{x_j}+zv_{x_j}(g+w) + zv(g_{x_j}+w_{x_j}) \\
    &+\sum_{i=0}^Ic(i)z^{e(i)}\prod_{j\in J_i} zQ_{b(i)-1}(z,v+1,g,m_0,\dots, m_\ell)T_j^{j_i}(z,m_0,\dots, m_\ell)
\end{align*}
The values of the partial derivatives of the involved functions at $z=1/12$ and $\pt{x}=\pt{1}$ can thus again be computed straight from the system.

\subsection{Proof of Lemma \ref{th:exp_var}} 
Analogously to our example in Section \ref{sec:koalas} we compute the expectation and variance of pattern counting variables by counting maps with distinguished faces. We start with the expectation, where we count maps with one distinguished simple $\ell_0$-gon and $n-d_0$ edges. 

As we recalled earlier, the expectation of face counting variables is given by the formula~(\ref{eq:exp_var_uni}). Now, the number of maps on $n-d_0$ edges with one marked simple $\ell_0$-gon equals
\[
    m_{n-d_0,e_{t(0)} = m_{n-d_0}}\left(f_{x_{t(0)}}(\pt{1})(n-d_0)+g_{x_{t(0)}}(\pt{1}) + O\left(\frac{1}{n}\right)\right)
\]
and the expectation of $X_n$ is therefore given by the expansion in (\ref{eq:exp_coeffs})
\begin{align*}
    \E[X_n] &= r_0\frac{m_{n-d_0, e_{t(0)}}}{m_{n-d_0}}\cdot \frac{m_{n-d_0}}{m_n}\\
    &= r_0\left(f_{x_{t(0)}}(\pt{1})(n-d_0)+g_{x_{t(0)}}(\pt{1}) + O\left(\frac{1}{n}\right)\right)\frac{[z^{n-d_0}]M(z,1,\pt{1})}{[z^n]M(z,1,\pt{1})}\\
    &= \frac{r_0}{12^{d_0}}\left(f_{x_{t(0)}}(\pt{1})(n-d_0)+g_{x_{t(0)}}(\pt{1}) + O\left(\frac{1}{n}\right)\right)\left(1+\frac{5d_0}{2n}+ O\left(\frac{1}{n^2}\right)\right)\\
    &= \frac{r_0}{12^{d_0}} f_{x_{t(0)}}(\pt{1})\left( n+\frac{3d_0}{2}+\frac{g_{x_{t(0)}}(\pt{1})}{f_{x_{t(0)}}(\pt{1})} + O\left(\frac{1}{n}\right)\right).
\end{align*}
Note that the factor $r_0$ appears because each rotation of the pattern in a map reduces to the same map with a distinguished simple $\ell_0$-gon.

We continue with the second factorial moment where we have to take into account all intersection types. First we count maps with either two labelled simple $\ell_0$-gons. There are $r_0^2$ ways to reinsert the $2d_0$ interior edges of the koalas. Thus, we count $r_0^2 m_{n-2d_0,2,0,0,0,0,0,0,0}$ maps in this case.
Then, we also count maps where we select a single face associated to an intersection type $i$. They are weighted by $r_i/c_{t(i)}$ by Lemma~\ref{prop:fact}.  
At the end, we label the two pattern occurrences again and gain a factor of $2!$. Naturally this translates into the equation
\begin{align*}
    \E[(X_n)_2] &= 2!\left(r_0^2\,\frac{m_{n-2d_0,2e_{t(0)}}}{m_{n-2d_0}}\,\frac{m_{n-2d_0}}{m_n} +\sum_{i=1}^I  \frac{r_i}{c_{t(i)}}\frac{m_{n-d_i,e_{t(i)}}}{m_{n-d_i}}\frac{m_{n-d_i}}{m_n} \right)\\
    &= 2!\left(r_0^2\,\E\left[Y^{(t(0))}_{n-2d_0}\left(Y^{(t(0))}_{n-2d_0}-1\right)\right] \,\frac{m_{n-2d_0}}{m_n} +\sum_{i=1}^I  \frac{r_i}{c_{t(i)}}\, \E\left[Y_{n-d_i}^{(t(i))}\right]\frac{m_{n-d_i}}{m_n} \right),
\end{align*}
where $\left(Y_n^{(1)},\dots, Y_n^{(J)}\right)$ is the random vector counting the number of faces which are marked by the variables $x_1, x_2,\dots, x_J$ respectively in a uniformly random planar map with $n$ edges.
Again, we use (\ref{eq:exp_var_uni}) and (\ref{eq:exp_coeffs}) such that we obtain
\begin{align*}
    \E[(X_n)_2] &= r_0^2f_{x_{t(0)}}(\pt{1})^2 (n-2d_0)^2\frac{m_{n-2d_0}}{m_n} \\
    &\quad+r_0^2\Big(\left(f_{x_{t(0)}x_{t(0)}}(\pt{1})  +  2f_{x_{t(0)}}(\pt{1})g_{x_{t(0)}}(\pt{1})\right)n+ O(1)\Big)\,\frac{m_{n-2d_0}}{m_n}\\
    &\quad+2\,\sum_{i=1}^I \left(\frac{r_i}{c_{t(i)}}f_{x_{t(i)}}(\pt{1})n + O\left(1\right)\right)\,\frac{m_{n-d_i}}{m_n}\\
    &= \frac{r_0^2f_{x_{t(0)}}(\pt{1})^2}{12^{2d_0}} (n-2d_0)^2\left(1+\frac{5d_0}{n} + O\left(\frac{1}{n^2}\right)\right) \\
    &\quad +\frac{r_0^2}{12^{2d_0}} \left(f_{x_{t(0)}x_{t(0)}}(\pt{1})+2f_{x_{t(0)}}(\pt{1})g_{x_{t(0)}}(\pt{1})\right)\Big(n+ O\left(1\right)\Big)\\
    &\quad +\sum_{i=1}^I \frac{2r_i}{12^{d_i}c_{t(i)}}f_{x_{t(i)}}(\pt{1})\;n+ O\left(1\right)
    \\
    &= \frac{r_0^2f_{x_{t(0)}}(\pt{1})^2}{12^{2d_0}} \,n^2 +\frac{r_0^2 \left(f_{x_{t(0)}x_{t(0)}}(\pt{1})+2f_{x_{t(0)}}(\pt{1})g_{x_{t(0)}}(\pt{1})+d_0f_{x_{t(0)}}(\pt{1})^2\right)}{12^{2d_0}}\,n\\
    &\quad +\sum_{i=1}^I \frac{2r_if_{x_{t(i)}}(\pt{1})}{12^{d_i}c_{t(i)}}\;n+O\left(1\right).
\end{align*}
Now, it is straight forward to compute the variance by
\begin{align*}
    \mathbb{V}[X_n] &= \E[X_n(X_n-1)]+\E[X_n] -\E[X_n]^2\\
    &= \frac{r_0^2f_{x_{t(0)}}(\pt{1})^2}{12^{2d_0}} \,n^2 +\frac{r_0^2 \left(f_{x_{t(0)}x_{t(0)}}(\pt{1})+2f_{x_{t(0)}}(\pt{1})g_{x_{t(0)}}(\pt{1})+d_0f_{x_{t(0)}}(\pt{1})^2\right)}{12^{2d_0}}\,n\\
    &\quad +\sum_{i=1}^I \frac{2r_if_{x_{t(i)}}(\pt{1})}{12^{d_i}c_{t(i)}}\;n + \frac{r_0f_{x_{t(0)}}(\pt{1})}{12^{d_0}}  \;n \\
    &\quad- \frac{r_0^2f_{x_{t(0)}}(\pt{1})^2}{12^{2d_0}} \left( n+\frac{3d_0}{2}+\frac{g_{x_{t(0)}}(\pt{1})}{f_{x_{t(0)}}(\pt{1})} + O\left(\frac{1}{n}\right)\right)^2\\
    &= \frac{1}{12^{2d_0}}\left(r_0^2f_{x_{t(0)}x_{t(0)}}(\pt{1})-2d_0r_0^2f_{x_{t(0)}}(\pt{1})^2 +12^{d_0}r_0f_{x_{t(0)}}(\pt{1})\right)n\\
    &\quad+\sum_{i=1}^I \frac{2r_if_{x_{t(i)}}(\pt{1})}{12^{d_i-2d_0}c_{t(i)}}\;n+O\left(1\right).
\end{align*}

\subsection{Proof of Theorem \ref{thm:central}}\label{sec:proofcent}
Finally, we prove the main Theorem~\ref{thm:central}.

Analogously to Section \ref{sec:koalas} we compute the $k$-th factorial moment, where $k = O(\sqrt{n})$ and prove  asymptotic estimates of the same form as in Theorem~\ref{thm:gao_wormald} such that we can derive our main result as a direct consequence.

Recall that the $k$-th factorial moment is the number of maps on $n$ edges with $k$ selected pattern occurrences which are labelled from $1$ to $k$ divided by the total number of maps on $n$ edges. By Lemmas~\ref{lem:S1} and \ref{th:exp_var}, we may further restrict ourselves to count maps on $n$ edges with $k$ selected and labelled pattern occurrences where each selected pattern intersects with at most one other selected pattern occurrence. Hence, 
\begin{equation*}
    \E\left[(X_n)_k\right] = \frac{m_{n,k}^\circ}{m_n} \sim  \frac{m_{n,k}^{\circ,\times}}{m_n} 
\end{equation*}
where $m_{n,k}^\circ$ is the number of maps with $n$ edges and $k$ labelled pattern occurrence (among arbitrary many occurrences) and $m_{n,k}^{\circ,\times}$ is the subset of all such maps where each labelled pattern occurrence intersects at most one other labelled pattern occurrence.

For a map in $m_{n,k}^{\circ,\times}$, let $p_i$ be the the number of pairs of intersecting labelled pattern occurrences with (rooted) intersection type $i$ and let $p_0$ be the number of labelled pattern occurrences which do not intersect with any other. Of course it has to hold $p_0+\sum 2p_i = k$. 

Now instead of counting maps with $n$ edges with $p_0$ single labelled patterns and $p_i$ intersecting pairs of labelled patterns with intersection type $i$, $1\leq i\leq I$, we unlabel the patterns and delete the $d_i$ edges according to the rules we set for intersection type $i$. Thus, we end up with a map on 
\[
    n-D := n-p_0d_0 - \sum p_id_i
\]
edges and $p_0+\sum p_i$ distinguished faces. We already have counting variables for these faces in our generating function defined in~(\ref{eq:gen_pats}). However, several intersection types might be associated to the same face. So for each $j\in [J]$, we have 
\[
    P_j := \sum_{i\in t^{-1}(j)} p_{i}
\]
distinguished faces counted by the variable $x_{j}$.
In turn, we know that the number of maps with $n-D$ edges and $(P_1,P_2,\dots,P_J)$ distinguished faces is given by
\begin{align*}
    m_{n-D, P_1,P_2,\dots,P_J} = \frac{m_{n-D}}{P_1!P_2!\cdots P_T!}\E \left[ Y_{n-D}^{(P_1)}Y_{n-D}^{(P_2)}\cdots Y_{n-D}^{(P_J)}\right]
\end{align*}
and we can determine $\E \left[ Y_{n-D}^{(P_1)}Y_{n-D}^{(P_2)}\cdots Y_{n-D}^{(P_J)}\right]$ by Lemma~\ref{LeMm}.

Before we go into the details of this further computation, we establish how many maps with $k$ selected and labelled patterns with $(p_0,p_1,p_2,\dots,p_I)$ intersection types will reduce to this map with $n-D$ edges and $(P_1,P_2,\dots, P_J)$ selected faces. First of all, if for $j\in J$, the preimage of $j$,
\[
    t^{-1}(j) = \{i_1,i_2,\dots,i_{j}\}
\]
is not a single integer, then we partition the  $P_j$ selected faces into $p_{i_1},\dots,p_{i_j}$ faces and attribute them to the the intersection types $i_1, i_2, \dots, i_j$ respectively. Again, we gain a factor of $\left(\frac{r_i}{c_{t(i)}}\right)^{p_i}$ for each intersection type according to Lemma~\ref{prop:fact}. In case $0 \in t^{-1}(j)$, then a factor $r_0$ for each rotation of a single pattern appears. That is, in total we get the factor
\[
    r_0^{p_0}\;\left(\frac{r_i}{c_{t(i)}}\right)^{p_i} \binom{P_j}{p_{i_1},p_{i_2},\dots,p_{i_j}} = \frac{r_0^{p_0}}{p_0!} \; \left(\prod_{i= 1}^{I} \frac{r_i^{p_i}}{c_{t(i)}^{p_i}p_i!}\right)\left(\prod_{j=1}^{J} P_j!\right).
\]

Finally, let us not forget that we have to relabel the $k$ selected patterns such that we get in total

\begin{align*}
    \frac{m_{n,k}^{\circ,\times}}{m_n} &= k!\;\sum_{p_0+2\sum p_i = k} \frac{r_0^{p_0}}{p_0!} \; \prod_{i=1}^{I} \frac{r_i^{p_i}}{c_{t(i)}^{p_i}p_i!} \prod_{j=1}^{J} P_j!\;\frac{m_{n-D, P_1, P_2,\dots,P_j}}{m_n}\\
    &= k!\;\sum_{p_0+2\sum p_i = k} \frac{r_0^{p_0}}{p_0!} \prod_{i=1}^{I} \frac{r_i^{p_i}}{c_{t(i)}^{p_i}p_i!} \;\frac{m_{n-D}}{m_n} \;\E \left[ Y_{n-D}^{(P_1)}Y_{n-D}^{(P_2)}\cdots Y_{n-D}^{(P_J)}\right].
\end{align*}
Now we restructure the sum a bit by writing $p_0 = k-2\,\sum_{i=1}^I p_i$  
\begin{align*}
    \frac{m_{n,k}^{\circ,\times}}{m_n} &= k!\sum_{P = 0}^{\lfloor k/2 \rfloor} \sum_{\sum p_i = P}
    \frac{r^{k-2P}}{(k-2P)!} \prod_{i=1}^{I} \frac{r_i^{p_i}}{c_{t(i)}^{p_i}p_i!}
    \;\frac{m_{n-D}}{m_n} \;\E \left[ Y_{n-D}^{(P_1)}Y_{n-D}^{(P_2)}\cdots Y_{n-D}^{(P_J)}\right]
\end{align*}
and then we apply Lemma~\ref{LeMm} to obtain
\begin{align*}
    \frac{m_{n,k}^{\circ,\times}}{m_n} &= k!\sum_{P = 0}^{\lfloor k/2 \rfloor} \sum_{\sum p_i = P}
    \frac{m_{n-D}}{m_n} \frac{\left(r_0f_{x_{t(0)}}(\pt{1})\right)^{k-2P}}{(k-2P)!} \prod_{i=1}^{I} \frac{1}{p_i!}\left(\frac{r_if_{x_{t(i)}}(\pt{1})}{c_{t(i)}}\right)^{p_i}\\
    &\hspace{30mm}\cdot 
     (n-D)^{k-P}\exp\left(\left(\frac{1}{2(n-D)} \left<\bar{\pt{p}},\Sigma  \bar{\pt{p}} \right>\right)\right) \left(1+O\left(\frac{D}{n}\right)\right)
\end{align*}
where  $\bar{\pt{p}} = (k-2P,p_1,p_2,\dots p_T)^T$ and $(\Sigma)_{i+1,j+1} = \frac{f_{x_{t(i)}x_{t(j)}}(\pt{1})}{f_{x_{t(i)}}(\pt{1})f_{x_{t(j)}}(\pt{1})}$. 
Note that Lemma~\ref{LeMm} actually gives an expression with $\left<\pt{v}, \bar \Sigma \pt{v}\right>$ in the exponential function, where $v=(v_1,v_2,\dots v_\ell)$ and $v_i$ is the sum of all $p_j$ with $t(j) = i$ while $(\bar \Sigma)_{i,j} = \frac{f_{x_ix_j}(\pt{1})}{f_{x_i}(\pt{1})f_{x_j}(\pt{1})}$. However it is easy to see by elementary computations that we can expand this expression to $\left<\bar{\pt{p}},\Sigma  \bar{\pt{p}} \right>$. 

Next, we apply Lemma~\ref{le:frac_map} and obtain
\begin{align*}
    \frac{m_{n,k}^{\circ,\times}}{m_n} 
    &= \frac{k!}{12^D}\sum_{P = 0}^{\lfloor k/2 \rfloor} \sum_{\sum p_i = P}
    \frac{\left(r_0f_{x_{t(0)}}(\pt{1})\right)^{k-2P}}{(k-2P)!} \prod_{i=1}^{I} \frac{1}{p_i!}\left(\frac{r_if_{x_{t(i)}}(\pt{1})}{c_{t(i)}}\right)^{p_i}\\
    &\hspace{30mm}\cdot 
     (n-D)^{k-P}\exp\left(\left(\frac{1}{2(n-D)} \left<\bar{\pt{p}},\Sigma  \bar{\pt{p}} \right>\right)\right) \left(1+O\left(\frac{D}{n}\right)\right).
\end{align*}

Now that we have all asymptotic estimates of the individal terms, we aim to identify which ones drive the asymptotic growth of the sum. As we expect asymptotics of the form in Theorem~\ref{thm:gao_wormald}, we factor out the main asymptotic term of $\mu_n^k$ which has already appeared in the sum. Further, we distribute the factors of $(n-D)^{-P}$ among the individal $p_i$'s. That is,
\begin{align*}
    \frac{m_{n,k}^{\circ,\times}}{m_n} &= \left(\frac{r_0}{12^{d_0}}f_{x_{t(0)}}(\pt{1})n\right)^{k} \\
    &\quad \cdot \sum_{P = 0}^{\lfloor k/2 \rfloor} \sum_{\sum p_i = P}
    \frac{k!}{12^{D-kd_0}(k-2P)!}\left(1-\frac{D}{n}\right)^k\prod_{i=1}^I\frac{1}{p_i!}\left(\frac{r_if_{x_{t(i)}}(\pt{1})}{r_0^2c_{t(i)}(n-D)f_{x_{t(0)}}(\pt{1})^2}\right)^{p_i} \\
    &\hspace{27mm}\cdot 
    \exp \left( \left(\frac{1}{2(n-D)} \left<\bar{\pt{p}},\Sigma  \bar{\pt{p}} \right>\right)\right)\left(1+O\left(\frac{D}{n}\right)\right).
\end{align*}
The next step is analogous to our computations in Section~\ref{sec:koalas} where we rewrite 
\begin{align*}
    \left<\bar{\pt{p}},\Sigma \bar{\pt{p}}\right> 
    = k^2\frac{f_{x_{t(0)}x_{t(0)}}(\pt{1})}{f_{x_{t(0)}}(\pt{1})f_{x_{t(0)}}(\pt{1})}  + \sum_{i=1}^I q_i p_i.
\end{align*}
Recall that $q_i$ is linear in $P$ and $k$ (without mixed terms) according to this calculation. Hence,
\[
    \exp\left(\frac{1}{2(n-D)} \sum_{i=1}^I q_i p_i\right) = \prod_{i=1}^I\left(1+O\left(\frac{k}{n}\right)\right)^{p_i}. 
\]
Further we factor out more terms independent of any $p_i$'s and since $D = O(k)$, the above simplifies to
\begin{align*}
    \frac{m_{n,k}^{\circ,\times}}{m_n} &= \left(\frac{r_0}{12^{d_0}}f_{x_{t(0)}}(\pt{1})n\right)^{k}\exp\left(\frac{k^2}{2n} \frac{f_{x_{t(0)}x_{t(0)}}(\pt{1})}{f_{x_{t(0)}}(\pt{1})f_{x_{t(0)}}(\pt{1})}\right) \\
    &\qquad \cdot \sum_{P = 0}^{\lfloor k/2 \rfloor} \sum_{\sum p_i = P}\Bigg(
    \frac{k!}{12^{D-kd_0}(k-2P)!}\left(1-\frac{D}{n}\right)^k\\
    &\hspace{35mm}\cdot \prod_{i=1}^I\frac{1}{p_i!}\left(\frac{r_if_{x_{t(i)}}(\pt{1})}{r_0^2c_{t(i)}nf_{x_{t(0)}}(\pt{1})^2}\left(1+O\left(\frac{k}{n}\right)\right)\right)^{p_i}\Bigg).
\end{align*}

By now, we can see a pattern emerge reminiscent of the product of several exponential sums. 
We distinguish between the terms where $P\le k^{1/3}$, and those where $P > k^{1/3}$.
In the first case ($P\le k^{1/3}$) we have
$
     \frac{k!}{(k-2P)!}= k^{2P}\left(1+O\left(\frac{1}{k^{1/3}}\right)\right)
$
and (with $D = p_0 d_0 + \sum d_i p_i = d_0(k-2P) + \sum d_i p_i$) 
\[
  \left( 1 - \frac Dn  \right)^k  =  \left(1-\frac{d_0(k-2P)+\sum_{i=1}^m p_id_i}{n}\right)^k = \exp\left(-\frac{d_0k^2}{n}\right)\left(1+O\left(\frac{1}{k^{2/3}}\right)\right)
\] 
whereas in the remaining cases we use the trivial upper bounds $\frac{k!}{(k-2P)!}\le k^{2P}$
and 
\[
   \left( 1 - \frac Dn  \right)^k  =   \left(1-\frac{d_0(k-2P)+\sum_{i=1}^m p_id_i}{n}\right)^k \le 1.
\] 
In particular for $P > k^{1/3}$ we observe that
\begin{align*}
  & \sum_{P > k^{1/3}} \sum_{\sum p_i = P} \Bigg(
    \frac{k!}{12^{D-kd_0}(k-2P)!}\left(1-\frac{D}{n}\right)^k \prod_{i=1}^I\frac{1}{p_i!}\left(\frac{r_if_{x_{t(i)}}(\pt{1})}{r_0^2c_{t(i)}nf_{x_{t(0)}}(\pt{1})^2}\left(1+O\left(\frac{k}{n}\right)\right)\right)^{p_i}\Bigg) \\
&\le \sum_{P > k^{1/3}} \sum_{\sum p_i = P} \frac 1{p_i!}
  \left(\frac{k^2}{n}\frac{12^{d_i-2d_0}r_if_{x_{t(i)}}(\pt{1})}{r_0^2c_{t(i)}nf_{x_{t(0)}}(\pt{1})^2}\left(1+O\left(\frac{1}{k}\right)\right)\right)^{p_i} \\
&\le \sum_{P > k^{1/3}} \frac 1{P!} 
 \left( \frac{k^2}n \sum_{i=1}^I \frac{12^{d_i-2d_0}r_if_{x_{t(i)}}(\pt{1})}{r_0^2c_{t(i)}nf_{x_{t(0)}}(\pt{1})^2}\left(1+O\left(\frac{1}{k}\right)\right)  \right)^P \\
 &\le \sum_{P > k^{1/3}} \frac {A^P}{P!} = o(1)
\end{align*}
for a proper constant $A > 0$.
Hence, we finally obtain 
\begin{align*}
    \frac{m_{n,k}^{\circ,\times}}{m_n} &\sim \left(\frac{r_0}{12^{d_0}}f_{x_{t(0)}}(\pt{1})n\right)^{k}\exp\left(\frac{k^2}{2n}\left( \frac{f_{x_{t(0)}x_{t(0)}}(\pt{1})}{f_{x_{t(0)}}(\pt{1})f_{x_{t(0)}}(\pt{1})} - 2d_0\right)\right)\left(1+O\left(\frac{1}{k^{2/3}}\right)\right) \\
    &\qquad \cdot \sum_{P \le k^{1/3}}  \sum_{\sum p_i = P}
    \prod_{i=1}^I\frac{1}{p_i!}\left(\frac{k^2}{n}\frac{12^{d_i-2d_0}r_if_{x_{t(i)}}(\pt{1})}{r_0^2c_{t(i)}nf_{x_{t(0)}}(\pt{1})^2}\left(1+O\left(\frac{1}{k^{1/3}}\right)\right)\right)^{p_i}\\ 
    &\sim \left(\frac{r_0}{12^{d_0}}f_{x_{t(0)}}(\pt{1})n\right)^{k}\exp\left(\frac{k^2}{2n}\left( \frac{f_{x_{t(0)}x_{t(0)}}(\pt{1})}{f_{x_{t(0)}}(\pt{1})^2} - 2d_0+
    2\sum_{i=1}^I\frac{12^{d_i-2d_0}r_if_{x_{t(i)}}(\pt{1})}{r_0^2c_{t(i)}f_{x_{t(0)}}(\pt{1})^2}\right)\right)\\
    &\sim (\mu_n)^k\exp\left(\frac{k^2}{2n}\left( \frac{f_{x_{t(0)}x_{t(0)}}(\pt{1})}{f_{x_{t(0)}}(\pt{1})^2} - 2d_0+
    2\sum_{i=1}^I\frac{12^{d_i-2d_0}r_if_{x_{t(i)}}(\pt{1})}{r_0^2c_{t(i)}f_{x_{t(0)}}(\pt{1})^2}\right)\right)
\end{align*}
which is the desired form.

Now by Lemma \ref{lem:S1} we can conclude that
\[
    \E[(X_n)_k] \sim \frac{m_{n,k}^{\circ,\times}}{m_{n}}
\]
and indeed, by Lemma \ref{th:exp_var}, the expression in the exponential function equals exactly the desired 

\[
    \frac{k^2}2 \frac{ \sigma_n^2 - \mu_n}{\mu_n^2} \sim \frac{k^2}{2n}\left( \frac{f_{x_{t(0)}x_{t(0)}}(\pt{1})}{f_{x_{t(0)}}(\pt{1})^2} - 2d_0+
    2\sum_{i=1}^I\frac{12^{d_i-2d_0}r_if_{x_{t(i)}}(\pt{1})}{r_0^2c_{t(i)}f_{x_{t(0)}}(\pt{1})^2}\right)
\]
such that 
\[
    \E[(X_n)_k] \sim \mu_n^k \exp\left( \frac{k^2}2 \frac{ \sigma_n^2 - \mu_n}{\mu_n^2}  \right) 
\]
and 
Theorem \ref{thm:gao_wormald} finally yields Theorem \ref{thm:central}.

\section{Extensions to other map families and open questions}

There are several sub-classes of planar maps that have been discussed in the literature, 
for example $2$- or $3$-connected planar maps, bipartite or Eulerian planar maps, loopless or simple planar maps, 
triangulations or $3$-regular planar maps, quadrangulations or $4$-regular planar maps etc. 
It is therefore a natural question whether
Theorem~\ref{thm:central} can be extended to these classes of planar maps, in particular
by applying a corresponding approach.

The main ingrediences of the proof of Theorem~\ref{thm:central} are the following 
two properties. The first one is a combinatorial one, the second is an analytic one.
Let $\mathcal{M}$ denote a class of planar maps that we want to consider:
\begin{enumerate}
\item Suppose that ${\bf p}\in \mathcal{M}$ is a pattern with $\ell$ interior edges and outer face valency $r$,
and ${\bf m}^{({\bf p})}$ is a map in $\mathcal{M}$ with $n$ edges, where
the occurrence of the pattern ${\bf p}$ is selected. If we delete the inner edges of the selected pattern
then we obtain a planar map in $\mathcal{M}$ with $n-\ell$ edges, where a face of valency $r$ is selected.
Furthermore this property generalizes to several selected patterns.
\item Let $Y_n$ denote the (random) number of simple $r$-gons in a random planar map of size $n$ in $\mathcal{M}$,
where we assume that every planar map of size $n$ is equally likely. Then the probability generating function
$\mathbb{E}[x^{Y_n}]$ satisfies the property (\ref{eq:power}) for complex $x$ sufficiently close to $1$.
Furthermore this property generalizes to the random vector that counts faces of several shapes.
\end{enumerate}
If these two properties are satisfied then (by the first property) 
we can relate the factorial moments of the random variable $X_n$ 
that counts the number of occurrences of a (given) pattern ${\bf p}$ to factorial moments of $Y_n$ and 
(by the second property) we get asymptotics for the factorial moments of $Y_n$ (by Lemma~\ref{LeMm}).
Putting everything together we get corresponding asymptotics for the factorial moments of $X_n$
and derive asymptotic normality of $X_n$.

The first property is certainly satisfied for $2$-connected, bipartite, loopless, and simple planar maps.
However, it is not satisfied for Eulerian, $3$-connected, triangulations, quadrangulations or for
$3$- or $4$-regular planar maps.

In order to check the second property we can use the method from \cite{DrPa} (see also \cite{Yu}).
In this work, a proper generalization of the quadratic method is applied in order to obtain asymptotics
for $\mathbb{E}[x^{Y_n}]$ of the form (\ref{eq:power}) which implies then directly a central limit
theorem for $Y_n$ in the case of all planar maps as well as for $2$-connected planar maps.
In \cite{Yu} this method was generalized to simple planar maps. It is an easy exercise to generalize
this method to bipartite and to loopless planar maps. Furthermore it also extends to several 
face valencies. 

Summing up we can expect a central limit theorem for pattern counts in $2$-connected, bipartite, loopless, and simple planar maps
by applying the same approach as above.

Another extension of the result concerns submaps that leave a single face (not necessarily with simple boundary) if one deletes several interior edges. An example would be the submap of two intersecting koalas in intersection type 2 in Figure \ref{fig:intersects}. Naturally, the method directly extends to these situations. However, they are not patterns according to Definition 3.

Finally we want to mention one further potential extension.
Suppose that ${\bf p}_1, \ldots {\bf p}_d$ are $d$ different patterns. Then it is natural
to consider the random vector ${\bf Y}_n = (Y_n^{(1)}, \ldots, Y_n^{(d)})$, where $Y_n^{(j)}$ counts the
number of occurrences of the patterns ${\bf p}_j$ in planar maps of size $n$. 
It is clearly expected that ${\bf Y}_n$ satisfies a multivariate central limit theorem 
(similar to face valency counts). And actually we could follow the same lines as above since 
both properties are again satisfied. However, in order to derive a central limit theorem for ${\bf Y}_n$
we need a multivariate extension of Theorem~\ref{thm:gao_wormald} by Gao and Wormald \cite{GaoWormald}
that is actually provided independently by Ojeda, Holmgren, and Janson\cite{ojeda2023fringe} and by Hitczenko and Wormald \cite{hitczenko2023multivariate}. This extension is quite feasible 
using the approach of \cite{GaoWormald} but a bit complicated. It should be straight forward but there are covariance conditions that have to be verified. 


\bibliography{Bib_maps.bib}{}

\begin{thebibliography}{10}

\bibitem{BenderGaoRichmond}
E.A. {Bender}, Z.C. {Gao}, and L.B. {Richmond}.
\newblock Submaps of maps {I}: General 0-1 laws.
\newblock {\em J.Combin.Theory Ser.B}, 55:104--117, 1992.

\bibitem{BliemKous}
T.~{Bliem} and S.~{Kousidis}.
\newblock The number of flags in finite vector spaces: Asymptotic normality and
  mahonian statistics.
\newblock {\em Journal of Algebraic Combinatorics}, 37, 09 2011.

\bibitem{BMJ}
M.~{Bousquet-Mélou} and A.~{Jehanne}.
\newblock Polynomial equations with one catalytic variable, algebraic series
  and map enumeration.
\newblock {\em Journal of Combinatorial Theory, Series B}, 96:623--672, 2022.

\bibitem{DrmotaTrees}
M.~{Drmota}.
\newblock {\em Random Trees}.
\newblock Springer Vienna, 2008.

\bibitem{DrmotaNoyYu}
M.~{Drmota}, M.~{Noy}, and G.-R. {Yu}.
\newblock Universal singular exponents in catalytic variable equations.
\newblock {\em Journal of Combinatorial Theory, Series A}, 185:105522, 2022.

\bibitem{DrPa}
M.~{Drmota} and K.~{Panagiotou}.
\newblock A central limit theorem for the number of degree-k vertices in random
  maps.
\newblock {\em Algorithmica}, 66(4):741--761, 2013.

\bibitem{DrmotaStufler}
M.~{Drmota} and B.~{Stufler}.
\newblock Pattern occurrences in random planar maps.
\newblock {\em Statistics \& Probability Letters}, 158, 2020.

\bibitem{Schaeffer}
M.~{Bona} (Ed.).
\newblock {\em Handbook of Enumerative Combinatorics}, chapter~5.
\newblock CRC Press, 2015.

\bibitem{GaoWang}
J.~{Gao} and W.~{Qiang}.
\newblock A probabilistic approach to value sets of polynomials over finite
  fields.
\newblock {\em Finite Fields and Their Applications}, 33, 07 2014.

\bibitem{GaoWormald2}
Z.~{Gao} and N.~{Wormald}.
\newblock Sharp concentration of the number of submaps in random planar
  triangulations.
\newblock {\em Combinatorica}, 23:467--486, 2003.

\bibitem{GaoWormald}
Z.~{Gao} and N.~{Wormald}.
\newblock Asymptotic normality determined by high moments, and submap counts of
  random maps.
\newblock {\em Probab. Theory Relat. Fields}, 130:368–376, 2004.

\bibitem{hitczenko2023multivariate}
Pawel HItczenko and Nick Wormald.
\newblock Multivariate asymptotic normality determined by high moments, 2023.

\bibitem{Hwang}
H.-K. {Hwang}.
\newblock On convergence rates in the central limit theorems for combinatorial
  structures.
\newblock {\em European Journal of Combinatorics}, 19(3):329--343, 1998.

\bibitem{ThevJan}
S.~{Janson} and P.~{Thévenin}.
\newblock Central limit theorem for components in meandric systems through high
  moments.
\newblock {\em arXiv:2303.01900}, 2023.

\bibitem{ojeda2023fringe}
Gabriel~Berzunza Ojeda, Cecilia Holmgren, and Svante Janson.
\newblock Fringe trees for random trees with given vertex degrees, 2023.

\bibitem{RichmondRobinsonWormald}
L.B. {Richmond}, R.W. {Robinson}, and N.~{Wormald}.
\newblock On hamilton cycles in 3-connected cubic maps.
\newblock {\em Ann.Dis.Math}, 27:141--150, 1985.

\bibitem{RichmondWormald}
L.B. {Richmond} and N.~{Wormald}.
\newblock Random triangulations of the plane.
\newblock {\em Eur.J.Combinatorics}, 9:61--71, 1988.

\bibitem{tutte_1963}
W.T. {Tutte}.
\newblock A census of planar maps.
\newblock {\em Canadian Journal of Mathematics}, 15:249–271, 1963.

\bibitem{Yu}
G.-R. {Yu}.
\newblock Pattern occurrences in random planar maps and catalytic functional
  equations.
\newblock {\em Dissertation, TU Wien}, 2019.

\end{thebibliography}
\bibliographystyle{plain}
\end{document}